\newtheorem{lemma}{Lemma}[section]
\newtheorem{prop}[lemma]{Proposition}
\newtheorem{thm}[lemma]{Theorem}
\newtheorem{cor}[lemma]{Corollary}
\theoremstyle{definition}
\newtheorem{defn}[lemma]{Definition}
\newtheorem{ex}[lemma]{Example}
\newtheorem{rem}[lemma]{Remark}
\newtheorem{assump}[lemma]{Assumption}
\newcommand{\Mod}{\mathrm{Mod}}
\newcommand{\End}{\mathrm{End}}
\newcommand{\Hom}{\mathrm{Hom}}
\newcommand{\Aut}{\mathrm{Aut}}
\newcommand{\Iso}{\mathrm{Iso}}
\newcommand{\Spec}{\mathrm{Spec}}
\newcommand{\MMN}{\mathcal{MM}_{\rm Nori}}
\newcommand{\Ah}{\mathcal{A}}
\newcommand{\Ch}{\mathcal{C}}
\newcommand{\isom}{\cong}
\newcommand{\id}{\mathrm{id}}
\newcommand{\colim}{\mathrm{colim}}
\newcommand{\ohne}{\smallsetminus}
\newcommand{\tensor}{\otimes}
\newcommand{\cone}{\mathrm{Cone}}
\newcommand{\tot}{\mathrm{Tot}}
\newcommand{\mot}{\mathrm{mot}}
\newcommand{\eff}{\mathrm{eff}}
\newcommand{\Rmod}{\text{$R$-Mod}}
\newcommand{\Qmod}{\text{$\Q$-Mod}}
\newcommand{\Rfree}{\text{$R$-Proj}}
\newcommand{\Nori}{\mathrm{Nori}}
\newcommand{\ZVar}{\Z[\mathrm{Var}]}
\newcommand{\ZAff}{\Z[\mathrm{Aff}]}
\newcommand{\dR}{\mathrm{dR}}
\newcommand{\ev}{\mathrm{ev}}
\newcommand{\Q}{\mathbb{Q}}
\newcommand{\Z}{\mathbb{Z}}
\newcommand{\C}{\mathbb{C}}
\newcommand{\G}{\mathbb{G}}
\renewcommand{\P}{\mathbb{P}}
\newcommand{\Pe}{\P}
\newcommand{\A}{\mathbb{A}}
\newcommand{\unit}{\mathbf{1}}
\newcommand{\Na}{\mathbb{N}}
\newcommand{\Ph}{\mathcal{P}}
\begin{document}
\title{On the relation between Nori Motives and Kontsevich Periods}
\author{Annette Huber and Stefan M\"uller-Stach}
\date{April 2, 2012}
\begin{abstract}We show that the spectrum of Kontsevich's algebra of formal periods is a torsor under
the motivic Galois group for mixed motives over $\Q$. 
This assertion is stated without proof by Kontsevich (\cite{Ko} Theorem 6) and
originally due to Nori. In a series of appendices, we also provide the necessary details on 
Nori's category of motives (see the survey \cite{LeNori}).
\end{abstract} 
\maketitle
\tableofcontents
\section*{Introduction}
Let $X$ be a smooth variety over $\Q$. A  {\em
period number} of $X$ is the value of an integral
\[ \int_Z\omega\]
where $\omega$ is a closed algebraic (hence rational) differential form of degree $d$ and
$Z\subset X(\C)$ a closed real submanifold of dimension $d$. Periods
are complex numbers. A more conceptual way is to view periods of $X$ as the
values of the period pairing between $H_d(X(\C),\Q)$ (singular homology)
and $H^d_\dR(X/\Q)$ (algebraic de Rham cohomology), or -- equivalently -- matrix entries 
of the period isomorphism \cite{D1,D2}
\[
H^d_\dR(X/\Q) \otimes \C \longrightarrow  H^d(X(\C),\Q)\otimes \C 
\]
in any two $\Q$--bases. Stated like this, the notion generalizes to any variety over $\Q$ or even any mixed motive over $\Q$, 
independently of the chosen category of mixed motives.

The algebra of Kontsevich-Zagier periods \cite{KZ}, defined as integrals of algebraic 
functions over domains described by algebraic equations or inequalities with coefficients in $\Q$ is the set of
all periods of all mixed motives over $\Q$. It is a very interesting,
countable subalgebra of $\C$. It contains e.g. all algebraic numbers, $2\pi i$, and
all $\zeta(n)$ for $n\in\Z$. Indeed, if Beilinson's conjectures are
true, then all special values of $L$-functions of mixed motives are periods.

The relations between period numbers are mysterious and intertwined with transcendence theory. A general 
period conjecture goes back to Grothendieck, see the footnote on page 358 in \cite{grothendieck}, and the reformulations 
of Andr\'e in \cite{andre1} and \cite{andre2} chapitre 23.
Conjecturally, the only relations are in a sense the obvious ones, i.e., coming from geometry. 
In order to make this statement precise, Kontsevich introduced (\cite{Ko} Definition 20) the notion of a formal period. We recall
his definition (actually, a variant, see Remark \ref{rem2.9}).

\begin{defn}
The space of {\em effective formal periods} $P^+$ is defined as the $\Q$-vector space generated
by symbols $(X,D,\omega,\gamma)$, where 
$X$ is an algebraic variety over $\Q$, $D\subset X$ a subvariety, 
$\omega\in H^d_{\mathrm{dR}}(X,D)$, $\gamma\in H_d(X(\C),D(\C),\Q)$ with 
relations
\begin{enumerate}
\item linearity in $\omega$ and $\gamma$;
\item for every $f:X\to X'$ with $f(D)\subset D'$
\[ (X,D,f^*\omega',\gamma)=(X',D',\omega',f_*\gamma)\]
\item for every triple $Z\subset Y\subset X$
\[ (Y,Z,\omega,\partial \gamma)= (X,Y, \delta  \omega,\gamma)\]
with $\partial$ the connecting morphism for relative singular homology and $\delta$ 
the connecting morphism for relative de Rham cohomology.
\end{enumerate}
$P^+$ is turned into an algebra via
\[ (X,D,\omega,\gamma) (X',D',\omega',\gamma')=(X\times X',D\times X'\cup X'\times D,\omega \wedge \omega', \gamma \times \gamma')\ .\]
The space of {\em formal periods} is the localization $P$ of $P^+$ with respect to
the period of $(\G_m,\{1\},\frac{dX}{X}, S^1)$ where $S^1$ is the unit circle in $\C^*$.
\end{defn}
The period conjecture then predicts the injectivity of the evaluation
map $P\to\C$. We have nothing to say about this deep conjecture, which
includes for example the transcendence of $\pi$ and all $\zeta(2n+1)$ for $n\in\Na$ \cite{andre1}.

The main aim of this note is to provide the proof (see Corollary \ref{cor3.4}) of the following
result:

\begin{thm}[Nori, \cite{Ko} Theorem 6]
$\Spec(P)$ is a torsor under the motivic Galois group of Nori's category
of mixed motives over $\Q$.
\end{thm}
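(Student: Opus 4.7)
The plan is to realize $\Spec(P)$ as a Tannakian torsor. Let $H_B$ be the Betti fiber functor on Nori's category $\MMN$, so that by definition $\Gmot = \underline{\Aut}^\otimes(H_B)$. Algebraic de Rham cohomology extends to a second $\Q$-rational fiber functor $H_\dR$ on $\MMN$, and the standard Tannakian formalism (Deligne--Milne) then provides a $\Gmot$-torsor
\[
X_\mot \;:=\; \underline{\Iso}^\otimes(H_\dR, H_B)
\]
over $\Q$, whose $\C$-points include the period isomorphism. Theorem~1.2 would follow from a natural isomorphism of $\Q$-algebras $P \isom \mathcal{O}(X_\mot)$.

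\textbf{The map $P \to \mathcal{O}(X_\mot)$.} Each generator $(X,D,\omega,\gamma)$ determines a Nori motive $M = H^d(X,D)$ together with classes $\omega \in H_\dR(M)$ and $\gamma \in H_B(M)^\vee$. I would define
\[
(X,D,\omega,\gamma) \;\longmapsto\; \bigl[\,\phi \mapsto \langle \gamma,\phi(\omega)\rangle\,\bigr] \in \mathcal{O}(X_\mot)
\]
and check the relations. Relation (1) is linearity of the pairing. Relation (2) is functoriality: a morphism $f\colon(X,D)\to(X',D')$ induces a morphism $M\to M'$ in $\MMN$ with which every $\phi$ commutes. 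Relation (3) reflects the long exact sequence of a triple in $\MMN$, whose connecting morphism realizes to $\partial$ under $H_B$ and to $\delta$ under $H_\dR$. Multiplicativity follows from the K\"unneth formula, compatible on both sides. Finally, $(\G_m,\{1\},dX/X,S^1)$ maps to a matrix coefficient of the Tate motive, which is invertible on $X_\mot$ because both fiber functors are faithful, so the map factors through the localization $P$.

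\textbf{The inverse map via a diagrammatic presentation.} This is where the main work lies. The ring $\mathcal{O}(X_\mot)$ admits a universal description parallel to Nori's construction of $\MMN$: starting from the diagram of pairs $(X,D,d)$, one considers the $\Q$-algebra generated by symbols $(X,D,d,\omega,\gamma)$ modulo the relations coming from (i) linearity, (ii) morphisms in the diagram, (iii) boundary maps of triples, (iv) products of pairs, and (v) inversion at $2\pi i$. Running Nori's universal construction for two fiber functors instead of one shows that the resulting ring represents $X_\mot$, and by inspection it is precisely $P$. This yields the inverse homomorphism.

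\textbf{Main obstacle.} The delicate point is to verify that relations (1)--(3), together with multiplicativity and the one localization, already capture \emph{all} Tannakian relations on $X_\mot$ --- in particular those coming from morphisms in $\MMN$ that are not directly induced by morphisms of pairs, and those from tensor-constraints (associativity, commutativity, unit, and duals after inverting Tate). This reduces to Nori's diagrammatic construction of $\MMN$ (recalled in the appendices) together with the basic geometric input that every Nori motive is a subquotient of some $H^d(X,D)$, and that any morphism of such motives can, up to this diagrammatic equivalence, be produced from maps of pairs and boundary operators. Once this combinatorial comparison between the two universal constructions is in place, the torsor statement is formal.
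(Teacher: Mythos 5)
Your strategy is fundamentally the same as the paper's: identify $\Spec(P)$ with the Tannakian torsor $\underline{\Iso}^\otimes(H_\dR, H^*)$ of isomorphisms between the two fiber functors on $\MMN$, then conclude by general Tannakian formalism. Two steps in your sketch, however, hide genuine work.

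First, you take for granted that $\MMN$ is a neutral Tannakian category --- in particular rigid --- so that $\underline{\Iso}^\otimes(H_\dR, H^*)$ is a $\Gmot$-torsor rather than merely a scheme with an action of an affine monoid. In Nori's framework this is not free: the paper establishes rigidity (Section 1 together with Appendix C) via Nori's rigidity criterion, which requires producing motivic perfect pairings on a generating set of objects, ultimately via a motivic Poincar\'e duality for good pairs of the form $(W\ohne W_\infty,\, W_0 \ohne (W_0 \cap W_\infty))$ with $W$ smooth projective and $W_0\cup W_\infty$ a normal crossings divisor. Without this input, the torsor structure you invoke in the first paragraph does not exist.

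Second, your ``main obstacle'' is correctly identified as the crux, but the resolution you gesture at --- that every morphism in $\MMN$ is ``up to diagrammatic equivalence produced from maps of pairs and boundary operators'' --- is not quite the right statement and would not hold literally (projections onto subquotients are not of this form). The paper's Theorem \ref{thm2.6} resolves the issue differently and more cleanly. Nori's construction exhibits $\mathcal{O}(\Gmot)$ as a colimit over finite subdiagrams $F$ of $\End(H^*|_F)^\vee$, and correspondingly one defines the torsor ring directly at the diagram level as $A_{1,2} = \colim_F \Hom(H_\dR|_F, H^*|_F)^\vee$. The comparison with $P$ is then a finite-dimensional linear-algebra duality at each stage: $\Hom(H_\dR|_F, H^*|_F)$ is the subspace of $\prod_{p\in F} \Hom(T_1(p), T_2(p))$ cut out by the naturality conditions for the edges of $F$, so its dual is the quotient of $\bigoplus_{p\in F} T_1(p)\tensor T_2(p)^\vee$ by exactly the change-of-variables relations --- which is $P_{1,2}(F)$. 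Completeness of relations (1)--(3) is thus automatic from the colimit structure of the Tannakian coalgebra, with no claim about all morphisms of $\MMN$ being diagram-induced. Finally, you would still need the separate input (Theorem \ref{maintheorem} and Corollary \ref{cor3.4}) that passing between the diagrams of pairs, good pairs, and very good pairs does not change $A_{1,2}$; this is where Beilinson's basic lemma enters and where the flexibility to use Kontsevich's restricted class of generators comes from.
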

As already explained by Kontsevich, singular cohomology and algebraic de Rham
cohomology are both fiber functors on the same Tannaka category of motives.
By general Tannaka formalism, there is a pro-algebraic torsor of isomorphisms between them. The period pairing
is nothing but a complex point of this torsor. Our task was to check
that this torsor is nothing but the explicit $\Spec(P)$. While baffling at
first, the statement turns out to be a corollary of the very construction
of Nori's motives.

This brings us to the second part of this paper. 
Nori's unconditional construction of an abelian category of mixed
motives has been around for some time. Notes of talks have been circulating,
see \cite{N} and \cite{N1}.
In his survey article \cite{LeNori}, Levine includes a sketch of the
construction, bringing it into the public domain. These sources combined contain basically all the necessary ideas.
We decided to work out all technical details that were not obvious to us, but were 
needed in order to get a full proof of our main theorem. This material
is contained in the appendices. They are supposed to be self-contained
and independent of each other. We tried to make clear where we are using Nori's ideas. All mistakes
remain of course ours.

Section \ref{sect1} is another survey 
on Nori motives, which brings the results of the appendices together in
order to establish that Nori motives are a neutral Tannakian category.
Section \ref{sect2} gives an interpretation of the algebra of formal
periods in terms of Nori's machine. The main result is Theorem \ref{thm2.6}.
In section \ref{sect3} this is combined with the rigidity property of Nori's
category of mixed motives in order to deduce the statement on the torsor
structure.

Appendix \ref{sectA} clarifies in detail the notion of torsor used by Kontsevich
in \cite{Ko} which was unfamiliar to us. 
Appendix \ref{sectB} is on the multiplicative structure on Nori's
diagram categories. Appendix \ref{sectC} establishes a criterion for
an abelian tensor category with faithful fiber functor to be rigid.
Both appendices are supposed to be applied to Nori motives but formulated
in general. 

The key geometric input in Nori's approach was the so-called basic lemma to which 
we refer here as Beilinson's lemma since A. Beilinson had proved a more general version earlier.
It allows for ''cellular'' decompositions of algebraic varieties, in the
sense that their cohomology looks like cohomology of a cellular
decomposition of a manifold. In appendix \ref{sectC} this is used
to compare different definitions of Nori motives using ''pairs''
or ''good pairs'' or even ''very good pairs''. The results of this
section are essential input in order to apply the rigidity criterion
of section \ref{yoga}.

What is missing from our account is the proof of universality of Nori's
diagram category for a diagram with a representation. The paper \cite{wangenheim} by J. von Wangenheim
provides full details. There is also 
the (unfortunately unpublished) paper \cite{brug} by A. Brughi\`eres
to fill in this point.

\noindent{\bf Acknowledgements:} We would like to thank Y. Andr\'e, F. Knop, M. Nori, W.~Soergel, D. van Straten,  M. Wendt, 
  and G. W\"ustholz for comments and discussions. 
We are particular thankful to  organizers and participants of the Z\"urich Mathematical School's 2011 workshop  ''Motives, periods and transcendence'' in Alpbach. We
thank them for their careful reading, comments and corrections of the first version of this paper. Special thanks go to
J. Ayoub, U. Choudhury, M. Gallauer, S.~Gorchinskiy, L. K\"uhne, and S. Rybakov.

\section{Essentials of Nori Motives}\label{sect1}
We use the setup of \cite{N} for Nori motives. The key ideas are contained in 
the survey \cite{LeNori}. Parts of the theory and further details are also developed in 
appendix \ref{diagrams}. Nori works primarily with singular homology. We have switched 
to singular cohomology throughout. We restrict to rational coefficients for simplicity.

We fix the following notation.
\begin{itemize}
\item By {\em variety} we mean a reduced separated scheme of finite type over $\Q$. 
\item Let $\Qmod$ be the category of finite dimensional $\Q$-vector spaces.
\item 
A \emph{diagram} $D$ is a directed graph.
\item  A \emph{representation} $T:D\to \Qmod$ assigns to every vertex in
$D$ an object in $\Qmod$ and to every edge $e$ from $v$ to $v'$ a homomorphism
$T(e):T(v)\to T(v')$.
\item Let $\Ch(D,T)$ its associated \emph{diagram category}. It is 
the universal abelian category together with a functor $f_T: \Ch(D,T) \to \Qmod$
such that $T$ factors via $f_T$. We often write $T$ instead of $f_T$. The category
$\Ch(D,T)$ arises as the category of $A(T)$-comodules finite dimensional over  $\Q$ for a certain coalgebra $A(T)$. 
 For the explicit construction due to Nori, 
see appendix \ref{diagrams} and \cite{wangenheim}. When $D$ or $T$ are understood from the context, we often abbreviate $\Ch(T)$ or $\Ch(D)$.
\end{itemize}
The following are the cases of interest in the present paper:

\begin{defn}\label{goodpairs} \label{pairs}
\begin{enumerate}
\item The diagram $D^\eff$ of {\em effective pairs} consists of triples
$(X,Y,i)$ with $X$ a $\Q$-variety, $Y\subset X$ a closed subvariety
and an integer $i$. 
There are two types of edges  between effective good pairs:
\begin{enumerate}
\item  (functoriality) For every morphism  
$f: X \to X'$ with $f(Y) \subset Y'$ an edge
\[ f^*:(X',Y',i) \to (X,Y,i)\ .\]
\item  (coboundary) For every chain $X \supset Y \supset Z$ of closed $\Q$-subschemes of $X$ an
edge
\[ \partial:(Y,Z,i) \to (X,Y,i+1)\ .\] 
\end{enumerate}
The diagram is graded (see Definition \ref{graded}) by $|(X,Y,i)|=i$.

\item
The diagram $D^\eff_\Nori$ of \emph{effective good pairs} is the full subdiagram of
$D^\eff$
with vertices the
triples $(X,Y,i)$ 
 such that singular cohomology satisfies
\[
H^j(X(\C),Y(\C);\Q)=0, \text{ unless } j=i.  
\]
\item 
The diagram $\tilde{D}^\eff$ of {\em effective very good pairs} is the full subdiagram of those
effective good pairs $(X,Y,i)$ with $X$ affine, $X\ohne Y$ smooth and either $X$ of dimension  $i$ and $Y$ of dimension $i-1$,
or $X=Y$ of dimension less than $i$. 
\end{enumerate}

The diagrams $D$ of {\em pairs}, 
$D_\Nori$ of \emph{good pairs} and $\tilde{D}$ of {\em very good pairs} are obtained 
by localization (see Definition \ref{localize}) with respect to $(\G_m,\{1\},1)$.
\end{defn}
We use the representation $H^*:D_\Nori\to \Qmod$ which assigns to
$(X,Y,i)$ relative singular cohomology $H^i(X(\C),Y(\C),\Q)$.

\begin{rem}For the purposes of our paper $\Q$-coefficients are sufficient.
Nori's machine also works for integral coefficients.
\end{rem}

Good pairs exist in abundance, see Appendix \ref{yoga}. 

\begin{defn}
The category of (effective) \emph{mixed Nori motives} $\MMN$ (resp. $\MMN^\eff$) is 
defined as the diagram category $\Ch(D_\Nori, H^*)$ (resp. $\Ch(D^\eff_\Nori,H^*)$). 
For a good pair $(X,Y,i)$ we write $H^i_\Nori(X,Y)$ for the corresponding object in $\MMN$.
We put
\[ 
\unit(-1)=H^1_\Nori(\G_m,\{1\})\in \MMN^\eff\ .
\]
\end{defn}

\addtocounter{lemma}{1}
\begin{rem}It will established in Corollary \ref{cor1.6} that Nori motives can equivalently be defined using $D$ or $\tilde{D}$.
\end{rem}
\begin{thm}
\begin{enumerate}
\item
 This definition is equivalent to Nori's original definition. 
\item
$\MMN^\eff \subset \MMN$ are commutative tensor categories with a faithful fiber functor $H^*$.
\item $\MMN$ is the localization of $\MMN^\eff$ with respect to the
Lefschetz object $\unit(-1)$.
\end{enumerate}
\end{thm}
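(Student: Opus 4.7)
The plan is to reduce the three assertions to three ingredients proved elsewhere in the paper: the equivalence of diagram categories attached to the different variants of the diagram of pairs, the multiplicative-structure machinery of Appendix \ref{sectB}, and the behavior of diagram-theoretic localization.

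\textbf{Part (1).} The universal property of the diagram category yields canonical functors
\[
\Ch(\tilde{D}^\eff, H^*) \longrightarrow \Ch(D^\eff_\Nori, H^*) \longrightarrow \Ch(D^\eff, H^*),
\]
and Nori's original definition uses the diagram on the right (or a variant). Showing that both arrows are equivalences is the content of Appendix \ref{yoga}: Beilinson's Basic Lemma inductively exhibits every pair as built, via coboundary edges, from very good (in particular good) pairs, so the three diagrams generate the same coalgebra. The needed statement is Corollary \ref{cor1.6}.

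\textbf{Part (2).} The faithful fiber functor $H^* \colon \MMN^\eff \to \Qmod$ is part of the universal data defining the diagram category. The nontrivial content is the tensor structure. I would not work directly with $D^\eff_\Nori$, since a product of good pairs is typically \emph{not} good; instead I would work with $\tilde{D}^\eff$. A direct check shows that very good pairs form a graded multiplicative subdiagram under
\[
(X,Y,i) \times (X',Y',i') = \bigl(X \times X',\; X \times Y' \cup Y \times X',\; i+i'\bigr),
\]
because products of affine (resp.\ smooth) schemes are affine (resp.\ smooth), the dimension conditions behave additively, and the exceptional case $X=Y$ of low dimension is absorbed under products. The Künneth formula makes $H^*$ a multiplicative representation, so the machinery of Appendix \ref{sectB} installs a commutative tensor structure on $\Ch(\tilde{D}^\eff, H^*)$ with $H^*$ a tensor functor. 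Transporting along the equivalence from part (1) yields the tensor structure on $\MMN^\eff$; the same argument applied to $\tilde{D}$ gives $\MMN$, and the inclusion $\MMN^\eff \subset \MMN$ is a tensor functor by construction.

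\textbf{Part (3).} By Definition \ref{goodpairs}, $D_\Nori$ is the diagram-level localization of $D^\eff_\Nori$ at the vertex $(\G_m, \{1\}, 1)$, which represents $\unit(-1)$ under $H^*$. The general principle, also established in Appendix \ref{sectB}, that diagram localization at a vertex corresponds to tensor localization at the represented object then yields $\MMN = \MMN^\eff[\unit(-1)^{-1}]$.

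The main obstacle is part (2), more precisely the verification that the Künneth isomorphisms assemble into a multiplicative structure \emph{at the level of the diagram}, i.e.\ compatibly with both the functoriality and coboundary edges. This diagrammatic coherence check, rather than the existence of the product of very good pairs, is where the real content lies; parts (1) and (3) are then essentially formal consequences of the appendices.
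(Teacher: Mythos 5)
Your approach to parts (1) and (3) is in the right spirit, but part (2) rests on a mistaken premise and, as a result, misses the actual subtle point of the paper's argument.

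You claim that ``a product of good pairs is typically not good'' and therefore route the tensor structure through $\tilde{D}^\eff$. This is false: if $(X,Y,i)$ and $(X',Y',i')$ are good pairs, then by the K\"unneth formula (with field coefficients, so no $\mathrm{Tor}$ terms)
\[
H^j\bigl(X\times X',\, X\times Y'\cup Y\times X'\bigr)\;\cong\;\bigoplus_{a+b=j}H^a(X,Y)\tensor H^b(X',Y'),
\]
which is concentrated in degree $j=i+i'$, so the product is again a good pair. The paper exploits exactly this: it equips $D^\eff_\Nori$ itself with a commutative product structure and applies Proposition \ref{tensorcat} directly, so there is no need for the detour through very good pairs or for transporting the tensor structure along an equivalence. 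Your detour would also land, but it adds a step that the paper's definition was engineered to avoid.

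More importantly, the genuine difficulty in part (2) is not closure of the diagram under products (that is formal once you use good pairs), nor the ``diagrammatic coherence of K\"unneth,'' but the \emph{sign}: condition (3) of Definition \ref{graded} requires $\id\otimes T(\gamma)$ to appear with a Koszul sign $(-1)^{|\gamma|}$, and verifying this for the coboundary edges of $D^\eff_\Nori$ requires fixing a sign convention for $\partial$ compatible with cup products in the first variable and compatible \emph{up to sign} in the second. The paper flags this explicitly as the delicate point; your write-up passes over it, which is the one place where the argument really has content.

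For part (1), your route through Corollary \ref{cor1.6} addresses the comparison between $D^\eff$, $D^\eff_\Nori$, and $\tilde{D}^\eff$, but Nori's/Levine's original definition is phrased as comodules over the localized coalgebra $A^\eff_\chi$; the paper closes part (1) by identifying this with $\MMN$ via Proposition \ref{localcat}, not via Corollary \ref{cor1.6}. Part (3) matches the paper: it is an application of Assumption \ref{localassump} and Proposition \ref{localcat}.
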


\begin{proof} 
$D_\Nori^\eff$ is a graded diagram in the sense of Definition \ref{graded}.
It carries a commutative multiplicative structure (see Definition \ref{graded} again) by
\[ 
(X,Y,i)\times (X',Y',i')=(X\times X', X\times Y'\cup Y\times X',i+i').
\]
with unit given by $(\Spec \Q,\emptyset,0)$ and
\[ u:(X,Y,i)\to (\Spec \Q,\emptyset,0)\times (X,Y,i)=(\Spec \Q\times X,\Spec \Q\times Y,i)\]
be given by the natural isomorphism of varieties.
Let also
\begin{gather*}
 \alpha: (X,Y,i)\times (X',Y',i')\to (X',Y',i')\times (X,Y,i)\\
 \beta:(X,Y,i)\times\left( (X',Y',i')\times (X'',Y'',i'')\right)\to
          \left((X,Y,i)\times (X',Y',i')\right)\times (X'',Y'',i'')
\end{gather*}
be given by the natural isomorphisms of varieties.

$H^*$ is a graded representation in the sense of Definition \ref{graded}.
Properties (2) and (3) depend on a choice of a sign convention such that
the boundary map $\partial$ is compatible with cup products in the first 
variable and compatible up to sign in the second variable.

Hence by Proposition \ref{tensorcat}, the category $\MMN^\eff$ carries a tensor structure.

The Lefschetz object $(\G_m,\{1\},1)$ satisfies Assumption \ref{localassump}, 
hence by Proposition \ref{localcat} the category $\MMN$ is the localization
of $\MMN^\eff$ at $\unit(-1)$ and also a tensor category.

By definition, $D^\eff_\Nori$ is the category of cohomological good 
pairs in the terminology of \cite{LeNori}. In loc. cit. the category of
Nori motives is defined as the category of comodules of finite type over
$\Q$ for the localization of the ring $A^\eff$ with respect to the
element $\chi\in A(\unit(-1))$ considered in Proposition \ref{localcat}.
By this Proposition, the category of $A^\eff_\chi$-comodules agrees with
$\MMN$.
\end{proof}

\subsection*{Comparing diagrams and diagram categories}
Nori establishes that the representation $T=H^*$ extends to all pairs of varieties.

\begin{cor} \label{cor1.6} The diagram categories of $D^\eff$ and $\tilde{D}^\eff$ with
respect to singular cohomology are equivalent to $\MMN^\eff$ as abelian categories. The
diagram categories of $D$ and $\tilde{D}$ are equivalent to $\MMN$. 
\end{cor}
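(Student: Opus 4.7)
The plan is to factor the claim through the two inclusions of diagrams
\[ \tilde{D}^\eff \subset D^\eff_\Nori \subset D^\eff \]
(and their analogues after localizing at $(\G_m,\{1\},1)$), treating each inclusion separately. For any inclusion $D'\subset D$ on which singular cohomology $H^*$ is defined, the universal property of the diagram category produces a canonical faithful exact functor $\Ch(D', H^*) \to \Ch(D, H^*)$. By the general criterion to be established in appendix \ref{sectC}, such a functor is an equivalence as soon as every vertex and every edge of the larger diagram can be realized, compatibly with the fiber functor, as an object respectively morphism of the smaller diagram category.

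The geometric input is Beilinson's basic lemma. Applied iteratively to an arbitrary pair $(X,Y)$ it produces a filtration by closed subvarieties
\[ Y = F_{-1} \subset F_0 \subset F_1 \subset \cdots \subset F_n = X \]
such that every step $(F_j, F_{j-1}, j)$ is a very good pair. The coboundary edges $\partial$ of the diagram assemble the associated objects of $\Ch(\tilde{D}^\eff, H^*)$ into a cochain complex whose underlying complex of $\Q$-vector spaces computes $H^*(X(\C), Y(\C), \Q)$. Its $i$-th cohomology is the candidate object $H^i_\Nori(X,Y)\in\Ch(\tilde{D}^\eff, H^*)$. When $(X,Y,i)$ is already a good pair the complex reduces to a single term in degree $i$, which is the case relevant for the inclusion $\tilde{D}^\eff \subset D^\eff_\Nori$; for arbitrary $(X,Y,i)$ it treats $D^\eff_\Nori \subset D^\eff$.

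The main obstacle will be to verify that this candidate is independent of the chosen filtration and functorial in the pair, so that it assembles into a realization of all of $D^\eff$ inside $\Ch(\tilde{D}^\eff, H^*)$. The usual argument proceeds by showing that any two Beilinson filtrations admit a common refinement, that refinement induces a quasi-isomorphism of the associated complexes naturally in $(X,Y)$, and that a morphism $f\colon(X,Y)\to(X',Y')$ can after sufficient refinement be lifted to a morphism of filtrations, hence of complexes. Compatibility with the functoriality and boundary edges is then a diagram chase. Once this is in place, the comparison functors admit quasi-inverses and are equivalences, which yields the effective assertion.

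The passage to the non-effective version is automatic. As in the proof of the preceding theorem, localization at $\unit(-1)$ amounts, on the level of coalgebras, to inverting the element $\chi\in A(\unit(-1))$ from the localization proposition of appendix \ref{diagrams}; this operation is preserved by equivalences of comodule categories. Hence the three equivalences $\Ch(\tilde{D}^\eff, H^*) \isom \MMN^\eff \isom \Ch(D^\eff, H^*)$ descend to $\Ch(\tilde{D}, H^*) \isom \MMN \isom \Ch(D, H^*)$, finishing the proof.
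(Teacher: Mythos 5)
Your overall strategy is the same as the paper's: realize the larger diagram $D^\eff$ inside $\Ch(\tilde{D}^\eff,H^*)$ via Beilinson filtrations and complexes, then use the universal property of the diagram category to produce a comparison functor $j$ with $j\circ i\cong\id$, from which the equivalence follows. However, two specific points do not hold up.

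First, the basic lemma (Lemma \ref{basiclemma}) applies only to \emph{affine} $X$, so you cannot ``apply it iteratively to an arbitrary pair $(X,Y)$'' to produce a very good filtration directly. This is precisely the reason the paper's Proposition \ref{complexrepr} works on the level of complexes in $\ZVar$ and first replaces a general variety by the \v{C}ech complex of a rigidified affine open cover before applying the basic lemma termwise. Without this step your construction only realizes the affine pairs in $D^\eff$; a reader following your proposal cannot handle, say, $H^2_\Nori(\Pe^1)$ with bare hands. This is a genuine gap and exactly the content one must borrow from Appendix \ref{yoga}.

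Second, the ``general criterion to be established in appendix \ref{sectC}'' does not exist there: Appendix \ref{sectC} is the rigidity criterion for tensor categories and has nothing to do with comparing diagram categories. The mechanism actually used is the bare universal property (Proposition \ref{abelian_hull}): represent $D^\eff$ in $\Ch(\tilde{D}^\eff)$ extending $H^*$, obtain a faithful exact $j\colon\Ch(D^\eff)\to\Ch(\tilde{D}^\eff)$ with $j\circ i\cong\id$, and conclude that $j$ is essentially surjective (from $j\circ i\cong\id$) and full, hence an equivalence, whence $i$ is as well. Replacing your appeal to a nonexistent criterion with this argument, and routing the filtration construction through affine covers as in Proposition \ref{complexrepr}, brings the proposal into line with the paper. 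The remainder --- independence of choices, functoriality, the boundary triangle for triples $Z\subset Y\subset X$, and descent of the equivalence through the localization at $\unit(-1)$ via Proposition \ref{localcat} --- is correctly identified.
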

\begin{proof} In the following proof we omit the $T$ in the notation $\Ch(D,T)$. 
It suffices to consider the effective case.
The inclusion of diagrams induces faithful functors
\[ 
i:\Ch(\tilde{D}^\eff)\to \MMN^\eff\to \Ch(D^\eff).
\]
We are going to represent the diagram $D^\eff$ in $\Ch(\tilde{D}^\eff)$
such the restriction of the representation to $\tilde{D}^\eff$ gives
back $H^*$ (up to natural isomorphism).
By the universal property this induces a faithful functor
\[ j: \Ch(D^\eff)\to \Ch(\tilde{D}^\eff)\]
such that $j\circ i=\id$ (up to natural isomorphism). This implies
that $j$ is essentially surjective and full. Hence $j$ is an equivalence
of categories. This implies also that $i$ is an equivalence of categories.

We now turn to the construction of the representation of
$D^\eff$ in $\Ch(\tilde{D}^\eff)$.

We apply Proposition \ref{complexrepr}
to
\[ H^*:\tilde{D}^\eff\to \Ch(\tilde{D}^\eff)\]
and get a functor
\[ R: C_b(\ZVar)\to D^b(\Ch(\tilde{D}^\eff)).\]
Consider an  effective pair $(X,Y,i)$ in $D$. It is represented by
\[ H^i_\Nori(X,Y)=H^i(R(X,Y))\in \Ch(\tilde{D}^\eff)\]
where
\[ R(X,Y)=R(\cone(Y\to X))\ .\]
The construction
is functorial for morphisms of pairs. This allows to represent
edges of type $f^*$.

Finally, we need to consider edges corresponding to coboundary maps for triples $X\supset Y\supset Z$. 
In this case, it follows from the construction of $R$ that there is a natural triangle 
\[ R(X,Y)\to R(X,Z)\to R(Y,Z).\]
We use the connecting morphism in cohomology to represent the edge
$(Y,Z,i)\to (X,Y,i+1)$.
\end{proof}

\begin{cor}\label{cor1.7}Every object of $\MMN^\eff$ is subquotient of a direct sum of objects of
the form $H^i_\Nori(X,Y)$ for a good pair $(X,Y,i)$ where $X=W\ohne W_\infty$  and
$Y=W_0\ohne (W_0\cap W_\infty)$ with $W$ smooth projective, $W_\infty\cup W_0$ a divisor with normal crossings.
\end{cor}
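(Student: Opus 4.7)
The plan is to combine Corollary \ref{cor1.6} with Hironaka's resolution of singularities. By Corollary \ref{cor1.6} we have $\MMN^\eff\isom\Ch(\tilde{D}^\eff,H^*)$, and in any diagram category every object is a subquotient of a finite direct sum of objects of the form $H^*(v)$ with $v$ a vertex of the diagram. Hence every object of $\MMN^\eff$ is a subquotient of a direct sum of $H^i_\Nori(X_0,Y_0)$ with $(X_0,Y_0,i)$ a very good pair. It suffices to exhibit, for each such very good pair, a good pair $(X,Y,i)$ of the form described in the corollary together with an isomorphism $H^i_\Nori(X,Y)\isom H^i_\Nori(X_0,Y_0)$ in $\MMN^\eff$.

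For the construction, recall that $X_0$ is affine and $X_0\ohne Y_0$ is smooth. Choose a projective closure $\bar{X}_0$ of $X_0$, let $\bar{Y}_0$ be the closure of $Y_0$, set $B:=\bar{X}_0\ohne X_0$, and apply strong resolution of singularities to the pair $(\bar{X}_0,\bar{Y}_0\cup B)$. One obtains a proper birational morphism $\pi:W\to\bar{X}_0$ with $W$ smooth projective, which is an isomorphism over the smooth locus containing $X_0\ohne Y_0$, and for which $D:=\pi^{-1}(\bar{Y}_0\cup B)$ together with the exceptional locus of $\pi$ is a simple normal crossings divisor on $W$. Let $W_\infty$ be the union of the irreducible components of $D$ whose image is contained in $B$, and $W_0$ the union of the remaining components. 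With $X:=W\ohne W_\infty$ and $Y:=W_0\ohne(W_0\cap W_\infty)$, the pair $(X,Y)$ has the required form, and $\pi$ restricts to a proper morphism of pairs $(X,Y)\to(X_0,Y_0)$ which induces an isomorphism $X\ohne Y\isom X_0\ohne Y_0$. Properness of $\pi\colon X\to X_0$ combined with the fact that it is an isomorphism outside $Y_0$ implies that the cone of $\Q_{X_0}\to R\pi_*\Q_X$ is supported on $Y_0$; combined with the analogous statement for the restriction $Y\to Y_0$ and a five-lemma argument on the long exact sequences of the pairs $(X,Y)$ and $(X_0,Y_0)$, this yields $H^j(X,Y;\Q)\isom H^j(X_0,Y_0;\Q)$ in every degree. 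In particular $(X,Y,i)$ is a good pair, and the functoriality edge attached to $\pi$ in the diagram of pairs realizes the desired isomorphism in $\MMN^\eff\isom\Ch(D,H^*)$, the latter being an isomorphism in $\MMN^\eff$ because $H^*$ is a faithful exact functor.

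The main obstacle is the geometric step. One must run Hironaka's theorem so that the preimage of $\bar{Y}_0\cup B$ together with any newly created exceptional divisors assembles into a single simple normal crossings divisor $D$, and then split $D$ cleanly according to images in $B$, with no component of $W_0$ sitting over $B$ and no component of $W_\infty$ sitting over $X_0\ohne Y_0$. Strong resolution (isomorphism over the smooth locus, prescribed boundary) provides exactly this control: the exceptional divisors above $X_0\ohne Y_0$ are empty, while those above points of $\bar{Y}_0\cup B$ can be sorted by their image. Once the geometry is in place, the cohomological identification is a routine application of proper base change.
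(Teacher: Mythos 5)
Your proof is correct and follows essentially the same route as the paper: reduce to very good pairs via Proposition~\ref{criterion} (through Corollary~\ref{cor1.6}), then resolve singularities to put the generator in the stated form, and conclude via proper base change/excision. You spell out the resolution step and the cohomological comparison in more detail than the paper, which merely cites ``proper base change, i.e.,\ excision,'' but the underlying argument is the same.
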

\begin{proof} By Proposition \ref{criterion} every object in
the diagram category of $\tilde{D}^\eff$ (and hence $\MMN$) is subquotient of a direct sum of some
$H^i_\Nori(X,Y)$ with $(X,Y,i)$ very good. 

We follow Nori: By resolution of singularities there is a smooth projective variety $W$ and a normal crossing divisors
$W_0 \cup W_\infty \subset W$ together with a proper, surjective morphism $\pi: W \ohne W_\infty \to X$
such that one has $\pi^{-1}(Y)=W_0 \ohne W_\infty$ and $\pi: W \ohne \pi^{-1}(Y) \to X \ohne Y$ 
is an isomorphism. This implies that
\[ H^i_\Nori(W\ohne W_\infty, W_0\ohne (W_0 \cap W_\infty))\to H^i_\Nori(X,Y)\]
is also an isomorphism by proper base change, i.e., excision.
\end{proof}

\begin{rem} \label{rem1.9}Note that the pair $(W\ohne W_\infty,W_0\ohne (W_0 \cap W_\infty)$ is good, but not very good in general.
Replacing $Y$ by a larger closed subset $Z$, one may, however, assume that $W_0\ohne (W_0 \cap W_\infty)$ is affine.
Therefore, by Lemma~\ref{dualexist}, the dual of each generator can be assumed to be very good. 
 
It is not clear to us if it suffices to construct Nori's category using the diagram
of $(X,Y, i)$ with $X$ smooth, $Y$ a divisor with normal crossings.
The corollary says that the diagram category has the right ''generators'', but there might be too few ''relations''.
\end{rem}

\begin{cor}\label{les}
Let $Z\subset X$ be a closed immersion. Then there is
a natural object $H^i_Z(X)$ in $\MMN$ representing cohomology with
supports. There is a natural long exact sequence
\[\cdots \to H^i_Z(X)\to H^i_\Nori(X)\to H^i_\Nori(X\ohne Z)\to H^{i+1}_Z(X)\to\cdots\]
\end{cor}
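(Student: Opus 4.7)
The plan is to reuse the functor
\[ R: C_b(\ZVar) \to D^b(\Ch(\tilde{D}^\eff)) \]
produced in the proof of Corollary \ref{cor1.6} via Proposition \ref{complexrepr}. Recall that $R$ is defined on bounded complexes in the free abelian group on varieties, and that for a closed pair $(X,Y)$ one sets $R(X,Y) := R(\cone(Y\to X))$ with $H^i(R(X,Y))=H^i_\Nori(X,Y)$. The crucial observation is that $R$ accepts \emph{any} morphism of varieties, not just closed immersions. Applying $R$ to the open immersion $j: X\ohne Z \hookrightarrow X$ yields a morphism $R(X\ohne Z)\to R(X)$ in $D^b(\Ch(\tilde{D}^\eff))$, and I define
\[ H^i_Z(X) := H^i\bigl(R(\cone(X\ohne Z \to X))\bigr) \in \MMN. \]
Functoriality of $R$ and of the cone construction (applied to the evident morphisms of pairs $(X',Z')\to(X,Z)$) provides the naturality in $(X,Z)$.

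The long exact sequence is then read off from the distinguished triangle
\[ R(X\ohne Z) \to R(X) \to R(\cone(X\ohne Z\to X)) \to R(X\ohne Z)[1] \]
in $D^b(\Ch(\tilde{D}^\eff))$ by applying the cohomological functor $H^\bullet$; this produces exactly
\[ \cdots \to H^i_Z(X)\to H^i_\Nori(X)\to H^i_\Nori(X\ohne Z)\to H^{i+1}_Z(X)\to\cdots \]
as required.

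The only genuine thing to verify is that the fiber functor $H^*$ sends $H^i_Z(X)$ to the classical cohomology with supports $H^i_Z(X(\C),\Q)$ and that the connecting maps agree with the topological ones. Since $R$ was constructed so as to compute singular cohomology on varieties (this is built into Proposition \ref{complexrepr}), applying $H^*$ to $R(\cone(j))$ recovers the cohomology of the topological mapping cone of $X(\C)\ohne Z(\C)\hookrightarrow X(\C)$, which is $H^i(X(\C),X(\C)\ohne Z(\C);\Q)=H^i_Z(X(\C),\Q)$ by the standard definition of cohomology with supports. Compatibility of the connecting maps is then formal from the exactness of $H^*$ on distinguished triangles. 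No serious obstacle arises; the entire content of the corollary is that the Nori construction via $R$ is sufficiently flexible to accommodate open inclusions inside the cone just as well as closed ones.
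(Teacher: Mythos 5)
Your proof is correct and follows exactly the paper's approach: one sets $R_Z(X) = R(\cone(U\to X))$ with $U = X\ohne Z$, defines $H^i_Z(X) = H^i(R_Z(X))$, and reads off the long exact sequence from the resulting distinguished triangle. The paper's own proof is just the one-line definition; your write-up spells out the same construction in more detail.
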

\begin{proof} Let $U=X\ohne Z$. Put
\[ R_Z(X)=R(\cone(U\to X)),\hspace{3ex}H^i_Z(X)= H^i(R_Z(X))\ .\]
\end{proof}

\subsection*{Rigidity}

In order to establish duality, we need to check that Poincar\'e duality
is  motivic, at least in a weak sense.

\begin{defn}
Let $\unit(-1)=H^1_\Nori(\G_m)$ and $\unit(-n)=\unit(-1)^{\tensor n}$.
\end{defn}

\begin{lemma}\label{PD}
\begin{enumerate}
\item $H^{2n}_\Nori(\Pe^N)=\unit(-n)$ for $N \geq n \geq 0$.
\item Let $Z$ be a projective variety of dimension $n$. Then $H^{2n}_\Nori(Z)\isom \unit(-n)$.
\item Let $X$ be a smooth variety, $Z\subset X$ a smooth, irreducible, closed subvariety of pure codimension $n$. Then 
\[ H^{2n}_Z(X)\isom \unit(-n).
\]
\end{enumerate}
\end{lemma}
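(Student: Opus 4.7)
The plan is to prove the three statements in the order (3), (1), (2). The central tool is the long exact sequence for cohomology with supports (Corollary~\ref{les}) combined with faithfulness of the fiber functor $H^*$: any morphism in the diagram that is an isomorphism on singular cohomology is automatically an isomorphism in $\MMN$, since $H^*$ is a faithful exact functor between abelian categories and hence conservative. I also invoke homotopy invariance $H^i_\Nori(Y\times\A^1)\isom H^i_\Nori(Y)$ (realized by the projection edge) and the K\"unneth decomposition, available because $\MMN$ is a tensor category.

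For (3), I would first reduce --- via excision and the \'etale-local structure of smooth closed immersions --- to the trivial bundle case $(X,Z)=(Z\times\A^n,\,Z\times\{0\})$; K\"unneth then identifies the question with $H^{2n}_{\{0\}}(\A^n)\otimes H^0_\Nori(Z)$, which equals $H^{2n}_{\{0\}}(\A^n)$ for $Z$ connected. The base case $n=1$ is immediate from Corollary~\ref{les} for $\{0\}\subset\A^1$ together with homotopy invariance: $H^2_{\{0\}}(\A^1)\isom H^1_\Nori(\G_m)\isom H^1_\Nori(\G_m,\{1\})=\unit(-1)$. For $n\ge 2$, cover $\A^n\ohne\{0\}$ by the principal affines $D_i=\{x_i\neq 0\}$; any intersection of $k$ of them is $\A^{n-k}\times\G_m^k$, and for dimension reasons the only contribution to $H^{2n-1}_\Nori(\A^n\ohne\{0\})$ in the associated \v{C}ech spectral sequence inside $\MMN$ comes from the top stratum $\G_m^n$, whose $H^n_\Nori$ is $\unit(-n)$ by K\"unneth. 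Corollary~\ref{les} for $\{0\}\subset\A^n$ then yields $H^{2n}_{\{0\}}(\A^n)\isom H^{2n-1}_\Nori(\A^n\ohne\{0\})=\unit(-n)$.

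For (1) with $N=n$, apply (3) to $Z$ a point in $\P^n$: the complement $\P^n\ohne\{\mathrm{pt}\}$ is an $\A^1$-bundle over $\P^{n-1}$ (real dimension $2n-2$), so both $H^{2n-1}_\Nori$ and $H^{2n}_\Nori$ of it vanish, and Corollary~\ref{les} identifies $H^{2n}_\Nori(\P^n)$ with $H^{2n}_{\{\mathrm{pt}\}}(\P^n)=\unit(-n)$. For $N>n$, a linear embedding $i:\P^n\hookrightarrow\P^N$ provides an edge $i^*:H^{2n}_\Nori(\P^N)\to H^{2n}_\Nori(\P^n)$ mapping $h^n$ to $h^n$, hence an isomorphism in $\MMN$. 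For (2), assuming $Z$ irreducible: if $Z$ is smooth, Noether normalization provides a finite surjection $f:Z\to\P^n$, and the identity $f_*f^*=\deg(f)\cdot\id$ on singular cohomology shows the edge $f^*:H^{2n}_\Nori(\P^n)\to H^{2n}_\Nori(Z)$ is nonzero between one-dimensional fibers, hence an isomorphism; for singular $Z$, a resolution $\pi:\tilde Z\to Z$ reduces to the smooth projective case, since $\pi^*$ is an isomorphism on top singular cohomology of an irreducible variety.

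The main obstacle is (3): the classical singular identity $H^{2n}_{\{0\}}(\A^n)\isom\Q$ via $\G_m^n$ is elementary, but translating it into $\MMN$ demands either a \v{C}ech spectral sequence or an iterated cone construction from Corollary~\ref{les}, both of which require many compatible edges in the diagram to be assembled coherently. The preliminary reduction of a general smooth pair $(X,Z)$ to $(Z\times\A^n,\,Z\times\{0\})$ similarly requires that excision and the \'etale-local trivialization be realized inside the Nori framework, not merely on underlying vector spaces.
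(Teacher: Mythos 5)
Your overall strategy mirrors the paper's: transfer isomorphisms from singular cohomology to $\MMN$ via faithfulness of $H^*$, exploit the long exact sequence of Corollary \ref{les}, and use K\"unneth together with homotopy invariance. The \v{C}ech computation of $H^{2n-1}_\Nori(\A^n\ohne\{0\})$ is sound (the paper instead uses K\"unneth with supports and induction on $n$), and deriving (1) from (3) via $\{\mathrm{pt}\}\subset\P^n$ works, whereas the paper proves (1) directly through the \v{C}ech complex for $\P^N$ and Corollary \ref{cord.17}. For (2) the paper takes a much shorter route: the restriction edge $H^{2n}_\Nori(\P^N)\to H^{2n}_\Nori(Z)$ is already an isomorphism on singular cohomology (its transpose sends the fundamental class $[Z]$ to a nonzero element of $H_{2n}(\P^N)$), so no resolution of singularities and no Noether normalization are required.

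There is, however, a genuine gap in your proof of (3). You claim to reduce the general smooth pair $(X,Z)$ to the trivial case $(Z\times\A^n,\,Z\times\{0\})$ ``via excision and the \'etale-local structure of smooth closed immersions,'' and you yourself flag this as a worry at the end. That worry is well founded: a smooth closed immersion is split only \'etale-locally or formally, not Zariski-locally, so there is no open $V\supset Z$ with $(V,Z)\isom(Z\times\A^n,\,Z\times\{0\})$; and passing to an \'etale neighborhood trivializes the pair only around a single point of $Z$, not to the global trivial bundle you need. The paper bridges exactly this step by \emph{deformation to the normal cone}: one forms $D(X,Z)\to\A^1$ with general fiber $X$ and special fiber $N_ZX$, obtaining Gysin isomorphisms on singular cohomology (hence, by faithfulness, in $\MMN$)
\[
H^{2n}_Z(X)\leftarrow H^{2n}_{Z\times\A^1}(D(X,Z))\rightarrow H^{2n}_Z(N_ZX),
\]
and then trivializes the normal bundle over a dense open $U\subset Z$, using that $H^{2n}_Z(N_ZX)\to H^{2n}_U(\pi^{-1}(U))$ is an isomorphism since both sides compute $H^0$ of an irreducible variety. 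Only after these two steps does one arrive at $(Z\times\A^n,\,Z\times\{0\})$, from which your K\"unneth-with-supports argument takes over. Without deformation to the normal cone (or some equivalent device), the reduction is unjustified.
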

\begin{proof}
(1) Embedding projective spaces linearly into higher dimensional 
projective spaces induces isomorphisms on cohomology. Hence
it suffices to check the top cohomology of $\Pe^N$.

We start with $\Pe^1$. Consider the standard cover of $\Pe^1$ by
$U_1=\A^1$ and $U_2=\Pe^1\ohne\{0\}$. We have $U_1\cap U_2=\G_m$.
By Corollary \ref{cord.17} 
\[ R(\Pe^1)\to\cone\biggl(R(U_1)\oplus R(U_2)\to R(\G_m)\biggr)[-1]\]
is an isomorphism in the derived category.
This induces the isomorphism $H^2_\Nori(\Pe^1)\to H^1_\Nori(\G_m)$. 
Similarly, the \v{C}ech complex (see Definition \ref{chech}) for the standard affine cover of $\Pe^N$
relates $H^{2N}_\Nori(\Pe^N)$ with $H^N_\Nori(\G_m^N)$.

(2) Let $Z\subset \Pe^N$ be a closed immersion with $N$ large enough. 
Then $H^{2n}_\Nori(Z)\to H^{2n}_\Nori(\Pe^N)$ is an isomorphism in $\MMN$ because it
is in singular cohomology. 

(3)  
We note first that (3) holds in singular cohomology by the Gysin isomorphism 
\[
H^0(Z) {\buildrel \cong \over \longrightarrow} H^{2n}_Z(X) 
\]
under our assumptions. 
For the embedding $Z \subset X$ one has the deformation to the normal cone \cite[Sec. 5.1]{Fu}, i.e., a smooth scheme 
$D(X,Z)$ together with a morphism to $\A^1$ such that the fiber over $0$ is given by the normal bundle $N_Z X$ of $Z$ in $X$, and the other fibers by $X$. 
The product $Z \times \A^1$ can be embedded into $D(X,Z)$ as a closed subvariety of codimension $n$, inducing the embeddings of $Z \subset X$ as well as
the embedding of the zero section $Z \subset N_Z X$ over $0$.   
Hence, using the three Gysin isomorphisms and homotopy invariance, it follows that there are isomorphisms
\[
H^{2n}_Z(X) \leftarrow H^{2n}_{Z \times \A^1} (D(X,Z)) \rightarrow H^{2n}_Z(N_Z X)
\]
in singular cohomology and hence in our category.
Thus, we have reduced the problem to the embedding of the zero section $Z \hookrightarrow N_Z X$. 
However, the normal bundle $\pi: N_Z X \to Z$ trivializes on some dense open subset $U \subset Z$. This induces an isomorphism
\[
H^{2n}_Z(N_Z X) \to H^{2n}_U( \pi^{-1}(U)),
\]
and we may assume that the normal bundle $N_Z X$ is trivial. In this case, we have 
\[
N_Z(X)=N_{Z \times \{0\}}(Z \times \A^n) = N_{\{0\}}(\A^n),
\]
so that we have reached the case of $Z=\{0\} \subset \A^n$. Using the K\"unneth formula with supports and induction on $n$, it suffices 
to consider $H^{2}_{\{0\}}(\A^1)$ which is isomorphic to $H^1(\G_m)=\unit(-1)$ by Cor. \ref{les}. 
%
%
\end{proof} 

The following lemma (more precisely, its dual) is formulated implicitly in \cite{N} in order to
establish rigidity of $\MMN$.

\begin{lemma}\label{dualexist}
Let $W$ be a smooth projective variety of dimension $i$, $W_0,W_\infty\subset W$ divisors
such that $W_0\cup W_\infty$ is a normal crossings divisor.
Let 
\begin{align*}
   X&=W\ohne W_\infty\\
  Y&=W_0\ohne W_0\cap W_\infty\\
  X'&=W\ohne W_0\\
  Y'&=W_\infty\ohne W_0\cap W_\infty
\end{align*}
We assume
that $(X,Y)$ is a very good pair.

Then there is a morphism in $\MMN$
\[ q:\unit\to H^i_\Nori(X,Y)\tensor H^i_\Nori(X',Y')(i)
\]
such that the dual of $H^*(q)$ is a perfect pairing.
\end{lemma}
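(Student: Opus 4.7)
The plan is to realize $q$ as the motivic class of the diagonal in the relative cohomology of the product pair, following the classical Poincar\'e-Lefschetz coevaluation construction. Set $V=X\times X'$ and $A=Y\times X'\cup X\times Y'$. By Proposition \ref{tensorcat} together with the K\"unneth formula for good pairs, the tensor structure on $\MMN$ identifies
\[
H^i_\Nori(X,Y)\otimes H^i_\Nori(X',Y')\cong H^{2i}_\Nori(V,A),
\]
so producing $q$ reduces to producing a morphism $\unit\to H^{2i}_\Nori(V,A)(i)$.

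First I would analyze the diagonal $\Delta\colon X\cap X'\hookrightarrow V$, $w\mapsto(w,w)$. Closedness of its image follows from $\Delta(X\cap X')=\Delta_W\cap V$ with $\Delta_W\subset W\times W$ closed; smoothness and irreducibility of $X\cap X'=W\smallsetminus(W_0\cup W_\infty)$ come from those of $W$ (after reducing to the connected case if necessary), and the codimension in $V$ is $i$. The key geometric input is the disjointness $\Delta(X\cap X')\cap A=\emptyset$, which follows from $\Delta\cap(Y\times X')\subset W_0\cap(W\smallsetminus W_0)=\emptyset$ and symmetrically $\Delta\cap(X\times Y')=\emptyset$. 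Lemma \ref{PD}(3) then provides the Gysin isomorphism $\eta\colon\unit\xrightarrow{\sim}H^{2i}_\Delta(V)(i)$. To transport $\eta$ into $H^{2i}_\Nori(V,A)(i)$, I apply the octahedron axiom to the composition $A\hookrightarrow V\smallsetminus\Delta\hookrightarrow V$ (valid by disjointness): through Nori's $R$-functor this yields the distinguished triangle
\[
R(V\smallsetminus\Delta,A)\to R(V,A)\to R_\Delta(V)\to R(V\smallsetminus\Delta,A)[1].
\]
The associated long exact sequence in $\MMN$ shows that $\eta$ lifts to $H^{2i}_\Nori(V,A)(i)$ iff a connecting morphism $\partial$ vanishes on $\eta$. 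By faithfulness of the fiber functor $H^*$ and the existence of a lift in singular cohomology (the classical coevaluation class), $H^*(\partial\circ\eta)=0$, hence $\partial\circ\eta=0$ in $\MMN$; I take any lift as $q$.

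Finally, the dual of $H^*(q)$ is the Poincar\'e-Lefschetz pairing $H^i(X,Y)\otimes H^i(X',Y')\to\Q(-i)$, which is perfect by classical duality for the smooth open $X\cap X'=W\smallsetminus(W_0\cup W_\infty)$ of dimension $i$, together with the excisions identifying this cohomology with that of the two pairs. The main obstacle I anticipate is the canonicity of the lift: the long exact sequence guarantees existence but not uniqueness, the ambiguity lying in the image of $H^{2i}(V\smallsetminus\Delta,A)$ in $H^{2i}_\Nori(V,A)(i)$. This indeterminacy is harmless for the perfect-pairing conclusion, since perfectness only depends on the image of $q$ under the forgetful map to $H^{2i}_\Delta(V)(i)\cong\unit$; but to pin $q$ down canonically one should construct the lift from an explicit chain-level geometric map of cones, where the very good pair hypothesis supplies the required rigidity.
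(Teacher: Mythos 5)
Your geometric strategy is exactly the paper's: use K\"unneth to reduce to $H^{2i}_\Nori(V,A)$ with $V=X\times X'$ and $A=X\times Y'\cup Y\times X'$, observe that the diagonal $\Delta$ of $W\smallsetminus(W_0\cup W_\infty)$ is closed in $V$ and disjoint from $A$, and produce a class from $H^{2i}_\Delta(V)\cong\unit(-i)$ via Lemma~\ref{PD}(3). But the formal step that transports this class into $H^{2i}_\Nori(V,A)$ is off: you wrote the triangle as
\[
R(V\smallsetminus\Delta,A)\to R(V,A)\to R_\Delta(V)\to R(V\smallsetminus\Delta,A)[1],
\]
whereas the octahedron for $A\hookrightarrow V\smallsetminus\Delta\hookrightarrow V$, after applying the \emph{contravariant} functor $R$, gives
\[
R_\Delta(V)\to R(V,A)\to R(V\smallsetminus\Delta,A)\to R_\Delta(V)[1].
\]
(Compare the triangle $R(X,Y)\to R(X,Z)\to R(Y,Z)$ used in the proof of Corollary~\ref{cor1.6}.) Consequently, the long exact sequence runs $H^{2i}_\Delta(V)\to H^{2i}_\Nori(V,A)\to H^{2i}_\Nori(V\smallsetminus\Delta,A)\to\cdots$, so there is a \emph{direct, canonical} morphism $H^{2i}_\Delta(V)\to H^{2i}_\Nori(V,A)$ induced by the inclusion of pairs $(V,A)\hookrightarrow(V,V\smallsetminus\Delta)$ (this is precisely the functoriality-of-support step in the paper). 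No lift across a connecting morphism is required, the vanishing argument via faithfulness of $H^*$ is superfluous, and the entire paragraph worrying about non-canonicity of the lift is moot: $q$ is simply the $(i)$-twist of the composite $\unit\cong H^{2i}_\Delta(V)(i)\to H^{2i}_\Nori(V,A)(i)\cong H^i_\Nori(X,Y)\otimes H^i_\Nori(X',Y')(i)$. Your direct appeal to Lemma~\ref{PD}(3) inside $V=X\times X'$ (rather than passing through $\bar{\Delta}\subset W\times W$ as the paper does) is a harmless shortcut since $V$ is smooth and $\Delta$ is closed, smooth, and of pure codimension $i$ in it. With the triangle corrected, your argument reduces to the paper's.
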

\begin{proof}
We follow Nori's construction. The two pairs are Poincar\'e dual to each
other in singular cohomology. (This is easily seen
by computing with sheaves on $W$ and the duality between $j_*$ and $j_!$).
This implies that they are both good pairs.
Hence
\begin{gather*}
H^i_\Nori(X,Y)\tensor 
H^i_\Nori(X',Y')
\to H^{2i}_\Nori( X\times X', X\times Y'\cup Y\times X')
\end{gather*}
is an isomorphism.
Let $\Delta=\Delta( W\ohne (W_0\cup W_\infty))$ via the diagonal map.
Note that
\[ X\times Y'\cup X'\times Y\subset X\times X'\ohne \Delta. \]
Hence by functoriality and the definition of cohomology with support, there is a map
\[
H^{2i}_\Nori( X\times X', X\times Y'\cup Y\times X')\leftarrow
H^{2i}_{\Delta}( X\times X').
\]
Again, by functoriality, there is a map
\[  H^{2i}_{\Delta}( X\times X')\leftarrow  H^{2i}_{\bar{\Delta}}(W\times W)\]
with $\bar{\Delta}=\Delta(W)$. By Lemma \ref{PD} it is isomorphic
to $\unit(-i)$. The map
$q$ is defined by twisting the composition by $(i)$. 
The dual of this map realizes Poincar\'e duality, hence it is a perfect
pairing.
\end{proof}

\begin{thm}[Nori] 
$\MMN$ is rigid, hence a neutral Tannakian category. Its Tannaka dual is given by $G_{\rm mot}=\Spec(A( D_\Nori,H^*))$. 
\end{thm}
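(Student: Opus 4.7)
My plan is to apply the rigidity criterion established in Appendix \ref{sectC}, which reduces rigidity of an abelian $\Q$-linear tensor category with a faithful fiber functor to $\Qmod$ to the existence of duals for a family of \emph{generators} (meaning objects such that every object is a subquotient of a finite direct sum of them). The work thus divides into identifying a convenient family of generators of $\MMN$ and producing duals for each of them inside $\MMN$.

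For the first step I would use Corollary \ref{cor1.7}: every object of $\MMN^\eff$ is a subquotient of a direct sum of motives $H^i_\Nori(X,Y)$ with $X=W\ohne W_\infty$, $Y=W_0\ohne(W_0\cap W_\infty)$, where $W$ is smooth projective and $W_0\cup W_\infty$ is a normal crossings divisor. By Remark \ref{rem1.9}, after enlarging $W_0$ one may assume simultaneously that $(X,Y)$ \emph{and} its Poincar\'e dual pair $(X',Y')=(W\ohne W_0,\,W_\infty\ohne(W_0\cap W_\infty))$ are very good. Localizing with respect to $\unit(-1)$, the same family (closed under Tate twists) generates all of $\MMN$. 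For the second step, Lemma \ref{dualexist} supplies, for each such generator, a morphism
\[ q:\unit\to H^i_\Nori(X,Y)\tensor H^i_\Nori(X',Y')(i) \]
whose image under $H^*$ is the classical Poincar\'e--Lefschetz pairing, hence perfect. Because $H^*$ is a faithful exact $\Q$-linear fiber functor, a morphism whose image under $H^*$ is an isomorphism is itself an isomorphism; therefore $q$ exhibits $H^i_\Nori(X',Y')(i)$ as the dual of $H^i_\Nori(X,Y)$ inside $\MMN$. Applying the criterion of Appendix \ref{sectC} now gives that $\MMN$ is rigid.

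Once rigidity is in hand, the Tannakian identification is formal. By construction $\MMN=\Ch(D_\Nori,H^*)$ is the category of finite-dimensional $\Q$-comodules over the coalgebra $A(D_\Nori,H^*)$, and $H^*$ corresponds to the forgetful functor. Rigidity together with standard Tannakian formalism equips $A(D_\Nori,H^*)$ with an antipode, making it a commutative Hopf algebra, so $\Gmot:=\Spec A(D_\Nori,H^*)$ is an affine group scheme, and $H^*$ factors through an equivalence between $\MMN$ and finite-dimensional $\Gmot$-representations. The main obstacle I expect is the first step, namely ensuring that the generators produced by Corollary \ref{cor1.7} admit duals \emph{within the generating family}; this is precisely what the ``very good on both sides'' reduction of Remark \ref{rem1.9} is designed to guarantee, and it is where Beilinson's basic lemma enters (through the comparison of diagram categories in Corollary \ref{cor1.6}) as an essential geometric input.
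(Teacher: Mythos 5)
Your proposal follows the paper's proof essentially verbatim: Corollary \ref{cor1.7} (refined by Remark \ref{rem1.9}) produces the generating family, Lemma \ref{dualexist} supplies a morphism $q$ whose image under $H^*$ is a perfect pairing, and Proposition \ref{rigid} together with Theorem \ref{tann} closes the argument. One small imprecision: the sentence claiming ``a morphism whose image under $H^*$ is an isomorphism is itself an isomorphism; therefore $q$ exhibits \dots as the dual'' is misplaced --- $H^*(q)$ maps the one-dimensional $\Q$ into a tensor product and is not an isomorphism; what Lemma \ref{dualexist} gives is that the induced bilinear form is perfect, and upgrading this to categorical duality is precisely the content of Proposition \ref{rigid}, so the argument should simply invoke that criterion rather than assert duality beforehand.
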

\begin{proof}
By  Corollary \ref{cor1.7}
every object of
$\MMN$ is subquotient of  $M=H^i_\Nori(X,Y)(j)$ for a good pair $(X,Y,i)$
of the particular form occurring in Lemma \ref{dualexist}. By this Lemma they all admit a perfect pairing. 

By Proposition \ref{rigid}, the category $\MMN$ is neutral Tannakian. The Hopf algebra of its 
Tannaka dual agrees with Nori's algebra by Theorem \ref{tann}. 
\end{proof}


\section{Main Theorem}\label{sect2}

We describe the strategy of proof for Theorem 6 in \cite{Ko}:
The period algebra is given by the comparison of Nori motives
with respect to singular and de Rham cohomology.
The argument seems to be formal, so we do it abstractly.
Let $D$ be a graded diagram with commutative product structure (see Definition \ref{graded}), $T_1,T_2:D\to \Qmod$ two representations.

\begin{defn}
Let $A_1=A(D,T_1)$, $A_2=A(D,T_2)$. Put
\[ 
A_{1,2}=\colim_F \Hom(T_1|_F,T_2|_F)^\vee
\]
where $^\vee$ denotes the $\Q$-dual and $F$ runs through all
finite subdiagrams of $D$. 
\end{defn}

\begin{lemma}
$A_{1,2}$ is a commutative ring with multiplication induced by
the tensor structure of the diagram category.
The operation
\[ 
\End(T_1|_F)\times \Hom(T_1|_F,T_2|_F)\to \Hom(T_1|_F,T_2|_F)
\]
induces a compatible comultiplication
\[ 
A_1\tensor A_{1,2}\leftarrow A_{1,2}.
\]
\end{lemma}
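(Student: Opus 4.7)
The plan is to treat both assertions as dualizations of natural bilinear operations on the finite-dimensional vector spaces $\End(T_1|_F)$ and $\Hom(T_1|_F, T_2|_F)$, followed by passage to the colimit over finite subdiagrams $F\subseteq D$ directed by inclusion. This parallels Nori's construction of the bialgebra $A(D,T_1) = A_1$ (carried out in Appendix \ref{ssectB.3}), now in the ``bimodule'' setting.

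For the commutative ring structure, I would fix finite subdiagrams $F,F'\subseteq D$ and use the graded commutative product on $D$ to produce a finite subdiagram $F\cdot F'\subseteq D$ containing the products of all vertices and edges. Because $T_1, T_2$ are graded representations, the identifications $T_j(v\cdot v')\isom T_j(v)\otimes T_j(v')$ yield, at the generator level, a natural swap
\[
(T_1(v)\otimes T_2(v)^\vee)\otimes (T_1(v')\otimes T_2(v')^\vee) \isom T_1(v\cdot v')\otimes T_2(v\cdot v')^\vee.
\]
Summing over vertices and descending through the natural surjections $\bigoplus_{v\in F} T_1(v)\otimes T_2(v)^\vee \twoheadrightarrow \Hom(T_1|_F, T_2|_F)^\vee$ (obtained by dualizing the inclusion $\Hom(T_1|_F, T_2|_F)\hookrightarrow \bigoplus_v \Hom(T_1(v), T_2(v))$) produces local products $A_{1,2}(F)\otimes A_{1,2}(F')\to A_{1,2}(F\cdot F')$, which are compatible with restriction and therefore glue to a multiplication $A_{1,2}\otimes A_{1,2}\to A_{1,2}$. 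Commutativity, associativity and unitality descend from the structural isomorphisms $\alpha$, $\beta$, $u$ that witness the commutative multiplicative structure on $D$ and were already invoked in the proof of the previous theorem.

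For the comultiplication, the composition pairing $\End(T_1|_F)\otimes \Hom(T_1|_F, T_2|_F)\to \Hom(T_1|_F, T_2|_F)$, $(\alpha,\phi)\mapsto \phi\circ\alpha$, is bilinear between finite-dimensional $\Q$-vector spaces and manifestly commutes with restriction along $F\subseteq F'$. Its transpose $\Hom(T_1|_F, T_2|_F)^\vee \to \End(T_1|_F)^\vee \otimes \Hom(T_1|_F, T_2|_F)^\vee$ therefore assembles into the required map $A_{1,2}\to A_1\otimes A_{1,2}$. Compatibility with the multiplications, i.e.\ the claim that $A_{1,2}$ is an $A_1$-comodule algebra, is the transpose of the interchange identity
\[
(\phi_1\boxtimes\phi_2)\circ(\alpha_1\boxtimes\alpha_2) = (\phi_1\circ\alpha_1)\boxtimes (\phi_2\circ\alpha_2),
\]
which holds in finite-dimensional linear algebra; coassociativity and the counit law descend analogously from associativity and unitality of composition.

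The main obstacle is bookkeeping rather than any substantive difficulty: one must check that the generator-level swap respects the naturality relations defining $A_{1,2}(F)$, that the subdiagrams $F\cdot F'$ are cofinal among finite subdiagrams large enough to receive the product, and that all bilinear pairings constructed above are strictly compatible with the restriction injections $\Hom(T_1|_F,T_2|_F)^\vee\hookrightarrow \Hom(T_1|_{F'},T_2|_{F'})^\vee$. Granted these compatibilities, every ring-theoretic and comodule-algebra axiom reduces to a standard finite-dimensional identity, following precisely the pattern used to equip $A_1$ itself with its bialgebra structure in Appendix \ref{ssectB.3}.
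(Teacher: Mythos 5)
Your proposal is correct and takes essentially the same route as the paper: the paper's proof is a one-line pointer to ``go through the proof of Proposition \ref{tensorcat}, replacing $\End(T|_F)$ by $\Hom(T_1|_F,T_2|_F)$ in the appropriate places,'' and your write-up spells out exactly that substitution — constructing $\mu^*_F$ (resp.\ its transpose) on the level of finite subdiagrams, verifying naturality with respect to edges of $D$, using the structure morphisms $\alpha,\beta,u$ for commutativity, associativity and unitality, and passing to the colimit — together with the comultiplication obtained by dualizing precomposition $\End(T_1|_F)\otimes\Hom(T_1|_F,T_2|_F)\to\Hom(T_1|_F,T_2|_F)$.
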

\begin{proof}
The hard part is the existence of the multiplication. 
This follows by going through the proof of Proposition \ref{tensorcat},
replacing $\End(T|_{F})$ by $\Hom(T_1|_F,T_2|_F)$ in the appropriate places.
\end{proof}

\begin{ex} For $D_\Nori$, $T_1=H^*_{\dR}$ (de Rham cohomology) and $T_2=H^*$ (singular cohomology) 
this is going to induce the operation of the motivic Galois group $G_{\rm mot}$ on the torsor $X=\Spec A_{1,2}$.
\end{ex}

\begin{defn}\label{periodsabstract} We define the space of \emph{periods} $P_{1,2}$ as the $\Q$-vector
space generated by symbols
\[ 
(p,\omega,\gamma)
\]
where $p$ is a vertex of $D$, $\omega\in T_1(p)$, $\gamma\in T_2(p)^\vee$ with the following relations:
\begin{enumerate}
\item linearity in $\omega$, $\gamma$;
\item (change of variables) If $f:p\to p'$ is an edge in $D$, $\gamma\in T_2(p')^\vee$, $\omega\in T_1(p)$, then
\[ 
(p',T_1(f) (\omega), \gamma)=(p,\omega,T_2(f)^\vee(\gamma)).
\]
\end{enumerate}
\end{defn}

\begin{prop} $P_{1,2}$ is a commutative $\Q$-algebra with multiplication
given on generators by 
\[ (p,\omega,\gamma)(p',\omega',\gamma')=(p\times p',\omega \otimes \omega',\gamma\otimes\gamma')\]
\end{prop}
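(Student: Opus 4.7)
The plan is to verify that the formula on generators descends to a well-defined, commutative, associative multiplication with unit on $P_{1,2}$. The main point is well-definedness: the product has to respect bilinearity (trivial) and, crucially, the change-of-variables relation. Once well-definedness is established, commutativity and associativity will follow from the commutativity/associativity isomorphisms $\alpha,\beta$ of the product structure on $D$, translated via the graded representation property of $T_1$ and $T_2$.

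First I would check that the expression $(p\times p',\omega\tensor\omega',\gamma\tensor\gamma')$ makes sense: since $D$ has a commutative product structure, $p\times p'$ is again a vertex, and since $T_i$ is graded, the Künneth-type isomorphism $T_i(p\times p')\isom T_i(p)\tensor T_i(p')$ is part of the data, so $\omega\tensor\omega'\in T_1(p\times p')$ and $\gamma\tensor\gamma'\in T_2(p\times p')^\vee$ after using $(V\tensor W)^\vee\isom V^\vee\tensor W^\vee$. Bilinearity of the formula in $(\omega,\gamma)$ separately gives descent through the linearity relations. To check the change-of-variables relation in the first factor, let $f:p\to q$ be an edge in $D$. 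The product structure supplies the edge $f\times\id_{p'}:p\times p'\to q\times p'$, and the fact that $T_1,T_2$ are graded representations (Definition \ref{graded}) gives
\[ T_1(f\times\id_{p'})=T_1(f)\tensor\id,\qquad T_2(f\times\id_{p'})=T_2(f)\tensor\id\]
under the Künneth identifications. Applying change of variables for $f\times\id_{p'}$ on a product symbol then turns $(q,T_1(f)\omega,\gamma)\cdot(p',\omega',\gamma')$ into $(p,\omega,T_2(f)^\vee\gamma)\cdot(p',\omega',\gamma')$; the second factor is handled symmetrically.

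Commutativity follows from the commutativity constraint $\alpha:p\times p'\to p'\times p$: because $T_1(\alpha)$ and $T_2(\alpha)$ are the tensor flips (grading-graded with the appropriate Koszul signs that the abstract axioms force), applying change of variables along $\alpha$ identifies $(p\times p',\omega\tensor\omega',\gamma\tensor\gamma')$ with $(p'\times p,\omega'\tensor\omega,\gamma'\tensor\gamma)$. Associativity is the same argument applied to $\beta$, and the unit is provided by the symbol $(u,1,1)$ where $u$ is the unit vertex of the product structure and $1\in T_i(u)=\Q$; the unit axiom translates precisely the fact that $T_i$ sends the unit isomorphism of $D$ to the canonical isomorphism $\Q\tensor V\isom V$.

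The main obstacle I expect is not the formal manipulation but bookkeeping the sign conventions coming from the grading: as noted in the proof that $\MMN^\eff$ is a tensor category, one has to fix signs so that the Künneth morphisms interact correctly with the commutativity constraint $\alpha$ and, in concrete examples, with coboundary edges. Once the same sign convention chosen in Proposition \ref{tensorcat} is carried over verbatim here (which is why this proof is essentially parallel to that one, with $\End(T|_F)$ replaced by $\Hom(T_1|_F,T_2|_F)$), everything dualizes cleanly to the statement on generators $(p,\omega,\gamma)$. In fact, conceptually one can observe that $P_{1,2}$ is the $\Q$-vector space $\colim_F (T_1|_F\tensor T_2|_F^\vee)$ summed over vertices, which by the same coend-type manipulation as in the previous lemma is dual to $A_{1,2}$; transporting the commutative ring structure from $A_{1,2}$ to $P_{1,2}$ would give the cleanest proof.
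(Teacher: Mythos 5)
Your proposal is correct and takes essentially the same approach as the paper, which disposes of the claim in a single line (``It is obvious that the relations of $P_{1,2}$ are respected by the formula''); you simply write out the verification the paper treats as routine, correctly using properties (2) and (3) of a graded representation (and the cancellation of the $(-1)^{|\gamma|}$ sign on both slots via bilinearity) to see that change of variables is respected in each tensor factor.
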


\begin{proof}
 It is obvious that the relations of $P_{1,2}$ are respected by the formula.
\end{proof}

There is a natural transformation
\[ \Psi:P_{1,2} \to A_{1,2}\]
defined as follows: let $(p,\omega,\gamma)\in P_{1,2}$. Let $F$ be
a finite diagram containing $p$.
Then 
\[ \Psi(p,\omega,\gamma)\in A_{1,2}(F)=\Hom(T_1|_F,T_2|_F)^\vee,\]
is the map
\[ \Hom(T_1|_F,T_2|_F)\to \Q\]
which maps $\phi\in \Hom(T_1|_F,T_2|_F)$ to $\gamma(\phi(p)(\omega))$.
Clearly this is independent of $F$ and respects relations of $P_{1,2}$.

\begin{thm}\label{thm2.6}
The above map
\[ 
\Psi:P_{1,2}\to A_{1,2}
\]
is an isomorphism of $\Q$-algebras.
\end{thm}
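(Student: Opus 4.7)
The plan is to interpret both $P_{1,2}$ and $A_{1,2}$ as filtered colimits of finite-dimensional vector spaces indexed by the finite subdiagrams $F\subset D$, and to verify the desired isomorphism at each finite level via ordinary linear duality.

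First I would observe that $P_{1,2}$ itself is the filtered colimit of subspaces $P_{1,2}(F)$ spanned by triples $(p,\omega,\gamma)$ with $p$ a vertex of $F$, modulo only those change-of-variables relations coming from edges of $F$. This is because every generator involves a single vertex and every relation involves a single edge, so both live in some finite $F$. By definition, $A_{1,2}$ is also the filtered colimit of $A_{1,2}(F)=\Hom(T_1|_F,T_2|_F)^\vee$, and $\Psi$ respects these presentations. Hence it suffices to prove that the induced map $\Psi_F\colon P_{1,2}(F)\to A_{1,2}(F)$ is an isomorphism for every finite $F$.

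Fix such an $F$. For each vertex $p$ write $V_p=T_1(p)\otimes T_2(p)^\vee$ and use the canonical perfect pairing
\[
V_p\times \Hom(T_1(p),T_2(p))\longrightarrow \Q,\qquad (\omega\otimes\gamma,\phi)\longmapsto \gamma(\phi(\omega)).
\]
Summing over $p\in F$ gives a perfect pairing between the finite-dimensional spaces $\bigoplus_{p\in F}V_p$ and $\bigoplus_{p\in F}\Hom(T_1(p),T_2(p))$. Now $P_{1,2}(F)$ is the quotient of $\bigoplus_{p}V_p$ by the subspace $W_F$ spanned by the elements
\[
T_1(f)(\omega)\otimes\gamma\;-\;\omega\otimes T_2(f)^\vee(\gamma)
\]
for edges $f\colon p\to p'$ in $F$, $\omega\in T_1(p)$, $\gamma\in T_2(p')^\vee$. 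A direct computation shows that pairing such a generator against a tuple $\phi=(\phi_q)_{q\in F}$ yields $\gamma\bigl(\phi_{p'}T_1(f)\omega-T_2(f)\phi_p\omega\bigr)$, which vanishes for all choices of $\omega,\gamma,f$ if and only if $\phi$ is a natural transformation $T_1|_F\to T_2|_F$. Thus $W_F^\perp=\Hom(T_1|_F,T_2|_F)$, and finite-dimensional linear duality gives $P_{1,2}(F)\cong \Hom(T_1|_F,T_2|_F)^\vee=A_{1,2}(F)$. Unwinding definitions, this isomorphism is precisely $\Psi_F$.

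Passing to the colimit over $F$, $\Psi$ becomes a $\Q$-linear isomorphism. Multiplicativity is then automatic: the algebra structures on both sides are built from the commutative product structure of $D$ (tensor products of vertices, $\omega$'s and $\gamma$'s), and $\Psi$ sends generators to the functionals obtained by evaluating tensor products of natural transformations, which is compatible with the multiplication on $A_{1,2}$ constructed in Lemma~2.2. The main obstacle is purely bookkeeping: one has to set up the dual pairing so that the change-of-variables relations defining $P_{1,2}(F)$ correspond term-for-term with the naturality squares cutting out $\Hom(T_1|_F,T_2|_F)$ inside the product. Once the correct pairing is in place the rest of the argument is elementary.
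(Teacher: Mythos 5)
Your proof is correct and follows essentially the same strategy as the paper: reduce to a finite subdiagram $F$ by compatibility with filtered colimits, then invoke finite-dimensional linear duality to match the quotient $P_{1,2}(F)$ of $\bigoplus_p T_1(p)\otimes T_2(p)^\vee$ by the edge relations with the dual of the subspace $\Hom(T_1|_F,T_2|_F)\subset\prod_p\Hom(T_1(p),T_2(p))$. In fact your write-up is cleaner than the paper's: the paper's proof tersely calls $P_{1,2}$ a ``subspace'' of $\prod_p T_1(p)\otimes T_2(p)^\vee$, which is a slip, since $P_{1,2}(F)$ is a quotient and the duality $W_F^\perp=\Hom(T_1|_F,T_2|_F)$ you spell out is the precise mechanism that makes the two sides coincide.
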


\begin{proof}
For a finite subdiagram $F\subset D$ let $P_{1,2}(F)$ be the space
of periods. By definition
$P=\colim_F P(F)$. The statement is compatible with
these direct limits. Hence without loss of generality $D=F$ is finite.

By definition $P_{1,2}$ is the subspace of
\[ \prod_{p\in D} T_1(p)\otimes T_2(p)^\vee\]
of elements satisfying the relations induced by $D$. By definition
$A_{1,2}(T)$ is the subspace of
\[ \prod_{p\in D}\Hom(T_1(p),T_2(p))^\vee\]
of elements satisfying the relations induced by $D$. As all
$T_i(p)$ are finite dimensional, this is the same thing.

The compatibility with coproducts is easy to see.
\end{proof}

\begin{rem}This works for coefficients in Dedekind rings as long as 
the representations take values in projective modules of finite type. The theorem is also of
interest in the case $T_1=T_2$. It then gives an explicit
description of Nori's coalgebra by generators and relations.
\end{rem}

Recall that de Rham cohomology of a smooth algebraic variety is defined as hypercohomology 
of the complex of differential forms. It is possible to extend
the definition naturally not only to singular varieties, but to pairs of varieties.
A possible reference is \cite{HuDiss} Section 7.

\begin{defn}\label{kontperiods}
The space of {\em effective formal periods} $P^+$ is defined as the $\Q$-vector space generated
by symbols $(X,D,\omega,\gamma)$, where 
$X$ is an algebraic variety over $\Q$, $D\subset X$ a subvariety, 
$\omega\in H^d_{\mathrm{dR}}(X,D)$, $\gamma\in H_d(X(\C),D(\C),\Q)$ with 
relations
\begin{enumerate}
\item linearity in $\omega$ and $\gamma$;
\item for every $f:X\to X'$ with $f(D)\subset D'$
\[ (X,D,f^*\omega',\gamma)=(X',D',\omega',f_*\gamma)\]
\item for every triple $Z\subset Y\subset X$
\[ (Y,Z,\omega, \partial \gamma)= (X,Y, \delta \omega,\gamma)\]
with $\partial$ the connecting morphism for relative singular homology and $\delta$ 
the connecting morphism for relative de Rham cohomology.
\end{enumerate}
$P^+$ is turned into an algebra via
\[ (X,D,\omega,\gamma) (X',D',\omega',\gamma')=(X\times X',D\times X'\cup D'\times X,\omega\wedge\omega', \gamma \times \gamma')\]
The space of {\em formal periods} is the localization $P$ of $P^+$ with respect to
the period of $(\G_m,\{1\},\frac{dX}{X}, S^1)$ where $S^1$ is the unit circle in $\C^*$.
\end{defn}

\begin{rem}\label{rem2.9}This is modeled after
Kontsevich \cite{Ko} Definition 20 but does not agree with it.
He restricts to smooth $X$ and $D$ a divisor with normal crossings.

The above definition uses effective pairs $(X,D,d)$ in the sense of Definition 
\ref{goodpairs}. By Corollary \ref{cor1.6},
it is clear that it suffices to take good or even very good pairs. 
By Corollary \ref{cor1.7}, it then suffices even to take generators
of Kontsevich's form.
However, we do not know, if these pairs generate all relations. Also, he only imposes relation (3) in a special case.  

Moreover, Kontsevich considers differential forms of top degree rather
than cohomology classes. This change is harmless. They are automatically closed.
He imposes Stokes's formula as an additional
relation, hence this amounts to considering cohomology classes.
\end{rem}

\begin{thm}\label{maintheorem}
Let $D$ be the diagram of pairs (see Definition \ref{goodpairs}).
Let
\begin{align*}
&T_1=H^*_\dR:D\to \Qmod\hspace{3ex} &\text{(de Rham cohomology)} \\
&T_2=H^*:D\to\Qmod \hspace{3ex} &\text{(singular cohomology)}
\end{align*}
then the space of formal periods $P$ (Definition \ref{periodsabstract})
agrees with the comparison algebra $A_{1,2}$ (Definition \ref{kontperiods}).
\[
P=P_{1,2}=A_{1,2}\ . 
\]
\end{thm}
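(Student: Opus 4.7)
The second equality $P_{1,2}=A_{1,2}$ is Theorem \ref{thm2.6}, so the content of the statement is the identification $P=P_{1,2}$. The plan is to exhibit a natural $\Q$-algebra isomorphism between them by matching generators and relations term-by-term, after observing that the two definitions are in fact formally the same object written in two notations.

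First I would work at the effective level. Define a $\Q$-linear map $\Phi:P^+\to P_{1,2}$ on generators by
\[ (X,D,\omega,\gamma)\longmapsto \bigl((X,D,d),\,\omega,\,\gamma\bigr),\]
where $d$ is the common degree of $\omega\in H^d_\dR(X,D)$ and $\gamma\in H_d(X(\C),D(\C),\Q)$, and where we use the canonical identification $H_d(X(\C),D(\C),\Q)=H^d(X(\C),D(\C),\Q)^\vee=T_2((X,D,d))^\vee$. To check $\Phi$ is well defined, I would verify the three relations of Definition \ref{kontperiods} against the single ``change of variables'' relation in Definition \ref{periodsabstract}: linearity is trivial; for a morphism $f:X\to X'$ with $f(D)\subset D'$, the edge $f^*:(X',D',d)\to(X,D,d)$ of the pair diagram satisfies $T_1(f^*)=f^*$ on de Rham and $T_2(f^*)^\vee=f_*$ on singular homology, so the change-of-variables relation is exactly relation (2) of $P^+$; for a triple $Z\subset Y\subset X$, the coboundary edge $\partial:(Y,Z,i)\to(X,Y,i+1)$ has $T_1(\partial)=\delta$ (the de Rham connecting map) and $T_2(\partial)^\vee$ is the homological boundary, yielding exactly relation (3) of $P^+$.

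Next I would check that $\Phi$ is a ring homomorphism. The multiplicative structure on the diagram of pairs gives $(X,D,d)\times(X',D',d')=(X\times X',X\times D'\cup D\times X',d+d')$; under the K\"unneth isomorphism the class $\omega\otimes\omega'\in T_1((X,D,d))\otimes T_1((X',D',d'))$ pairs with the tensor product of the vertices to the cross product $\omega\wedge\omega'$ in de Rham, and similarly $\gamma\otimes\gamma'$ corresponds to $\gamma\times\gamma'$ in singular homology. Thus $\Phi$ sends the product formula of Definition \ref{kontperiods} to that of Proposition preceding Theorem \ref{thm2.6}. For injectivity/surjectivity of $\Phi$ before localization, I would construct the inverse on generators: any triple $((X,Y,i),\omega,\gamma)$ of $P_{1,2}^{\eff}$ has exactly the shape $(X,Y,\omega,\gamma)\in P^+$, and the two sets of relations are in bijection as above, so the map is an isomorphism $P^+\xrightarrow{\sim}P_{1,2}^{\eff}$, where $P_{1,2}^{\eff}$ denotes the space built from the effective diagram $D^\eff$.

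Finally I would deal with the localization. By Definition \ref{goodpairs}, $D$ is obtained from $D^\eff$ by localizing at $(\G_m,\{1\},1)$, and under the identification $P_{1,2}^{\eff}\cong P^+$ this diagram-theoretic localization translates to inverting the single element
\[ \bigl((\G_m,\{1\},1),\,\tfrac{dX}{X},\,S^1\bigr)\in P^+,\]
since $\tfrac{dX}{X}$ and $S^1$ are bases of $H^1_\dR(\G_m)$ and $H_1(\G_m(\C),\Q)$ respectively. This is precisely the element inverted in passing from $P^+$ to $P$, so $\Phi$ descends to an isomorphism $P\xrightarrow{\sim}P_{1,2}$, and combining with Theorem \ref{thm2.6} gives $P=P_{1,2}=A_{1,2}$. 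The main conceptual point to verify, and the only nontrivial one, is that diagram localization matches localization of the period ring at that one generator; everything else is a tautological comparison of how edges in the diagram of pairs (pullback and coboundary) act on the two representations $H^*_\dR$ and $H^*$.
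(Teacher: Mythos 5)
Your proposal follows essentially the same route as the paper's own proof: reduce to the effective case, identify $P^+$ with $P_{1,2}^{\eff}$ via the duality $H_d(X(\C),D(\C),\Q)\cong H^d(X(\C),D(\C),\Q)^\vee$, invoke Theorem \ref{thm2.6}, and then pass to the localization using the mechanism of Proposition \ref{localcat}. The paper dispatches the identification $P^+=P_{1,2}^{\eff}$ with the words ``by definition,'' whereas you spell out the term-by-term matching of relations (2) and (3) with the change-of-variables relation for the $f^*$ and $\partial$ edges and check multiplicativity explicitly; that is a welcome expansion of the same argument rather than a different one, and your flagging of the localization compatibility as the one genuinely nontrivial step is exactly where the paper appeals to ``the analogue of Proposition \ref{localcat}.''
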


\begin{proof}
We first restrict to the effective situation.
Let $P^\eff_{1,2}$ and $A^\eff_{1,2}$ be the algebras for this diagram.
Note that $H_d(X(\C),D(\C),\Q)$ is dual to $H^d(X(\C),D(\C),\Q)$. Hence
by definition $P^+=P^\eff_{1,2}$. 

Recall that there is a natural comparison isomorphism between
singular cohomology and de Rham cohomology (see e.g. \cite{HuDiss} \S 8) over
$\C$. Hence
by Theorem \ref{thm2.6} $P^\eff_{1,2}=A^\eff_{1,2}$ and $P_{1,2}=A_{1,2}$.
By localization and the analogue of Proposition \ref{localcat}, this
implies $P=A_{1,2}$.
\end{proof}

\begin{cor}\label{cor3.4} 
The algebra of formal periods $P$ remains unchanged when we restrict
in Definition \ref{kontperiods} to $(X,D,\omega,\gamma)$ with
$X$ affine of dimension $d$, $D$ of dimension $d-1$ and 
$X\ohne D$ smooth, $\omega\in H^d_\dR(X,D)$, $\gamma\in H_d(X(\C),D(\C),\Q)$.
\end{cor}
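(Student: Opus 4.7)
The plan is to run the proof of Theorem~\ref{maintheorem} with the diagram $\tilde{D}$ of very good pairs in place of the full diagram $D$ of pairs. Let $\tilde{P}$ denote the $\Q$-algebra defined exactly as in Definition~\ref{kontperiods} but generated only by symbols $(X,D,\omega,\gamma)$ with $(X,D,d)$ very good in the sense of the corollary, subject to the relations (1)--(3) restricted to those where every pair involved is very good. There is a tautological map $\tilde{P}\to P$, and I want to show it is an isomorphism.

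First I would observe that the proof of Theorem~\ref{maintheorem} goes through verbatim for $\tilde{D}$: both realizations $H^*_\dR$ and $H^*$ restrict to $\tilde{D}$, and $\tilde{D}$ carries the same graded commutative product structure as $D$ (products of affines are affine, and the dimension condition on $(X,Y,i)$ is additive under the product of pairs). Theorem~\ref{thm2.6} then gives an isomorphism
\[
\tilde{P}=P_{1,2}(\tilde{D})\cong A_{1,2}(\tilde{D}),
\]
where $A_{1,2}(\tilde{D})=\colim_F \Hom(H^*_\dR|_F,H^*|_F)^\vee$ over finite subdiagrams $F\subset\tilde{D}$. Similarly one gets $P = A_{1,2}(D)$ for the full diagram, and the obvious restriction gives a map $A_{1,2}(D)\to A_{1,2}(\tilde{D})$ compatible with $\tilde{P}\to P$.

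The key step is to identify $A_{1,2}(\tilde{D})$ with $A_{1,2}(D)$, the $A_{1,2}$-analogue of Corollary~\ref{cor1.6}. I would mimic the proof of that corollary: apply Proposition~\ref{complexrepr} to the representation of $\tilde{D}$ in its diagram category to obtain a functor $R:C_b(\ZVar)\to D^b(\Ch(\tilde{D}))$, and use it to represent each pair $(X,Y,i)\in D$ by $H^i(R(X,Y))$. The same construction applies to $H^*_\dR$ (it is equally a cohomological representation of $\tilde{D}$), and the two constructions are canonically compatible because $R$ is intrinsic to the diagram. This produces an inverse $A_{1,2}(D)\to A_{1,2}(\tilde{D})$, simultaneously providing an inverse $P\to\tilde{P}$ sending the class of $(X,Y,\omega,\gamma)$ to the class of its representation in terms of very good pairs.

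The main obstacle is the bookkeeping: one must check that the natural transformation $R$ is compatible with both fiber functors at once (so that it really induces a well-defined map on $A_{1,2}$), and that the resulting inverse $P\to\tilde{P}$ respects relations (2) and (3) of Definition~\ref{kontperiods} in the very good setting. Once this is verified, the chain
\[
P = A_{1,2}(D) = A_{1,2}(\tilde{D}) = \tilde{P}
\]
proves the effective statement, and localization at the Lefschetz period $(\G_m,\{1\},dX/X,S^1)$ (which is itself very good) extends it from $P^+$ to $P$.
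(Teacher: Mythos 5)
Your proof is correct and takes essentially the same approach as the paper: run Theorem~\ref{maintheorem} for the diagram $\tilde{D}$ of very good pairs to get $\tilde{P}\cong A_{1,2}(\tilde{D})$, then invoke the $A_{1,2}$-analogue of Corollary~\ref{cor1.6} to conclude $A_{1,2}(\tilde{D})=A_{1,2}(D)=P$. The paper states this more tersely, but the two arguments are the same.
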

\begin{proof}
By the same proof, Theorem \ref{maintheorem} holds also for the diagram $\tilde{D}$ of very good pairs.
By the analogue of Corollary \ref{cor1.6}, the comparison 
algebra $A_{1,2}$ is the same for both diagrams. 
\end{proof}

All formal effective periods $(X,D,\omega,\gamma)$ can
be evaluated by ''integrating'' $\omega$ along $\gamma$. More precisely,
there is a natural pairing
\[ H^d_\dR(X,D)\times H_d(X(\C),D(\C))\to \C\]
This induces a ring homomorphism
\[ \ev: P\to \C\]
which maps $(\G_m,\{1\},dX/X,S^1)$ to $2\pi i$.
Numbers in the image of $\ev$ are called {\em Kontsevich-Zagier periods}.

\begin{cor}The algebra of Kontsevich-Zagier periods is generated by
$(2\pi i)^{-1}$ together with
periods of  $(X,D,\omega,\gamma)$ with $X$ smooth affine, $D$
a divisor with normal crossings, $\omega\in \Omega^d(X)$.
\end{cor}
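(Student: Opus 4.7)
The plan is to reduce the generators produced by Corollary \ref{cor3.4} to the claimed form by resolution of singularities followed by the long exact sequence of a pair, with an induction on $d=\dim X$. Applying $\ev$ to Corollary \ref{cor3.4}, the algebra of Kontsevich--Zagier periods is generated by $(2\pi i)^{-1}$ and by values $\ev(X,D,\omega,\gamma)$ with $X$ affine of dimension $d$, $D$ of dimension $d-1$, $X\ohne D$ smooth, $\omega\in H^d_{\dR}(X,D)$, and $\gamma\in H_d(X(\C),D(\C),\Q)$. I would prove by induction on $d$ that every such value is a $\Q$-polynomial in $(2\pi i)^{-1}$ and in periods of the asserted form, the base case $d=0$ being immediate.

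For the inductive step I would first apply Hironaka's theorem to $(X,D)$ to obtain a proper birational $\pi:\tilde X\to X$ with $\tilde X$ smooth quasi-projective, $\tilde D:=\pi^{-1}(D)_{\mathrm{red}}$ a normal crossings divisor, and $\pi$ an isomorphism over $X\ohne D$; excision/proper base change (as in the proof of Corollary \ref{cor1.7}) identifies the periods of $(X,D)$ and $(\tilde X,\tilde D)$. Since $\tilde X$ will generally not be affine, I would then apply the Jouanolou trick to produce a Zariski-locally trivial affine-space bundle $p:W\to\tilde X$ with $W$ affine, and set $E:=p^{-1}(\tilde D)$, which is again a normal crossings divisor. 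Homotopy invariance of both singular and algebraic de Rham cohomology along $p$ identifies periods on $(\tilde X,\tilde D)$ with periods on $(W,E)$, reducing the problem to generators $(W,E,\omega',\gamma')$ with $W$ smooth affine of dimension $d$ and $E\subset W$ a normal crossings divisor.

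The last step is to split $\omega'\in H^d_{\dR}(W,E)$ using the long exact sequence
\[ H^{d-1}_{\dR}(E)\xrightarrow{\delta} H^d_{\dR}(W,E)\to H^d_{\dR}(W)\to H^d_{\dR}(E). \]
Since $E$ is a closed subvariety of the affine $W$ of complex dimension $d-1$, Andreotti--Frankel gives $H^d_{\dR}(E)=0$, so the middle arrow is surjective; since $W$ is smooth affine of dimension $d$, the group $H^d_{\dR}(W)=\Omega^d(W)/d\Omega^{d-1}(W)$ is represented by honest top-degree algebraic forms. Thus I can write $\omega'=\omega_W^{\sharp}+\delta\omega_E$, with $\omega_W^{\sharp}$ a lift of some $\omega_W\in\Omega^d(W)$ and $\omega_E\in H^{d-1}_{\dR}(E)$. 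The first summand gives a period of the desired form, and relation (3) applied to $\emptyset\subset E\subset W$ rewrites the second as
\[ (W,E,\delta\omega_E,\gamma')=(E,\emptyset,\omega_E,\partial\gamma'), \]
a period on the variety $E$ of dimension $d-1$; applying Corollary \ref{cor3.4} to this symbol and then the inductive hypothesis closes the reduction.

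The hard part, I expect, will be orchestrating the three reductions — affineness of $X$, smoothness of $X$, and normal-crossings of $D$ — simultaneously, since Hironaka destroys affineness and Jouanolou only repairs it up to an affine-bundle pullback. One must verify that the pulled-back divisor remains a normal crossings divisor and that relative algebraic de Rham cohomology is preserved under both reductions. Once these technicalities are settled, the long exact sequence step is essentially formal (thanks to the Andreotti--Frankel vanishing on the affine $E$), and relation (3) from Definition \ref{kontperiods} is exactly what is needed to propagate the induction.
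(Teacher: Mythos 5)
Your route is genuinely different from the paper's. The paper goes through Corollary~\ref{cor1.7} (resolution to $W$ smooth projective, $W_0\cup W_\infty$ normal crossings), Remark~\ref{rem1.9} (arranging that $X'=W\setminus W_0$ is affine), and then Lemma~\ref{dualexist} (motivic Poincar\'e duality $H^d_\Nori(X,Y)\cong H^d_\Nori(X',Y')(d)^\vee$), so that the dual pair $(X',Y')$ is affine with NCD boundary \emph{in the same dimension $d$}, and the Tate twist $(d)$ is absorbed into the $(2\pi i)^{-1}$ generator. You instead propose resolution $\to$ Jouanolou $\to$ long exact sequence plus induction on the degree, with no appeal to duality.

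There is a genuine gap in the Jouanolou step, and it is exactly at the point you flag as a ``technicality to settle''. The Jouanolou replacement $p:W\to\tilde X$ necessarily raises the dimension: $\dim W=\dim\tilde X+n=d+n$ with $n\geq 1$ as soon as $\tilde X$ is not already affine. You nevertheless write ``since $W$ is smooth affine of dimension $d$, $H^d_{\dR}(W)=\Omega^d(W)/d\Omega^{d-1}(W)$ is represented by honest top-degree algebraic forms'', which is false after Jouanolou: a class in $H^d_\dR(W)$ with $d<\dim W$ is represented by a \emph{closed} $d$-form, not a form of top degree, and --- more seriously --- for $\dim W>d$ there is no canonical lift $\Omega^d(W)\to H^d_\dR(W,E)$, because $\Omega^d(E)\ne 0$ and the restriction $H^{d-1}_\dR(W)\to H^{d-1}_\dR(E)$ need not be surjective. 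So the symbol $(W,E,\omega,\gamma)$ with $\omega\in\Omega^d(W)$ is not even well-defined unless $\dim W=d$, and your ``first summand'' $(W,E,\omega_W^\sharp,\gamma')$ is a lift-dependent period in degree $d$ on a variety of dimension $d+n$. Feeding it back into the recursion does not decrease the degree, so the induction does not terminate for that term. This is precisely the problem the paper's duality step is designed to solve: after resolution one loses affineness, but the \emph{dual} pair $(X',Y')=(W\setminus W_0, W_\infty\setminus(W_0\cap W_\infty))$ can be taken affine \emph{without changing the dimension}, and the price is the Tate twist $(d)$, i.e.\ a power of $(2\pi i)^{-1}$, which the corollary already allows as a generator. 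The second half of your argument (vanishing of $H^d_\dR(E)$ on the affine $E$ of lower dimension, splitting $\omega'$ via the connecting map, and using relation (3) with $\emptyset\subset E\subset W$ to push the boundary term down a degree) is sound and would close the induction on the $\delta\omega_E$ part, but it cannot repair the dimension jump introduced by Jouanolou.
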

\begin{proof}
Note that the period $2\pi i$ is of this shape.

By Corollary \ref{cor1.7} the category $\MMN^\eff$ is generated by
motives of good pairs $(X,Y,d)$ of the form 
$X=W\setminus W_\infty$, $Y=W_0\setminus (W_\infty\cap W_0)$ with
$W$ smooth projective of dimension $d$ and $W_0\cup W_\infty$ a divisor
with normal crossings. Hence their periods together with $(2\pi i)^{-1}$
generate the algebra of Kontsevich-Zagier periods.

By Remark \ref{rem1.9} we can assume that 
$X'=W\setminus W_0$ is affine. Let $Y'=W_\infty\setminus (W_0\cap W_\infty)$.
By Lemma \ref{dualexist} the motive
$H^d_\Nori(X,Y)$ is dual to 
$H^d_\Nori(X',Y')(d)$. Hence
the periods of $H^d_\Nori(X,Y)$ are in the algebra generated
by $(2\pi i)^{-1}$ and the periods of $H^d_\Nori(X',Y')$. 
As $X'$ is affine and $Y'$ a divisor with normal crossings, 
$H^d_\dR(X',Y')$ is generated by $\Omega(X')$.
\end{proof}


\section{Torsor structure on $P_{1,2}$}\label{sect3}
We return to the abstract setting. Let $D$ be a graded diagram with commutative product structure,
$T_1,T_2\to\Qmod$ two representations which become isomorphic after some
field extension $K/\Q$. The isomorphism is denoted by
\[
\varphi: T_1 \otimes K \to T_2 \otimes K. 
\]
Using the universal property of $\Ch(T_1)$ we immediately obtain:

\begin{lemma} $T_2$ extends to a fiber functor
\[
T_2: \Ch(T_1) \to \Qmod. 
\]
\end{lemma}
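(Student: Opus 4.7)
The plan is to invoke the universal property of the diagram category $\Ch(T_1)$ developed in Appendix \ref{diagrams}. That property characterizes $\Ch(T_1)$ as initial among abelian $\Q$-linear categories $\mathcal{A}$ equipped with a faithful exact $\Q$-linear functor $f:\mathcal{A}\to\Qmod$ and a representation $S:D\to\mathcal{A}$ satisfying $f\circ S=T_1$. The same universal property holds, by the same construction, when $f$ is allowed to take values in modules over an arbitrary commutative $\Q$-algebra $R$, the compatibility then reading $f\circ S = T_1\otimes_\Q R$.

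I apply this with target $\mathcal{A}=\Qmod$, fiber functor $-\otimes_\Q K:\Qmod\to\Kmod$, and lift $S=T_2$. Extension of scalars is faithful and exact, and the required compatibility $(-\otimes_\Q K)\circ T_2 \cong T_1\otimes_\Q K$ is precisely the hypothesis, furnished by $\varphi$. The universal property then yields a $\Q$-linear functor $\widetilde T_2:\Ch(T_1)\to\Qmod$ satisfying $\widetilde T_2\circ \tilde T_1 = T_2$ on the diagram, together with an identification of $(-\otimes_\Q K)\circ \widetilde T_2$ with $f_{T_1}\otimes_\Q K$ via $\varphi$.

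Faithfulness and exactness of $\widetilde T_2$ are then automatic: its composition with $-\otimes_\Q K$ is (via $\varphi$) the faithful exact functor $f_{T_1}\otimes_\Q K$, and extension of scalars $-\otimes_\Q K$ reflects both properties. The only point requiring care — and the reason the authors can declare the conclusion immediate — is the formulation of the universal property with fiber functor taking values in $\Kmod$ rather than $\Qmod$; this mild generalization of Nori's construction is built into the abstract framework of Appendix \ref{diagrams}, so in the end all that is new here is a choice of test datum $(\Qmod,-\otimes_\Q K, T_2)$ and an invocation of $\varphi$.
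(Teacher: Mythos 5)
Your plan rests on the assertion that ``the same universal property holds, by the same construction, when $f$ is allowed to take values in modules over an arbitrary commutative $\Q$-algebra $R$.'' That claim is not established in Appendix~\ref{diagrams}: Proposition~\ref{abelian_hull} characterizes $\Ch(T_1)$ using test data $(\mathcal{A},g,F)$ in which the faithful exact functor $g$ lands in $\Qmod$ and satisfies $g\circ F = T_1$ \emph{in} $\Qmod$. The variant you need --- where $g$ lands in $\Kmod$ and only $g\circ F\cong T_1\otimes K$ is required --- is strictly stronger, is not a formal consequence of the stated property, and is not obtained ``by the same construction'' (the coalgebra $A(T_1)$ lives over $\Q$, and one cannot simply feed a $\Kmod$-valued fiber functor into the proof of the universal property). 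In effect, this assertion \emph{is} the lemma, so your argument is circular at the crucial step.

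The paper sidesteps this entirely: it builds the auxiliary $\Q$-linear abelian category $\Ah$ whose objects are pairs $(V_1,V_2)$ of finite-dimensional $\Q$-vector spaces together with an isomorphism $v\colon V_1\otimes K\to V_2\otimes K$. The projection $p_1\colon\Ah\to\Qmod$ onto the first component is faithful (because $-\otimes K$ is faithful and $v$ is invertible) and exact, and $(T_1,T_2,\varphi)$ defines a representation $T\colon D\to\Ah$ with $p_1\circ T=T_1$. This is a test datum \emph{exactly} of the form the stated Proposition~\ref{abelian_hull} handles, so one gets $\Ch(T_1)\to\Ah$ and then composes with $p_2$. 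If you want to salvage your route, you would prove the $\Kmod$-valued universal property by precisely this construction of $\Ah$ --- at which point you have reproduced the paper's proof rather than avoided it. Your observations that $-\otimes_\Q K$ is faithfully flat and hence reflects faithfulness and exactness are correct, but they only help once the functor $\widetilde T_2$ has actually been produced, and that production is the missing step.
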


\begin{proof} 
Consider the abelian category $\Ah$ whose objects are pairs $(V_1,V_2)$ of
$\Q$-vector spaces together with an isomorphism $v:V_1\tensor K\to V_2\tensor K$. 
The data $T_1$, $T_2$ and $\varphi$ together define a representation 
\[ 
T:D\to \Ah\ .
\] 
Via the projection to the first component $T$ is compatible with $T_1$. 
The universal property (appendix B.10) implies that we have a commutative diagram of functors: 
\[
\begin{xy}\xymatrix{
&   \Ch(T_1)  \ar[dd]_{T}  \ar[rd]^{f_{T_1}}  & \\
D \ar[rr]_{} \ar[ru]^{\tilde T_1} \ar[rd]_{T} &   & \Qmod  \\
&  \Ah  \ar[ru]_{p_1} & \\
}\end{xy}
\]
The composition of $\Ch(T_1)\to \Ah$ with the projection $p_2$ to the second
component is the extension $T_2$.
\end{proof} 

In particular we have two fiber functors $T_1,T_2: \Ch(T_1) \to \Qmod$.

\begin{lemma}
Assume $\Ch(T_1)$ is a neutral Tannakian category. In this way we obtain
two affine group schemes $G_1=\Aut^\otimes(T_1)$, $G_2=\Aut^\otimes(T_2)$ and an affine scheme $X=X_{1,2}=\Iso^\otimes(T_1,T_2)$.
If $A_1$, $A_2$ and $A_{1,2}$ denote the Hopf algebras defined above then we have
\[
G_1=\Spec(A_1), \; G_2=\Spec(A_2), \; \textrm{ and } X=\Spec(A_{1,2}). 
\]
\end{lemma}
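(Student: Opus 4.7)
The first identity $G_1 = \Spec(A_1)$ is essentially a restatement of Theorem~\ref{tann} from the diagram-category appendix: that theorem asserts that Nori's algebra $A(D,T_1)$ is the Hopf algebra representing tensor automorphisms of $T_1$ on the diagram category $\Ch(T_1)$, as soon as the latter is neutral Tannakian, which is granted by hypothesis.

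For $G_2 = \Spec(A_2)$, my plan is to apply Theorem~\ref{tann} again after identifying $\Ch(T_2) \simeq \Ch(T_1)$. The previous lemma can be run symmetrically, using $\varphi^{-1}: T_2 \otimes K \to T_1 \otimes K$, to extend $T_1$ to a fiber functor on $\Ch(T_2)$. The two resulting universal factorizations produce faithful exact $\otimes$-functors $\Ch(T_1) \leftrightarrows \Ch(T_2)$ intertwining the given representations on $D$; by uniqueness of the factorization they are mutually inverse equivalences. Under this identification $T_2$ becomes the canonical fiber functor on $\Ch(T_2)$, so Theorem~\ref{tann} yields $G_2 = \Spec(A_2)$.

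The main content is $X = \Spec(A_{1,2})$, and I plan to verify it at the level of functors of points. For a $\Q$-algebra $R$, a ring homomorphism $A_{1,2} \to R$ is by definition a compatible family of $\Q$-linear maps $A_{1,2}(F) = \Hom(T_1|_F, T_2|_F)^\vee \to R$ indexed by finite subdiagrams $F$. Because each $T_i(p)$ is finite dimensional over $\Q$, dualizing converts such a family into a compatible system of $R$-linear maps $\lambda_p: T_1(p) \otimes R \to T_2(p) \otimes R$ satisfying exactly the naturality built into the definition of $A_{1,2}$. The universal property of $\Ch(T_1)$ (appendix~\ref{diagrams}) then extends $\lambda$ uniquely to a natural transformation of fiber functors $T_1 \otimes R \Rightarrow T_2 \otimes R$ on $\Ch(T_1)$. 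The condition that the family $\{A_{1,2}(F) \to R\}$ is \emph{multiplicative} (i.e., defines a ring and not merely a $\Q$-linear map) translates, via the construction of the product on $A_{1,2}$ given in the previous lemma, into $\otimes$-compatibility of $\lambda$. Rigidity of $\Ch(T_1)$ then forces $\lambda$ to be an isomorphism, so $\lambda \in \Iso^\otimes(T_1,T_2)(R)$. The reverse assignment is tautological and both directions are natural in $R$, hence $X = \Spec(A_{1,2})$.

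The main obstacle is the bookkeeping in the tensor-compatibility step: one must retrace how the product on $A_{1,2}$ was assembled, in the previous lemma, from the pairings $\Hom(T_1|_F, T_2|_F) \otimes \Hom(T_1|_{F'}, T_2|_{F'}) \to \Hom(T_1|_{F \times F'}, T_2|_{F \times F'})$ dual to tensor products of representations, and check that a ring homomorphism out of the colimit is precisely a $\otimes$-compatible family of transformations. Once that identification is in hand, the rest of the argument is formal Tannakian machinery combined with the universal property of Nori's construction.
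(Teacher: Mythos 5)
Your proof is correct and is essentially the Tannakian-formalism argument from Deligne--Milne that the paper cites; the paper's own proof is just the one-line pointer ``This follows (almost verbatim) the Tannakian pattern, see \cite{LNM900}'', so your functor-of-points computation — unwinding $\Q$-algebra homomorphisms $A_{1,2}\to R$ into $\otimes$-compatible natural transformations $T_1\otimes R \Rightarrow T_2\otimes R$ via the universal property and invoking rigidity to force invertibility — merely supplies the details the paper leaves implicit.
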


\begin{proof}
This follows (almost verbatim) the Tannakian pattern, see \cite{LNM900}. 
\end{proof}

In a similar way we can define an affine scheme $X_{2,1}=\Iso^\otimes(T_2,T_1)$.
These schemes are related via natural morphisms 
\[
X_{1,2} \times X_{2,1}  \longrightarrow  G_{1}, \quad X_{2,1} \times X_{1,2}  \longrightarrow  G_{2},
\]
and 
\[
G_1 \times X_{1,2} \longrightarrow X_{1,2}, \quad X_{1,2} \times G_2 \longrightarrow X_{1,2}.
\]

\begin{thm}
There is a natural isomorphism of affine schemes
\[
\iota: X_{1,2} \longrightarrow  X_{2,1}
\]
given by $f \mapsto f^{-1}$. Furthermore, $X_{1,2}$ and $X_{2,1}$ carry the structure of affine torsors 
in the sense of appendix A.
\end{thm}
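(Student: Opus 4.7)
The plan is to argue functorially: both $X_{1,2}$ and $X_{2,1}$ are defined as affine schemes of tensor isomorphisms between fiber functors, so they are determined by their functor of points. For a $\Q$-algebra $R$, an $R$-point of $X_{1,2}$ is a tensor isomorphism $f: T_1 \tensor R \to T_2 \tensor R$, and an $R$-point of $X_{2,1}$ is a tensor isomorphism $g: T_2 \tensor R \to T_1 \tensor R$. The assignment $f \mapsto f^{-1}$ is a natural bijection $X_{1,2}(R) \to X_{2,1}(R)$ of functors in $R$, and by Yoneda it arises from a unique morphism of affine schemes $\iota: X_{1,2} \to X_{2,1}$. The same recipe in the opposite direction yields a two-sided inverse, so $\iota$ is an isomorphism of affine schemes.

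Next I would verify that $X_{1,2}$ is non-empty after base change. The given $\varphi: T_1\tensor K\to T_2\tensor K$ provides a $K$-valued point of $X_{1,2}$, and $\varphi^{-1}$ provides one of $X_{2,1}$. Hence, after base change to $K$, both schemes acquire a section.

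For the torsor structure, composition of natural transformations supplies commuting actions
\[
G_1 \times X_{1,2} \longrightarrow X_{1,2}, \quad (g_1, f) \longmapsto f \circ g_1,
\]
\[
X_{1,2} \times G_2 \longrightarrow X_{1,2}, \quad (f, g_2) \longmapsto g_2 \circ f.
\]
By Yoneda it suffices to check that the resulting maps
\[
G_1 \times X_{1,2} \longrightarrow X_{1,2} \times_{\Spec\Q} X_{1,2}, \quad (g_1, f) \longmapsto (f, f\circ g_1),
\]
\[
X_{1,2} \times G_2 \longrightarrow X_{1,2} \times_{\Spec\Q} X_{1,2}, \quad (f, g_2) \longmapsto (f, g_2\circ f),
\]
are isomorphisms of affine schemes, and on $R$-points this is immediate: given any two tensor isomorphisms $f, f' \in X_{1,2}(R)$, the element $f^{-1}\circ f' \in G_1(R)$ is the unique $G_1$-translate sending $f$ to $f'$, and similarly $f'\circ f^{-1} \in G_2(R)$ is the unique $G_2$-translate. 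Together with the non-emptiness after base change to $K$, this exhibits $X_{1,2}$ (and symmetrically $X_{2,1}$) as an affine torsor in the sense of Appendix A. The tensor compatibility of the natural transformations ensures that all the maps constructed via Yoneda are morphisms of affine (group) schemes rather than mere natural transformations.

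The main obstacle I expect is bookkeeping: matching the axiomatic framework of Appendix A precisely, in particular checking that the two commuting actions on $X_{1,2}$ assemble into the structure demanded there, and that the isomorphism $\iota$ intertwines them correctly (the $G_1$-action on $X_{1,2}$ transports under $\iota$ to the natural right $G_1$-action on $X_{2,1}$, and similarly for $G_2$). Once the functor-of-points descriptions are in place, however, each of these compatibilities is a one-line identity between compositions of tensor isomorphisms.
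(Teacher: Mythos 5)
Your argument is correct, but it takes a genuinely different route from the paper, and it stops one step short of the theorem as literally stated. The paper's proof constructs \emph{directly} the ternary operation
\[
(\cdot,\cdot,\cdot)\colon X_{1,2}\times X_{1,2}\times X_{1,2}\longrightarrow X_{1,2},\qquad (f,g,h)\longmapsto f\circ g^{-1}\circ h,
\]
by composing the natural maps $X_{1,2}\times X_{2,1}\to G_1$ (resp.\ $\to G_2$) with the actions $G_1\times X_{1,2}\to X_{1,2}$ and $X_{1,2}\times G_2\to X_{1,2}$, then verifies the two axioms of Definition A.1 for this ternary map; no appeal to simple transitivity or to a section over $K$ is needed, and indeed the Appendix A notion of torsor makes sense even when $X$ has no points. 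You instead establish the classical Tannakian picture: $X_{1,2}$ is a pseudo-torsor under $G_1$ and $G_2$ (the action maps $G_1\times X_{1,2}\to X_{1,2}\times X_{1,2}$ and $X_{1,2}\times G_2\to X_{1,2}\times X_{1,2}$ are isomorphisms of affine schemes, verified on $R$-points via Yoneda), and you supply non-emptiness after base change to $K$ using $\varphi$. This is a clean and standard approach and everything you assert is true; note, however, that the base change to $K$ is actually superfluous for the conclusion claimed, since the Appendix A axioms are vacuous on an empty scheme. The one thing you defer as ``bookkeeping'' is in fact the content of the theorem as stated: Appendix A proves only the implication from a ternary torsor to a simply transitive group action (Proposition A.4), not the converse, so to land in the Appendix A framework you must still \emph{produce} the ternary map. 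Concretely, from the isomorphism $G_1\times X_{1,2}\xrightarrow{\sim} X_{1,2}\times X_{1,2}$ you obtain a division morphism $d\colon X_{1,2}\times X_{1,2}\to G_1$ and set $(x,y,z):=d(x,y)\cdot z$; one then checks axioms (1) and (2) of Definition A.1 on $R$-points exactly as in Lemma A.2. Adding this half-page closes the argument and matches the theorem's wording; as it stands, your write-up proves the equivalent group-action statement but leaves the translation implicit.
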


\begin{proof} The assertion about $\iota$ is clear. 
Using the natural maps above, one obtains a commutative diagram
\[
\begin{xy}\xymatrix{
&  X_{1,2} \times X_{1,2} \times X_{1,2}    \ar[d]^{id \times \iota \times id} & \\
&  X_{1,2} \times X_{2,1} \times X_{1,2}    \ar@{-->}[dd]^{} \ar[rd] \ar[ld] & \\
G_1 \times X_{1,2} \ar[rd] &   & X_{1,2} \times G_2 \ar[ld]\\
& X_{1,2} &  \\
}\end{xy}
\]
with, as the composition of the two vertical maps, an induced morphism in the category of affine schemes
\[
(\cdot,\cdot,\cdot): X_{1,2} \times X_{1,2} \times X_{1,2} \longrightarrow  X_{1,2} 
\]
fitting into the diagram. It satisfies the axioms of a affine torsor, as defined in appendix A.
The assertion about $X_{2,1}$ is proved in a similar way.
\end{proof}

\begin{cor}
The algebra of formal periods $P$ (see Definition \ref{kontperiods})  has a natural torsor structure under $G_{\rm mot}$. 
\end{cor}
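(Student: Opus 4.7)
The plan is to combine Theorem \ref{maintheorem} with the preceding torsor theorem of this section. Concretely, I would specialize the abstract framework by taking $D$ to be the diagram of pairs from Definition \ref{goodpairs}, $T_1 = H^*_\dR$ and $T_2 = H^*$, and $K = \C$, with $\varphi$ the classical period comparison isomorphism. With this choice $\Ch(T_2) \simeq \MMN$ by Corollary \ref{cor1.6}, and the latter has already been shown in Section \ref{sect1} to be neutral Tannakian with Tannaka dual $G_{\rm mot} = \Spec(A_2)$. The universal property of diagram categories together with $\varphi$ identifies $\Ch(T_1)$ with $\Ch(T_2)$ after base change to $\C$, so $\Ch(T_1)$ is likewise neutral Tannakian. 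This verifies the hypothesis of the preceding theorem.

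Applying that theorem, $X_{1,2} = \Spec(A_{1,2})$ acquires the structure of an affine torsor in the sense of Appendix \ref{sectA}, with a natural right action by $G_2 = \Aut^{\otimes}(H^*) = G_{\rm mot}$ given by the morphism $X_{1,2} \times G_2 \to X_{1,2}$ constructed there. Theorem \ref{maintheorem} gives an isomorphism $P \cong A_{1,2}$ of $\Q$-algebras, so $\Spec(P) \cong X_{1,2}$ as affine schemes; transporting the torsor structure along this isomorphism produces the asserted natural $G_{\rm mot}$-torsor structure on $\Spec(P)$.

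The one nontrivial point is checking that the isomorphism $\Psi : P \to A_{1,2}$ of Theorem \ref{maintheorem} intertwines the $G_{\rm mot}$-coactions on the two sides. Unwinding definitions, the coaction on $A_{1,2}$ is induced by post-composing $\phi \in \Hom(T_1|_F, T_2|_F)$ with an endomorphism of $T_2|_F$, while $\Psi$ sends $(p,\omega,\gamma)$ to the functional $\phi \mapsto \gamma(\phi(p)(\omega))$ on $\Hom(T_1|_F,T_2|_F)$. Post-composing $\phi$ with an endomorphism $\psi$ of $T_2$ has the same effect on $\Psi(p,\omega,\gamma)$ as acting by $\psi^\vee$ on $\gamma \in T_2(p)^\vee$, which is precisely the natural $A_2$-coaction on the generators of $P$ arising from functoriality of singular homology. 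This compatibility is the main step; granted it, the corollary follows formally from the two theorems cited.
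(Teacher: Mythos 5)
Your proposal is correct and follows essentially the same route as the paper: identify $P\cong A_{1,2}$ via Theorem \ref{maintheorem}, then transport the torsor structure on $X_{1,2}=\Spec(A_{1,2})$ constructed in the preceding theorem. The only small divergence is that you identify $G_\mot$ with $G_2=\Aut^\otimes(H^*)$ (acting on the right), which matches the paper's definition of $G_\mot$ via singular cohomology and is arguably cleaner than the paper's passing identification $G_1=G_\mot$; your explicit check that $\Psi$ intertwines the coactions is a spelled-out version of what the paper covers under ``compatibility with coproducts'' in the proof of Theorem \ref{thm2.6}.
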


\begin{proof}  By Theorems 2.6 and 2.9 we have $A_{1,2}=P=P_{1,2}$ The previous theorem defines a
map 
\[
A_{1,2} \longrightarrow A_{1,2} \otimes A_{1,2} \otimes A_{1,2},
\]
i.e., a natural map
\[
P \longrightarrow P \otimes P \otimes P. 
\]
Finally note that $G_1=G_\mot$ is the motivic fundamental group by definition.
\end{proof}

\begin{rem}
In terms of period matrices this is given by the formula in \cite{Ko}:
\[
P_{ij} \mapsto \sum_{k,\ell} P_{ik} \otimes P_{k\ell}^{-1} \otimes P_{\ell j}.
\] 
The torsors in this section are naturally topological torsors in the pro-fppf topology. 
\end{rem}

\appendix
\section{Torsors}\label{sectA}

Kontsevich uses the following definition of torsors in \cite{Ko}. 
This notion at least goes back to a paper of R. Baer \cite{baer} from 1929, see the footnote
on page 202 of loc. cit. where Baer explains how the notion of a torsor comes up in the context of earlier 
work of H. Pr\"ufer \cite{pruefer}. In yet another context, ternary operations satisfying these axioms 
are called associative Malcev operations, see \cite{johnstone} for a short account.

\begin{defn}[\cite{baer} p. 202, \cite{Ko} p. 61, \cite{Fr} Definition 7.2.1]
A {\em torsor} is a set $X$ together with a map
\[ (\cdot,\cdot,\cdot):X\times X\times X\to X\]
satisfying:
\begin{enumerate}
\item $(x,y,y)=(y,y,x)=x$ for all $x,y\in X$
\item $((x,y,z),u,v)=(x,(u,z,y),v)=(x,y,(z,u,v))$ for all $x,y,z,u,v\in X$.
\end{enumerate}
Morphisms are defined in the obvious way, i.e., maps $X \to X'$ of sets commuting with the torsor structure.
\end{defn}
\begin{lemma} Let $G$ be a group. Then $(g,h,k)=gh^{-1}k$ defines a torsor structure
on $G$. 
\end{lemma}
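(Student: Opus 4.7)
The plan is to verify the two torsor axioms directly by substituting the formula $(g,h,k) = gh^{-1}k$ and simplifying using only associativity of the group operation and the defining property $y^{-1}y = yy^{-1} = e$. There is no clever idea required: the proof is a one-line computation for each identity.

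For axiom (1), I would compute $(x,y,y) = xy^{-1}y = x$ and $(y,y,x) = yy^{-1}x = x$ using the definition of the inverse. For axiom (2), I would expand each of the three expressions in turn. On the left, $((x,y,z),u,v) = (xy^{-1}z)u^{-1}v$. In the middle, $(x,(u,z,y),v) = x(uz^{-1}y)^{-1}v = xy^{-1}zu^{-1}v$, where I use that $(uz^{-1}y)^{-1} = y^{-1}zu^{-1}$. On the right, $(x,y,(z,u,v)) = xy^{-1}(zu^{-1}v)$. By associativity of the group operation, all three expressions equal $xy^{-1}zu^{-1}v$, which establishes the chain of equalities.

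The only potential pitfall is getting the inversion in the middle term right, since the inverse of a product reverses the order; once this is handled correctly, everything lines up. There is no real obstacle — this lemma is a sanity check showing that Baer's axioms are consistent with the intended notion, and the proof is essentially a bookkeeping exercise in the group axioms.
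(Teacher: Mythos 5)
Your proof is correct and follows essentially the same direct-verification approach as the paper: both expand each axiom using the definition, apply associativity and the identity $(uz^{-1}y)^{-1} = y^{-1}zu^{-1}$, and observe that all three expressions in axiom (2) reduce to $xy^{-1}zu^{-1}v$. Nothing to add.
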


\begin{proof} This is a direct computation:
\begin{align*} 
(x,y,y)&=xy^{-1}y=x=yy^{-1}x=(y,y,x),\\
((x,y,z),u,v)&=(xy^{-1}z,u,v)=xy^{-1}zu^{-1}v=(x,y,zu^{-1}v)=(x,y,(z,u,v)),\\
(x,(u,z,y),v)&=(x,uz^{-1}y,v)=x(uz^{-1}y)^{-1}v)=xy^{-1}zu^{-1}v.
\end{align*}
\end{proof}

\begin{lemma}[\cite{baer} page 202]\label{firstgroup} Let $X$ be a torsor, $e\in X$ an element. Then $G_e:=X$
carries a group structure via 
\[ 
gh:=(g,e,h),\hspace{2ex} g^{-1}:=(e,g,e).
\]
Moreover, the torsor structure on $X$ is given by the formula $(g,h,k)=gh^{-1}k$ in $G_e$.
\end{lemma}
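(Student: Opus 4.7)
The plan is to verify the group axioms (identity, inverse, associativity) for the operation $gh := (g,e,h)$ with inverse $g^{-1} := (e,g,e)$ and unit $e$, using torsor axioms (1) and (2); then derive the identity $(g,h,k) = gh^{-1}k$ by a short computation with the same axioms.

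First I would handle the identity: $eg = (e,e,g) = g$ and $ge = (g,e,e) = g$, which are both immediate instances of axiom (1). The inverse step is next: to show $g \cdot g^{-1} = (g,e,(e,g,e))$ equals $e$, I would use the rightmost form of axiom (2) to regroup this as $((g,e,e),g,e) = (g,g,e)$, which equals $e$ by axiom (1); the identity $g^{-1} \cdot g = e$ is symmetric, reparenthesizing $((e,g,e),e,g) = (e,g,(e,e,g)) = (e,g,g) = e$.

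For associativity, I would observe that $(gh)k = ((g,e,h),e,k)$ and $g(hk) = (g,e,(h,e,k))$ are equated directly by the first and third terms of axiom (2) applied with $u = e$. Finally, for the recovery formula, I would expand $gh^{-1}k = ((g,e,(e,h,e)),e,k)$, apply axiom (2) to rewrite the inner part as $(g,e,(e,h,e)) = ((g,e,e),h,e) = (g,h,e)$, and then apply axiom (2) again to conclude $((g,h,e),e,k) = (g,h,(e,e,k)) = (g,h,k)$.

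The main obstacle here is essentially notational rather than conceptual: no step requires more than a single reparenthesization via axiom (2) followed by a collapse via axiom (1), so the chief care needed is to pick the correct one of the three expressions in axiom (2) at each stage. No additional hypotheses are used beyond the two torsor axioms, which explains why the construction depends only on the choice of basepoint $e$.
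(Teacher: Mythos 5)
Your proposal is correct and follows essentially the same route as the paper: verify identity, inverse, and associativity directly from the two torsor axioms, then expand $gh^{-1}k$ and reparenthesize back to $(g,h,k)$. The only cosmetic differences are that you check $g^{-1}g=e$ explicitly (the paper says ``similarly''), and for the recovery formula you collapse $(g,e,(e,h,e))$ to $(g,h,e)$ via the third form of axiom (2) and then apply it once more, whereas the paper routes through the middle form $(x,(u,z,y),v)$ and simplifies that middle slot; both are correct one-line variations of the same computation.
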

\begin{proof}
First we show associativity:
\[ 
(gh)k=(g,e,h)k=((g,e,h),e,k)=(g,e,(h,e,k))=g(h,e,k)=g(hk).
\]
$e$ becomes the neutral element:
\[ 
eg=(e,e,g)=g; ge=(g,e,e)=g.
\]
We also have to show that $g^{-1}$ is indeed the inverse element:
\[ 
gg^{-1}=g(e,g,e)=(g,e,(e,g,e))=((g,e,e),g,e)=(g,g,e)=e.
\]
Similarly one shows that $g^{-1}g=e$. One gets the torsor structure back, since
\begin{align*}
 gh^{-1}k& =g(e,h,e)k=(g,e,(e,h,e))k=((g,e,(e,h,e)),e,k)\\
         & =(g,(e,(e,h,e),e),k)=(g,((e,e,h),e,e),k)\\
	 &   =(g,(h,e,e),k)=(g,h,k).
\end{align*}
\end{proof}

\begin{prop}\label{group}
Let $\mu_l:X^2\times X^2\to X^2$ be given by
\[ 
\mu_l\left((a,b),(c,d)\right)=((a,b,c),d).
\]
Then $\mu_l$ is associative and has $(x,x)$ for $x\in X$ as left-neutral 
elements.
Let $G^l=X^2/\sim_l$ where $(a,b)\sim_l (a,b)(x,x)$ for all $x\in X$ is an equivalence relation. Then
$\mu_l$ is well-defined on $G^l$ and turns $G^l$ into a group. Moreover,
the torsor structure map factors via a simply transitive left $G^l$-operation on $X$ which is defined by
\[
(a,b)x:=(a,b,x). 
\]
Let $e\in X$. Then
\[ 
i_e: G_e\to G^l, \hspace{1cm}x\mapsto (x,e)
\]
is group isomorphism inverse to $(a,b) \mapsto (a,b,e)$. \\
In a similar way, using $\mu_r\left((a,b),(c,d)\right):=(a,(b,c,d))$ we obtain 
a group $G^r$ with analogous properties acting transitively on the right on $X$ and such that 
$\mu_r$ factors through the action $X \times G^ r \to X$. 
\end{prop}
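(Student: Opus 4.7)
The plan is to treat this as a series of direct computations using the two torsor axioms, with axiom (2) doing most of the work (it is precisely the ``associativity constraint'' that makes $\mu_l$ well-behaved). The chain of verifications is long but each individual step is short.

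First I would verify \emph{associativity of $\mu_l$ on $X^2$}: starting from $\mu_l(\mu_l((a,b),(c,d)),(e,f)) = (((a,b,c),d,e),f)$, apply axiom (2) in the form $((a,b,c),d,e)=(a,b,(c,d,e))$ to rewrite this as $((a,b,(c,d,e)),f) = \mu_l((a,b),\mu_l((c,d),(e,f)))$. For \emph{left-neutrality of $(x,x)$}, compute $\mu_l((x,x),(c,d))=((x,x,c),d)=(c,d)$ using axiom (1) in the form $(y,y,z)=z$.

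Next I would check that $\sim_l$ (defined by $(a,b)\sim_l(a',b')$ iff $(a',b')=((a,b,x),x)$ for some $x\in X$) is genuinely an equivalence relation: reflexivity uses $x=b$ and $(a,b,b)=a$; symmetry uses axiom (2) to compute $((a,b,x),x,b)=(a,b,(x,x,b))=(a,b,b)=a$, so the witness is $x'=b$; transitivity combines two applications of axiom (2) as $(((a,b,x),x,y),y)=((a,b,(x,x,y)),y)=((a,b,y),y)$. Then I would verify \emph{$\mu_l$ descends}: the left-slot case reduces by axiom (2) to the identity $\mu_l(((a,b,x),x),(c,d))=\mu_l((a,b),(c,d))$, and the right-slot case gives $\mu_l((a,b),((c,d,x),x))=(((a,b,c),d,x),x)\sim_l\mu_l((a,b),(c,d))$. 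Since $(x,x)\sim_l(y,y)$ (take the witness $y$), the class of $(x,x)$ is a canonical two-sided neutral element, and the candidate inverse of $[(a,b)]$ is $[(b,a)]$, which works because $\mu_l((a,b),(b,a))=((a,b,b),a)=(a,a)$ and symmetrically on the other side.

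For the action, I would define $[(a,b)]\cdot x:=(a,b,x)$, check it is well-defined via $(a,b,(y,y,x))=(a,b,x)$, check associativity with $\mu_l$ via axiom (2), and establish simple transitivity: given $x,y\in X$, the element $[(y,x)]$ sends $x\mapsto(y,x,x)=y$, and uniqueness follows because any $(a,b)$ satisfying $(a,b,x)=y$ is $\sim_l$-equivalent to $(y,x)$ (take witness $z=x$). Finally, for the map $i_e$, I would compute $i_e(g)\cdot i_e(h) = ((g,e,h),e) = i_e(gh)$, and check that $(a,b)\mapsto(a,b,e)$ is inverse: one composition gives $x\mapsto(x,e,e)=x$, the other gives $(a,b)\mapsto((a,b,e),e)\sim_l(a,b)$ by the defining relation with witness $e$. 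The construction for $\mu_r$ and $G^r$ is entirely parallel, swapping the roles of the two coordinates and using the other rearrangement in axiom (2).

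The main obstacle, if any, is purely notational: keeping track of which form of axiom (2) (there are three equivalent presentations in the definition) is being invoked at each step, and making sure that the chosen equivalence relation $\sim_l$ really is an equivalence relation rather than merely a relation that must be symmetrised and transitive-closed. Once the symmetry and transitivity verifications above go through, the rest is bookkeeping.
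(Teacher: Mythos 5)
Your proposal is correct and follows essentially the same approach as the paper: a sequence of direct computations using the two torsor axioms to verify associativity, left-neutrality, the equivalence relation, descent of $\mu_l$, the inverse, the action, and the isomorphism $i_e$. The only notable (and welcome) refinement is that you establish simple transitivity directly—exhibiting $[(y,x)]$ as the unique class sending $x$ to $y$—whereas the paper deduces it indirectly from the group structure on $G_e$.
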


\begin{proof}
First we check associativity of $\mu_l$:
\begin{align*}
(a,b)[(c,d)(e,f)]&=(a,b)( (c,d,e),f)=((a,b,(c,d,e)),f)=
               (( (a,b,c),d,e),f)\\
[(a,b)(c,d)](e,f)&=((a,b,c),d)(e,f)=(((a,b,c),d,e),f)\\
\end{align*}
$(x,x)$ is a left neutral element for every $x \in X$:
\[ 
(x,x)(a,b)=((x,x,a),b)=(a,b)
\]
We also need to check that $\sim_l$ is an equivalence relation: $\sim_l$ is reflexive, since one has 
$(a,b)=((a,b,b),b)=(a,b)(b,b)$ by the first torsor axiom and the definition of $\mu$. 
For symmetry, assume $(c,d)=(a,b)(x,x)$. Then 
\begin{align*}
(a,b)&=((a,b,b),b)=((a,b,(x,x,b)),b)=(((a,b,x),x,b),b) \\
&=((a,b,x),x)(b,b)=(a,b)(x,x)(b,b)=(c,d)(b,b)
\end{align*} 
again by the torsor axioms and the definition of $\mu_l$. For transitivity observe that
\[
(a,b)(x,x)(y,y)=(a,b)((x,x,y),y)=(a,b)(y,y).
\]
Now we show that $\mu_l$ is well-defined on $G^l$:
\[ 
[(a,b)(x,x)][(c,d)(y,y)]=(a,b)[(x,x)(c,d)](y,y)=(a,b)(c,d)(y,y).
\]
The inverse element to $(a,b)$ in $G^l$ is given by $(b,a)$, since 
\[ 
(a,b)(b,a)=((a,b,b),a)=(a,a).
\]
Define the left $G^l$-operation on $X$ by $(a,b)x:=(a,b,x)$. 
This is compatible with $\mu_l$, since  
\begin{align*}
[(a,b)(c,d)]x&=((a,b,c),d)x=((a,b,c),d,x),\\
(a,b)[(c,d)x]&=(a,b)(c,d,x)=((a,b,(c,d,x))
\end{align*}
are equal by the second torsor axiom. 
The left $G^l$-operation is well-defined with respect to $\sim_l$:
\[ 
[(a,b)(x,x)]y=((a,b,x),x)y=((a,b,x),x,y)=(a,(x,x,b),y)=(a,b,y)=(a,b)y. 
\]
Now we show that $i_e$ is a group homomorphism:
\[
ab=(a,e,b) \mapsto ((a,e,b),e)=(a,e)(b,e)
\]
The inverse group homomorphism is given by 
\[ 
(a,b)(c,d)=((a,b,c),d)\mapsto((a,b,c),d,e).
\]
On the other hand in $G_e$ one has:
\[ 
(a,b,e)(c,d,e)=((a,b,e),e,(c,d,e))=(a,b,(e,e,(c,d,e)))=(a,b,(c,d,e)).
\]
This shows that $i_e$ is an isomorphism. The fact that $G_e$ is 
a group implies that the operation of $G^l$ on $X$ is simply transitive.
Indeed the group structure on $G_e=X$ is the one induced by the operation of $G^l$.
The analogous group $G^r$ is constructed using $\mu_r$ and an equivalence relation $\sim_r$ 
with opposite order, i.e., $(a,b)\sim_r (x,x)(a,b)$ for all $x\in X$. The properties of $G^r$
can be verified in the same way as for $G^l$ and are left to the reader.
\end{proof}

\begin{defn}
A {\em torsor} in the category of schemes is a scheme $X$ and a morphism
\[ 
X\times X\times X\to X
\]
which on $S$-valued points is a torsor for all $S$.
\end{defn}
This simply means that the diagrams of the previous definition commute
as morphisms of schemes. The following is the scheme theoretic version of Lemma \ref{group}.

\begin{prop}
Let $X$ be a torsor in the category of affine schemes. Then there are affine group schemes
$G^l$ and $G^r$ operating from the left and right on $X$ respectively such
that $X$ is a $G^l$- and $G^r$-torsor.
\end{prop}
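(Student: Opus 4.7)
The plan is to lift the set-theoretic argument of Proposition \ref{group} to affine schemes via the functor of points. The torsor structure on $X$ is encoded by a morphism $X\times X\times X\to X$ satisfying commutative diagrams, which for every $\Q$-algebra $R$ translate into the set-theoretic torsor axioms on $X(R)$. Applying Proposition \ref{group} functorially in $R$, I obtain a natural group $G^l(R) := X(R)^2/\!\sim_l$, and similarly $G^r(R) := X(R)^2/\!\sim_r$, together with simply transitive left and right actions on $X(R)$.

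The heart of the proof is to show that these functors are representable by affine schemes. I would first make the equivalence relation scheme-theoretic: unwinding the definition, $(a,b)\sim_l(c,d)$ holds iff $(c,d)=((a,b,x),x)$ for some $x$; setting $x=d$, this amounts to $c = (a,b,d)$. Hence $\sim_l$ is the image of the immersion
\[
X^3 \longrightarrow X^2\times X^2, \qquad (a,b,d) \longmapsto \bigl((a,b),\,((a,b,d),d)\bigr),
\]
i.e., an honest scheme-theoretic equivalence relation on $X\times X$.

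For representability I would proceed by fpqc descent. Assuming $X$ is nonempty (otherwise the statement is vacuous), the structure morphism $X \to \Spec\Q$ is an fpqc cover since everything is flat over a field. Base changing to $X$ endows $X_X = X \times X$ with its tautological diagonal section, and the construction of Lemma \ref{firstgroup} applied functorially to this section identifies $G^l_X$ with $X_X$ as an affine $X$-scheme. The transition between the group structures attached to two different sections is a group isomorphism (the composite $G_e \to G^l \to G_{e'}$ from Proposition \ref{group}), which supplies the cocycle condition for an effective descent datum of affine schemes on $X \times_\Q X$. Hence $G^l$ is represented by an affine group scheme over $\Q$ with multiplication induced by $\mu_l$; the construction of $G^r$ is completely symmetric, using $\mu_r$ and $\sim_r$.

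The main obstacle is this representability step, specifically verifying that the change-of-basepoint maps arising in Proposition \ref{group} assemble into an effective fpqc descent datum; this will reduce to the torsor axioms being morphism identities rather than merely pointwise identities. Once representability is in hand, the group axioms for $G^l$ and $G^r$ and the assertion that $X$ is a torsor under the induced actions $(a,b)\cdot x = (a,b,x)$ and $x\cdot(a,b) = (x,a,b)$ are immediate consequences of the corresponding set-level statements in Proposition \ref{group}, applied functorially to $X(R)$.
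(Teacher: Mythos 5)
Your argument is correct but follows a genuinely different route from the paper. The paper constructs $G^l$ directly as the quotient $X^2/\!\sim_l$ by invoking SGA3, expos\'e~VII, Th\'eor\`eme~1.4, which produces affine quotients by flat equivalence relations; the authors take $R = X^2 \times X$ with $p_0(a,b,x)=(a,b)$, $p_1(a,b,x)=((a,b,x),x)$, note that $R\to X^2$ is flat over the base field, and that $(p_0,p_1)$ is a closed immersion because it is a graph morphism. You instead trivialize $G^l$ over the fpqc cover $X\to\Spec\Q$: after base change the tautological section $e$ lets Lemma~\ref{firstgroup} identify $G^l_X$ with the affine $X$-scheme $X\times X$, and you descend. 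The step you flag as the main obstacle --- the cocycle condition --- does indeed reduce to the torsor axioms: the change-of-basepoint map over $X\times X$ is $x\mapsto (x,e,e')$, and over $X^3$ the cocycle identity $((x,e,e'),e',e'')=(x,e,e'')$ follows from $((x,e,e'),e',e'')=(x,e,(e',e',e''))$ (axiom~2) and $(e',e',e'')=e''$ (axiom~1). Both routes rest on a nontrivial input --- SGA3's theorem on flat equivalence relations versus effective fpqc descent for affine morphisms --- but your approach makes it slightly more transparent why the group multiplication descends, since the transition isomorphisms $G_e\to G_{e'}$ are group homomorphisms by Proposition~\ref{group}. Your explicit remark that $X$ must be nonempty is also worth keeping: the empty scheme formally satisfies the torsor axioms, yet $X^2/\!\sim_l$ then has no identity, so the hypothesis is implicit in the paper's proof as well.
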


\begin{proof}
We use Thm. 1.4 from \cite[expos\'e VII]{SGA3} to obtain affine scheme quotients by equivalence relations. 
In the case of $G^l$ we have to construct the quotient $G^l=X^2/\sim_l$. The case of $G^r$ is similar.
Then it follows also that the action homomorphism $G^l \times X \to X$ will be algebraic as the morphism $\mu_l$ 
also descends. In order to apply \cite{SGA3} we need to construct two morphisms 
$$
p_0,p_1: R \to X^2
$$
from an affine scheme $R$ to $X^2$, such that 
$$
R \to X^2 \times X^2
$$
is a closed embedding, for all $T$ the image $R(T) \to (X^2 \times X^2)(T)$ is an equivalence relation, 
i.e., reflexive, symmetric and transitive and such that the first projection $R \to X^2$ is flat. We choose 
\[
R:=X^2 \times X
\]
and set 
\[
p_0(a,b,x):=(a,b), \quad p_1(a,b,x):=(a,b)(x,x)=((a,b,x),x).
\]
Then the first projection $R \to X^2$ is flat. The image of $R$ is an equivalence relation by
Proposition \ref{group}. It remains to show that $(p_0,p_1): R \to X^2 \times X^2$ is a closed embedding. But this follows as 
$(p_0,p_1)(a,b,x)=((a,b),((a,b,x),x))$ is a graph type morphism. 
\end{proof}



\section{Localization and multiplication in diagrams}\label{sectB}

\label{diagrams}
A sketch of Nori's construction of motives is contained
in Levine's survey \cite{LeNori} \S 5.3.
We use this as a starting point
and develop the theory further to the extent needed for the proof of
the main theorem \ref{maintheorem} on periods. Full proofs for the
basics of diagram categories can be found in von Wangenheim's diploma thesis \cite{wangenheim}.  
\subsection{Diagrams and diagram categories}

Let $R$ be a noetherian ring. 

\begin{defn}
A \emph{small diagram} $D$ is a directed graph on a set of vertices 
such that for every vertex there is a distinguished edge
$\id:v\to v$. 
A diagram is called {\em finite} if it has only finitely many vertices. 
A {\em finite subdiagram} of a small diagram $D$ is a diagram containing a finite
subset of vertices of $D$ and all edges (in $D$) between them.
\end{defn}

\begin{rem}We added the notion of identity edges to Nori's definition given
in \cite{LeNori}.
It is useful when considering multiplicative structures.
\end{rem}

\begin{ex} Let $\Ch$ be a small category. 
Then we can associate a diagram 
$D(\Ch)$ with vertices the set of objects in $\Ch$ and edges given by morphisms.
\end{ex}

\begin{defn}[Nori]
A \emph{representation} $T$ of a diagram $D$ in a  category 
$\mathcal C$ is a map $T$ of directed graphs from $D$ to $D(\Ch)$ such that
$\id$ is mapped to $\id$. 
\end{defn}

We are particularly interested in categories of modules. 
\begin{defn} By $\Rmod$ we denote the category of
finitely generated $R$-modules. By $\Rfree$ we denote the subcategory of 
projective $R$-modules of finite type.
\end{defn}

Nori constructs a certain universal abelian category $\Ch(T)$ attached to 
a diagram and a representation $T$. 
For later use we recall Nori's construction.

\begin{defn}[Nori] 
Let $T$ be a representation of $D$ in $\Rmod$.
For each finite subdiagram $F\subset D$ let $\End(T|_F)$ be the ring
of endomorphisms of the functor $T|_F$, more precisely, as 
\[ 
\End(T|_F) := \left\{ (e_p)_{p \in F} \in \prod_{p\in F}\End_R(T(p)) \mid e_q \circ T(m)=T(m) \circ e_p \; \forall p,q \in F \; \forall m \in{\rm Mor}(p,q) \right\}.
\]

Let 
\[ \Ch(T|_F)=\End(T|_F)-\Mod\] be the category of finitely generated $R$-modules equipped with
an $R$-linear operation of algebra $\End(T|_F)$. Finally let
\[\Ch(T)=\colim_F \Ch(T|_F)\  .\]
\end{defn}

In the cases of most interest, there is more direct description.

\begin{prop}\label{propB.7}
Suppose $R$ is a Dedekind domain or a field and $T$ takes values in
$\Rfree$. Let $A(F,T):=\Hom_R(\End(T|_F),R)$.  We set
\[
A(T):=\colim_F A(F,T).
\]
Then $\Ch(T)$ is the category of finitely generated $R$-modules with an
$R$-linear $A(T)$-comodule structure.
\end{prop}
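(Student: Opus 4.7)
The plan is to establish the equivalence one finite subdiagram at a time and then pass to the colimit. Fix a finite $F\subset D$ and write $B=\End(T|_F)$ and $C=A(F,T)=\Hom_R(B,R)$. The target reduces to the classical fact that if $B$ is an $R$-algebra which is finitely generated projective as an $R$-module, then $C=B^\vee$ carries a canonical $R$-coalgebra structure and the category of finitely generated $B$-modules is naturally equivalent to the category of finitely generated $R$-modules equipped with a $C$-coaction.

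The crucial point is that $B$ is finitely generated projective over $R$. By construction $B$ sits inside the finite product $P:=\prod_{p\in F}\End_R(T(p))$ as the kernel of the $R$-linear map
\[
P\longrightarrow \prod_{m\colon p\to q}\Hom_R(T(p),T(q)),\qquad (e_p)\longmapsto \bigl(e_q\circ T(m)-T(m)\circ e_p\bigr)_m,
\]
with $m$ ranging over the (finitely many) edges of $F$. Since each $T(p)$ lies in $\Rfree$, both source and target are finitely generated projective $R$-modules, and over a Dedekind domain or a field any submodule of such a module is again finitely generated projective. Consequently $B$, and hence $C=B^\vee$, is finitely generated projective, and the canonical map $B\to C^\vee$ is an isomorphism. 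This is the only step in which the hypothesis on $R$ is used in an essential way; I expect it to be the main obstacle, in the sense that the whole statement collapses without it.

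The remainder is formal finite-projective duality. The multiplication and unit of $B$ dualise, via the natural isomorphism $(B\otimes_R B)^\vee\cong C\otimes_R C$, to a comultiplication and counit on $C$, and the algebra axioms translate into the coalgebra axioms. For any $R$-module $M$ the same identification produces a natural bijection
\[
\Hom_R(B\otimes_R M,\,M)\;\cong\;\Hom_R(M,\Hom_R(B,M))\;\cong\;\Hom_R(M,\,M\otimes_R C),
\]
under which $B$-actions on $M$ correspond to $C$-coactions; a direct check shows that associativity and unitality on the left translate into coassociativity and counitality on the right. This yields the desired equivalence between $\Ch(T|_F)$ and the category of finitely generated $C$-comodules, and this equivalence is functorial in $F$ via the restriction algebra maps $\End(T|_{F'})\to\End(T|_F)$ for $F\subset F'$ together with their dual coalgebra maps $A(F,T)\to A(F',T)$.

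Finally, to pass to the colimit, observe that a coaction $M\to M\otimes_R A(T)=\colim_F\,M\otimes_R A(F,T)$ on a finitely generated $M$ automatically factors through some $M\otimes_R A(F,T)$, and two such factorisations define the same $A(T)$-comodule precisely when they agree after enlarging $F$. This matches Nori's colimit definition $\Ch(T)=\colim_F\Ch(T|_F)$ and completes the proof.
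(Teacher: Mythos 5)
Your proof is correct and takes essentially the same approach as the paper, whose own proof is a brief sketch deferring the details to von Wangenheim's thesis: establish that $\End(T|_F)$ is finitely generated projective over $R$ using the Dedekind/field hypothesis, dualize to pass to $A(F,T)$-comodules, and take the filtered colimit. You have simply spelled out the finite-projective duality step and the colimit argument that the paper leaves implicit.
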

\begin{proof}
The assumptions on $R$ and $T$ ensure that $\End(T|_F,R)$ is a locally free $R$-module.
This allows to pass to the comodule description for finite diagrams, then
pass to the limit.

For full details see \cite{wangenheim} Satz 5.23. (He uses principal ideal domains. The arguments work without changes for Dedekind domains.) 
\end{proof}

The main step in Nori's construction of an abelian category of motives is the following result:

\begin{prop}[Nori's diagram category]\label{abelian_hull}
Let $R$ be a noetherian ring.
Let $D$ be a diagram and $T: D \to \Rmod$ be a representation. Then there
is a category $\Ch(T)$ together with a 
faithful exact $R$-linear functor $f_T: {\mathcal C}(T) \to  \Rmod$ 
and a representation $\tilde T: D \to {\mathcal C}(T)$
such that $f_T \circ \tilde T=T$ and $\tilde T$ is universal with respect to 
this property, i.e., for any other representation $F: D \to {\mathcal A}$
into some $R$-linear abelian category ${\mathcal A}$ with a faithful exact 
functor ${\mathcal A} \to \Rmod$ there is a unique functor (up to unique isomorphism)  
${\mathcal C}(T) \to {\mathcal A}$ such that
\[
\begin{xy}\xymatrix{
&   {\mathcal C}(T)  \ar@{-->}[dd]_{\exists !} \ar[rd]^{f_T}  & \\
D \ar[rr]_{T} \ar[ru]^{\tilde T} \ar[rd]^{F} &   & \Rmod \\
& {\mathcal A} \ar[ru] & \\
}\end{xy}
\]
commutes up to unique isomorphism.

$\Ch(T)$  is functorial in $D$ and $T$ in the obvious way.
\end{prop}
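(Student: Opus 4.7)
The plan is to take $\Ch(T)$ as in the construction above and verify the universal property. For each finite $F\subset D$, set $E_F=\End(T|_F)$, and let $\Ch(T|_F)$ be the category of finitely generated $R$-modules equipped with a compatible $R$-linear $E_F$-action. The forgetful functor $f_F:\Ch(T|_F)\to\Rmod$ is exact and faithful, and the representation $\tilde T_F:F\to\Ch(T|_F)$ sends a vertex $v$ to $T(v)$ with its tautological $E_F$-action via the projection $E_F\to\End_R(T(v))$; by construction $f_F\circ\tilde T_F=T|_F$. For $F\subset F'$, restriction of endomorphisms yields a ring homomorphism $E_{F'}\to E_F$, and restriction of scalars gives a faithful exact functor $\iota_{F,F'}:\Ch(T|_F)\to\Ch(T|_{F'})$ compatible with the fiber functors and the representations. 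The system is filtered, and $\Ch(T):=\colim_F\Ch(T|_F)$ inherits a fiber functor $f_T$ and representation $\tilde T$ with $f_T\circ\tilde T=T$.

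For universality, let $G:D\to\Ah$ be a representation into an $R$-linear abelian category equipped with a faithful exact $g:\Ah\to\Rmod$ satisfying $g\circ G=T$. For each finite $F$, applying $g$ yields a canonical ring map $\End(G|_F)\to E_F$, injective by faithfulness of $g$. The key technical ingredient is a Gabriel--Morita style reconstruction lemma: the smallest full abelian subcategory $\Ah_F\subset\Ah$ containing $\{G(v):v\in F\}$ and closed under subobjects and quotients is equivalent, via $g$, to the full subcategory of $R$-finite $\End(G|_F)$-modules arising as subquotients of finite direct sums of the $T(v)$. Granting this, restriction of scalars along $\End(G|_F)\hookrightarrow E_F$ produces a functor $\Ch(T|_F)\to\Ah_F\hookrightarrow\Ah$ extending $G|_F$; these are compatible under the transition maps $\iota_{F,F'}$ and assemble to the desired functor $\Ch(T)\to\Ah$. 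Uniqueness follows because the image of $\tilde T$ generates $\Ch(T)$ under finite colimits, kernels and cokernels in the filtered system, and both fiber functors are faithful.

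The main obstacle is the reconstruction lemma for finite $F$: one must show that an $R$-linear abelian category generated under subquotients by a finite collection of objects, together with a faithful exact functor to $\Rmod$, is determined up to equivalence by the endomorphism algebra of the fiber functor restricted to those generators. Over a general noetherian base $R$ this requires a careful Morita-theoretic argument, and the full details have been worked out in \cite{wangenheim}.
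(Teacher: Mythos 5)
The paper gives no proof of this proposition; it simply cites \cite{brug} and \cite{wangenheim} Theorem 2.4, so there is no detailed argument of the paper's to compare with. Your construction of $\Ch(T)$, $f_T$, and $\tilde T$ from the filtered system of the $\End(T|_F)$-module categories is exactly the paper's Definition of $\Ch(T)$, and the injective ring map $\End(G|_F)\hookrightarrow E_F$ coming from faithfulness of $g$ is correct. The problem is the universality step.

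Your ``reconstruction lemma'' is not true as stated, and the restriction-of-scalars argument built on it does not produce a functor extending $G$. The forgetful functor $g$ restricted to $\Ah_F$ is faithful and exact but in general \emph{not full}, so it cannot be the equivalence you describe. Take $R=\C$, $D$ a single vertex $v$ with only its identity edge, $\Ah=\mathrm{Rep}(S_3)$ with $g$ the forgetful functor, $G(v)=V$ the two-dimensional irreducible representation, $T(v)=\C^2$. Then $\End(G|_F)=\End_{S_3}(V)=\C$ by Schur, $\Ah_F$ is the category of finite direct sums of copies of $V$ with $\End_{\Ah_F}(V)=\C$, while the $\End(G|_F)$-module $g(V)=\C^2$ has endomorphism ring $M_2(\C)$; hence $g$ is not an equivalence onto any full subcategory of $\End(G|_F)$-modules. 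Worse, restriction of scalars from $E_F=M_2(\C)$ to $\End(G|_F)=\C$ sends $\tilde T(v)=\C^2$ to the two-dimensional $\C$-module $\C^2$, and under \emph{any} equivalence of $\Ah_F$ with a subcategory of $\C$-modules (for instance $\Hom_{\Ah}(V,-)$ with inverse $V\otimes_\C -$) this corresponds to $V^{\oplus 2}$, not to $G(v)=V$, and its image under $g$ is $\C^4\neq T(v)$. So the composite fails both requirements $L\circ\tilde T\cong G$ and $g\circ L\cong f_T$. The issue is structural: the ring map you have, $\End(G|_F)\hookrightarrow E_F$, goes in the wrong direction to feed a restriction of scalars into $\Ah$, and $\End(G|_F)$ does not act naturally on arbitrary subquotients of $\bigoplus_v G(v)$, so there is no ``via $g$'' identification of $\Ah_F$ with a module category over it. Nori's actual comparison (as carried out in \cite{wangenheim}) is genuinely more delicate than restriction of scalars; the gap here is not a matter of missing details but of an argument that does not go through.
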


We are going to view $f_T$ as an extension of $T$ from $D$ to $\Ch(T)$ and
write simply $T$ instead of $f_T$.

\begin{proof}
\cite{LeNori, N}. A detailed proof is given in \cite{brug} or in \cite{wangenheim} Theorem 2.4. The condition on $R$ being noetherian is needed to ensure that
$\End(T|_F)$ is a finitely generated $R$-module.\end{proof}

The following properties hopefully allow a better understanding of the nature of
$\Ch(T)$.

\begin{prop}\label{criterion}
\begin{enumerate}
\item
As an abelian category $\Ch(T)$ is generated by the $\tilde{T}(v)$ where $v$ runs through the set of vertices of 
$D$, i.e., it agrees with its smallest full subcategory containing all such $\tilde{T}(v)$.  
\item Each object of $\Ch(T)$ is a subquotient of a finite direct sum of objects of the form $\tilde{T}(v)$.
\item If $\alpha: v \to v'$ is an edge in $D$ such that $T(\alpha)$ is an isomorphism, then 
$\tilde{T}(\alpha)$ is also an isomorphism. 
\end{enumerate}
\end{prop}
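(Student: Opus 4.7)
The plan is to first reduce to a finite subdiagram. By construction $\Ch(T) = \colim_F \Ch(T|_F)$, so every object and every morphism of $\Ch(T)$ lives in some $\Ch(T|_F)$ for $F \subset D$ finite; moreover each $\tilde{T}(v)$ for $v \in F$ is the module $T(v)$ with its tautological action of $A_F := \End(T|_F)$, and any edge $\alpha$ in $F$ gives an $A_F$-linear morphism. I would therefore fix a finite $F$ and work inside $\Ch(T|_F) = A_F\text{-}\Mod_R$, the category of finitely generated $R$-modules equipped with a compatible $A_F$-action. Throughout, write $M_F := \bigoplus_{v \in F} T(v)$, which is naturally an $A_F$-module and which, as an object of $\Ch(T|_F)$, is $\bigoplus_{v \in F}\tilde{T}(v)$.

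For parts (2) and hence (1), the key point is that $M_F$ is a \emph{faithful} $A_F$-module: by the very definition of $\End(T|_F)$ as a subring of $\prod_v \End_R(T(v))$, any $a \in A_F$ acting as zero on every $T(v)$ must itself vanish. Since $R$ lies in $A_F$ as scalar endomorphisms, $M_F$ is finitely generated as an $A_F$-module; pick generators $m_1,\dots,m_k$. Then the $A_F$-linear map $A_F \to M_F^{\,k}$, $a \mapsto (am_1,\dots,am_k)$, is injective because its kernel is $\mathrm{Ann}_{A_F}(M_F)=0$. Any object $N$ of $\Ch(T|_F)$ is finitely generated over $R$, hence over $A_F$, hence a quotient of some $A_F^{\,l}$; combining with the injection above realizes $N$ as a subquotient of $M_F^{\,kl} \cong \bigoplus_v \tilde{T}(v)^{kl}$. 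This proves (2). For (1) observe that the class of subquotients of finite direct sums of the $\tilde{T}(v)$ is closed under the operations needed, so the smallest full abelian subcategory containing all $\tilde{T}(v)$ is already all of $\Ch(T)$.

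For part (3), I would use the general principle that a faithful exact functor between abelian categories reflects isomorphisms. The functor $f_T: \Ch(T) \to \Rmod$ is faithful and exact by Proposition \ref{abelian_hull}. Given $\alpha: v \to v'$ with $T(\alpha)$ an isomorphism, form the kernel $K$ and cokernel $C$ of $\tilde{T}(\alpha)$ in $\Ch(T)$. By exactness $f_T(K) = \ker T(\alpha) = 0$ and $f_T(C) = \mathrm{coker}\, T(\alpha) = 0$; by faithfulness, applied to $\id_K$ and $\id_C$, we conclude $K = C = 0$, so $\tilde{T}(\alpha)$ is an isomorphism.

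The main obstacle is the faithfulness/finite generation argument producing the embedding $A_F \hookrightarrow M_F^{\,k}$; everything else is either a formal consequence (the subquotient statement and part (1)) or an application of the standard fact that faithful exact functors reflect isomorphisms (part (3)). No new geometric input is needed beyond the construction of $\Ch(T)$ recalled above.
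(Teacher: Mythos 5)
Your proof is correct, but it takes a genuinely different route to parts (1) and (2) than the paper does. The paper argues purely formally: it lets $\Ch'$ be the smallest full abelian subcategory containing the $\tilde{T}(v)$, observes that $\tilde{T}$ factors through $\Ch'$, invokes the universal property of $\Ch(T)$ to get $\Ch(T)\simeq\Ch'$, which is (1); then (2) follows because the full subcategory of subquotients of finite direct sums of $\tilde{T}(v)$'s is itself abelian, hence must contain $\Ch'=\Ch(T)$. You instead open up the black box and work in the explicit description $\Ch(T|_F)=\End(T|_F)\text{-}\Mod$: you prove (2) directly by exhibiting $A_F=\End(T|_F)$ as a subobject of $M_F^{\,k}$ via faithfulness, realizing every $N$ as a subquotient, and then derive (1) as a corollary. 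Your route is more concrete and gives quantitative control (you can bound the number of summands needed), at the cost of relying on the specific $\End(T|_F)$-module construction rather than the abstract universal property; the paper's route is shorter and applies verbatim to any model of the universal category. Part (3) is handled identically in both, via ``a faithful exact functor reflects isomorphisms.''

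One point in your argument that deserves slightly more care: you derive finite generation of $M_F$ over $A_F$ from finite generation over $R\subset A_F$, and then say ``pick generators $m_1,\dots,m_k$'' and claim the kernel of $a\mapsto(am_1,\dots,am_k)$ is $\mathrm{Ann}_{A_F}(M_F)$. For that identification of the kernel you should specify that the $m_i$ are chosen as \emph{$R$-module} generators: then $am_i=0$ for all $i$ together with $R$-linearity of $a$ forces $aM_F=0$. With arbitrary $A_F$-module generators this step would fail over a noncommutative $A_F$, since the set $\{a: am_i=0\}$ is only a left ideal, not a priori the two-sided annihilator. As written, the choice of $R$-generators is the natural one implied by your derivation of finite generation, so this is a matter of making the wording explicit rather than a real gap.
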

\begin{proof}
Let $\Ch' \subset \Ch(T)$ be the subcategory generated by all $\tilde{T}(v)$. By definition the representation $\tilde{T}$ factors through 
$\Ch'$. By the universal property of $\Ch(T)$ we obtain a functor $\Ch(T) \to \Ch'$, hence an equivalence of subcategories of $\Rmod$.

The second statement follows from the first criterion since the full subcategory in $\Ch(T)$ of subquotients of finite direct sums is abelian hence
agrees with $\Ch(T)$. 

The assertion on morphisms follows since the functor 
$f_T: \Ch(T) \to \Rmod$ is faithful and exact between abelian categories.
\end{proof}

\begin{thm}\label{tann}
Let $R$ be a field and
$\Ah$ be a neutral $R$-linear Tannakian category with fiber functor $T: D(\Ah) \to \Rmod$. Then $A(T)$ 
is equal to the Hopf algebra of the Tannakian dual. 
\end{thm}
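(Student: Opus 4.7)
The plan is to identify Nori's $A(T)$ with the classical coend description of the Hopf algebra $H$ of $\Aut^\otimes(T)$, and then match the Hopf structures. First I would unfold Nori's definition: for any finite subdiagram $F\subset D(\Ah)$ with vertex set $V_F$ and edge set $E_F$, the algebra $\End(T|_F)$ is the equalizer
$$\End(T|_F)=\mathrm{eq}\!\left(\prod_{v\in V_F}\End_R(T(v))\rightrightarrows\prod_{(m: p\to q)\in E_F}\Hom_R(T(p),T(q))\right),$$
with the two arrows pre-- and post--composition by $T(m)$. Since $R$ is a field and each $T(v)$ is finite-dimensional, $R$-linear dualization is exact and turns finite limits into finite colimits, so $A(F,T)=\End(T|_F)^\vee$ is the corresponding coequalizer of $\bigoplus_m T(q)\otimes T(p)^\vee \rightrightarrows \bigoplus_v T(v)\otimes T(v)^\vee$. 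Passing to the filtered colimit over all finite $F$ then exhibits $A(T)$ as the coend
$$A(T)=\int^{X\in D(\Ah)} T(X)\otimes T(X)^\vee.$$

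Next, I would invoke the standard Tannakian formalism (Saavedra, Deligne--Milne): the coalgebra underlying $H$ admits exactly the same coend description, computed over the $R$-linear abelian category $\Ah$. To equate the two coends, I would observe that a coend depends only on objects and morphisms, and $D(\Ah)$ has the same vertices and edges as $\Ah$ has objects and morphisms. The $R$-linear, additive, and abelian structure of $\Ah$ imposes no new coend relations: relations coming from $R$-linear combinations of morphisms, or from morphisms factoring through direct sums, are consequences of relations coming from individual morphisms already present in $D(\Ah)$. This gives $A(T)=H$ as coalgebras.

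Finally, I would match the Hopf structures. The comultiplication on both sides is dual to composition of endomorphisms and matches by construction. The multiplication on $H$ is dual to the tensor product of representations of $G=\Aut^\otimes(T)$; on $A(T)$ the same multiplication is built in Appendix~B from the multiplicative structure on $D(\Ah)$ inherited from $\otimes$ on $\Ah$, and the two agree by naturality of the identification above. The antipode on $H$ furnished by rigidity of $\Ah$ corresponds on $A(T)$ to the involution coming from duals in $D(\Ah)$.

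The main obstacle, as I see it, is the second step: rigorously verifying that the bare coend over $D(\Ah)$ coincides with the enriched coend over the abelian category $\Ah$. The argument is elementary once spelled out, but some bookkeeping is needed to confirm that $R$-linearity, additivity, and the abelian structure introduce no new relations beyond those coming from individual edges of $D(\Ah)$. Once this reduction is in place, matching the full Hopf structure is formal from Appendix~B and the rigidity of $\Ah$.
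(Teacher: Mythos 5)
Your proposal is correct and fills in exactly the details that the paper delegates to Deligne--Milne (the paper's entire proof is the citation ``By construction, see [DM] Theorem 2.11 and its proof''). Identifying both Nori's $A(T)$ and the classical Hopf algebra with the coend $\int^{X}T(X)\otimes T(X)^\vee$ is the right common description, and your observation that $R$-linearity and additivity of $\Ah$ add no relations beyond those coming from individual edges of $D(\Ah)$ --- since $T$ is $R$-linear and additive and the coend is computed in $\Rmod$ over a field --- is the correct resolution of the only genuine subtlety. Two small remarks: in your coequalizer the summand dual to $\Hom_R(T(p),T(q))$ should read $T(p)\otimes T(q)^\vee$ rather than $T(q)\otimes T(p)^\vee$ (a harmless convention slip), and the paper itself only establishes the bialgebra structure on $A(T)$ in Appendix~B, obtaining the antipode separately from the rigidity criterion in Appendix~C; your remark that the antipode is ``furnished by rigidity'' is therefore accurate but the involution is not literally part of the diagram structure of $D(\Ah)$ --- it comes from the fact that duals and (co)evaluation morphisms are present among the vertices and edges.
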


\begin{proof}
By construction, see \cite{LNM900} Theorem 2.11 and its proof.
\end{proof}

We need to understand the behavior under base-change.
Let $S$ be a noetherian $R$-algebra. Then $T\otimes S$ is an $S$-representation of $D$.

\begin{lemma}[Base change]\label{basechange}
Let $S$ be a flat noetherian $R$-algebra and $T:D\to\Rfree$ a representation. Let $F\subset D$ be a finite subdiagram. Then: 
\begin{enumerate}
\item 
\[ \End_S(T\otimes S|_F)=\End_R(T|_F)\otimes S\]
\item If $R$ is a field or a Dedekind domain and $T$ takes values in $\Rfree$, then
\[ A(F,T\otimes S)=A(F,T)\otimes S\ ,\hspace{1cm}A(T\otimes S)=A(T)\otimes S\ .\]
\end{enumerate}
\end{lemma}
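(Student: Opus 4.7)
The plan is to prove (1) by realizing $\End_R(T|_F)$ as an equalizer and using flatness of $S$, then to deduce (2) using duality for finitely generated projective modules together with the fact that tensor product commutes with colimits.

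For (1), observe that $\End_R(T|_F)$ fits into an equalizer diagram in $\Rmod$
\[
\End_R(T|_F) \longrightarrow \prod_{p \in F} \End_R(T(p)) \rightrightarrows \prod_{m} \Hom_R(T(\mathrm{source}(m)), T(\mathrm{target}(m)))
\]
where the second product runs over the (finitely many) edges of $F$ and the two parallel arrows send $(e_p)_p$ to $(e_{\mathrm{target}(m)} \circ T(m))_m$ and $(T(m) \circ e_{\mathrm{source}(m)})_m$ respectively. Since $F$ is finite and $T$ takes values in $\Rfree$, both products are finite direct sums of finitely generated projective $R$-modules, hence flat. Flatness of $S$ over $R$ then transports the equalizer to an equalizer over $S$. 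Combining this with the standard identifications $\End_R(M)\otimes_R S \cong \End_S(M\otimes_R S)$ and $\Hom_R(M,N)\otimes_R S \cong \Hom_S(M\otimes_R S, N\otimes_R S)$ valid for $M$ finitely generated projective, the right-hand sides match the analogous products for $T\otimes S$, yielding $\End_R(T|_F)\otimes_R S = \End_S((T\otimes S)|_F)$.

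For (2), I first invoke Proposition \ref{propB.7} (or rather, the input that goes into it): over a field the claim is trivial, and over a Dedekind domain $\End_R(T|_F)$ is a submodule of the finitely generated projective module $\prod_{p\in F}\End_R(T(p))$, hence torsion-free and finitely generated, hence projective. For such a module, the $R$-dual commutes with flat base change:
\[
\Hom_R(\End_R(T|_F), R) \otimes_R S \;\cong\; \Hom_S(\End_R(T|_F)\otimes_R S,\; S).
\]
Applying (1) to the right-hand side gives the first identity $A(F, T\otimes S) = A(F,T)\otimes_R S$. The second identity follows because $-\otimes_R S$ is a left adjoint, hence commutes with the filtered colimit over finite subdiagrams $F\subset D$.

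There is no serious obstacle: the argument is a formal consequence of flat base change, and the only place where the hypothesis on $R$ is used is to guarantee that $\End_R(T|_F)$ is itself projective so that its formation of $R$-dual commutes with $-\otimes_R S$. If one were willing to replace $A(F,T)$ by the relevant dual construction in greater generality, the Dedekind hypothesis could presumably be relaxed, but we do not need this here.
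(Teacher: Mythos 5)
Your proof is correct and follows essentially the same route as the paper: describe $\End_R(T|_F)$ by a finite-limit (kernel/equalizer) diagram, use flatness of $S$ to preserve it, use that $\Hom$ and $\End$ of finitely generated projective modules commute with base change, and in part (2) use that over a Dedekind domain $\End_R(T|_F)$ is projective (being a finitely generated torsion-free submodule of a projective) so that its $R$-dual also commutes with flat base change, then pass to the colimit. The only place you are slightly more cavalier than you should be is the parenthetical ``(finitely many) edges of $F$'': the paper's definition of a finite subdiagram only requires finitely many \emph{vertices}, so the edge set can a priori be infinite and tensoring does not commute with infinite products; one must invoke noetherianity to observe that the intersection defining $\End_R(T|_F)$ is already cut out by finitely many edges, or otherwise reduce to a finite product. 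The paper's own write-up has the same implicit gap, so this is a shared imprecision rather than a flaw specific to your argument.
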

\begin{proof} We write $T_S=T\tensor S$.

$\End(T|_F)$ is defined as a limit, i.e., a kernel
\[ 
0\to \End(T|_F)\to \prod_{p\in O(D)}\End_R(T(p))\xrightarrow{\phi} \prod_{ m \in {\rm Mor}(p,q)} \Hom_R(T(p),T(q))
\]
with $\phi(p)(m)=e_q \circ T(m)-T(m) \circ e_p$. As $S$ is flat over $R$, this remains exact after $\otimes S$.
As $T(p)$ is projective, we have
\[ 
\End_R(T(p))\otimes S=\End_S(T\otimes S(p))
\]
Hence we get
\[ 
0\to \End(T|_F)\otimes S\to \prod_{p\in O(D)}\End_S(T_S(p))\xrightarrow{\phi} \prod_{m \in {\rm Mor}(p,q)}\Hom_S(T_S(p),T_S(q))
\]
This is the defining sequence for $\End(T_S|_F)$. This finishes the proof of the first statement.

Recall that $A(F,T)=\Hom_R(\End(T|_F),R)$. Both $R$ and $\End_R(T|_F)$ are projective because $R$ is now a field or a Dedekind domain. Hence
\[ \Hom_R(\End_R(T|_F),R)\tensor S\isom\Hom_S(\End_R(T|_F)\tensor S,S)\isom\Hom_S(\End_S((T_S)|_F),S)  .\]
This is nothing but $A(F,T_S)$.

Tensor products commute
with direct limit, hence the statement for $A(T)$ follows immediately.
\end{proof}

\subsection{Multiplicative structure}

Construction and properties of the tensor structure are
not worked out in detail in \cite{N}, \cite{N1} or \cite{LeNori}. In
particular, we were puzzled by the question how the
graded commutativity of the K\"unneth formula is dealt with 
in the construction. The following is an attempt to
clarify this on the formal level. The first version of this
section contained a serious mistake. We are particularly thankful to
Gallauer for pointing out both the mistake and the correction.

Recall that $\Rfree$ is the category of projective $R$-modules of finite
type for a fixed noetherian ring $R$.

\begin{defn}Let $D_1,D_2$ be small diagrams. Then
$D_1\times D_2$ is the small diagram with vertices
of the form $(f,g)$ for $f$ a vertex of $D_1$, $g$ a vertex of $D_2$, and with edges of the form
$(\alpha,\id)$ and $(\id,\beta)$ for $\alpha$ an edge of $D_1$ and $\beta$ an edge of $D_2$ and
with $\id=(\id,\id)$. 
\end{defn}

\begin{rem} Levine in \cite{LeNori} p.466 seems to define
$D_1\times D_2$ by taking the product of the graphs in the ordinary sense.
He claims  (in the notation of loc. cit.)
a map of diagrams 
\[ 
H_*\mathrm{Sch}'\times H_*\mathrm{Sch}'\to H_*\mathrm{Sch}'.
\]
We do not understand it on general pairs of edges. If $\alpha,\beta$ are
edges corresponding to boundary maps and hence lower the degree by $1$, then we would expect $\alpha\times \beta$ to lower the degree by $2$. However, there
are no such edges in $H_*\mathrm{Sch}'$.

Our restricted version of products of diagrams is enough to get
the implication.
\end{rem}

\begin{defn}\label{graded}
A {\em graded diagram} is a small diagram $D$ together with a map 
\[ 
|\cdot|: \{ \text{vertices of $D$}\ \}\to \Z/2\Z\ .\]
For an edge $\gamma:e\to e'$ we put $|\gamma|=|e|-|e'|$.
If $D$ is a graded diagram, $D\times D$ is equipped with the grading $|(f,g)|=|f|+|g|$.

A {\em commutative product structure} on a graded $D$ is a map of graded diagrams
\[ 
\times:D\times D\to D
\] 
together with choices of edges
\begin{align*}
\alpha_{f,g}:f\times g&\to g\times f\\
\beta_{f,g,h}:f\times(g\times h)&\to (f\times g)\times h\\
\beta'_{f,g,h}:(f\times g)\times h&\to f\times (g\times h)
\end{align*}
for all vertices $f,g,h$ of $D$.

A {\em graded multiplicative representation} $T$  of a graded diagram with commutative product
structure is a representation of $T$ in $\Rfree$ together with a choice of isomorphism  
\[
\tau_{(f,g)}: T(f\times g)\to   T(f)\otimes T(g)
\]
such that:
\begin{enumerate}
\item
The composition
\[ 
T(f)\otimes T(g)\xrightarrow{\tau_{(f,g)}^{-1}}T(f\times g)\xrightarrow{T(\alpha_{f,g})}T(g\times f)\xrightarrow{\tau_{(g,f)}}T(g)\otimes T(f)
\]
is $(-1)^{|f||g|}$ times the natural map of $R$-modules.
\item If $\gamma:f\to f'$ is an edge, then the diagram
\[\begin{CD}
T(f\times g)@>T(\gamma\times\id)>>T(f'\times g)\\
@V\tau VV @VV\tau V\\
T(f)\otimes T(g)@>(-1)^{|\gamma||g|}T(\gamma)\otimes \id>>T(f')\otimes T(g)
\end{CD}\]
commutes.
\item 
If $\gamma:f\to f'$ is an edge, then the diagram
\[\begin{CD}
T(g\times f)@>T(\id\times\gamma)>>T(g\times f')\\
@V\tau VV @VV\tau V\\
T(g)\otimes T(f)@>\id\otimes T(\gamma)>>T(g)\otimes T(f')
\end{CD}\]
commutes.
\item The diagram 
\[\begin{CD}
T(f\times (g\times h))@>T(\beta_{f,g,h})>> T((f\times g)\times h))\\
@VVV @VVV\\
T(f)\otimes T(g\times h)@.T(f\times g)\otimes T(h)\\
@VVV @VVV\\
T(f)\otimes ( T(g)\otimes T( h))@>>> (T(f)\otimes T(g))\otimes T(h)\\
\end{CD}\]
commutes under the standard identification
\[ 
T(f)\otimes\left( T(g)\otimes T(h)\right)\isom \left(T(f)\otimes T(g)\right)\otimes T(h).
\]
\item The maps $T(\beta_{f,g,h})$ and $T(\beta'_{f,g,h})$ are
inverse to each other.
\end{enumerate}
A {\em unit} for a graded diagram with commutative product structure $D$ is a vertex $\unit$ of degree $0$ together with a choice of edges
\[ u_f:f\to \unit\times f\]
for all vertices of $f$. A graded representation is {\em unital} if
$T(u_f)$ is an isomorphism for all vertices $f$.
\end{defn}
\begin{rem}
In particular, $T(\alpha_{f,g})$ and $T(\beta_{f,g,h})$ are isomorphisms. If $f=g$ then $T(\alpha_{f,f})=(-1)^{|f|}$.
If $\unit$ is a unit, then $T(\unit)$ satisfies $T(\unit)\isom T(\unit)\tensor T(\unit)$. Hence it is a free $R$-module of rank $1$.
\end{rem}

\begin{prop}\label{tensorcat}
Let $D$ be a graded diagram with commutative product structure with unit and
$T$ a unital graded representation of $D$ in $\Rfree$. 
\begin{enumerate}
\item
Then
$\Ch(T)$ is a commutative and associative tensor category with unit and $T:\Ch(T)\to\Rmod$ is a tensor functor. 
\item
If in addition $R$ is a field or a Dedekind domain, the coalgebra
$A(T)$
carries a natural structure of
commutative
bialgebra (with unit and counit).
\end{enumerate}
\end{prop}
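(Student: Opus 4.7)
The plan is to transport the product structure from $D$ to a tensor bifunctor on $\Ch(T)$ via the universal property of Proposition \ref{abelian_hull}, and then to dualize into a bialgebra structure on $A(T)$ using Proposition \ref{propB.7}. The signs built into Definition \ref{graded} are chosen so that, after an explicit sign normalization of the symmetry constraint, $\Ch(T)$ becomes an ordinary (not super) symmetric monoidal category and $T$ an ordinary tensor functor.

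For the tensor bifunctor, I would introduce two representations of $D\times D$ in $\Rmod$: the outer tensor $T\boxtimes T$ with edges $(\gamma,\id)\mapsto T(\gamma)\otimes\id$ and $(\id,\gamma)\mapsto (-1)^{|\gamma||g|}\id\otimes T(\gamma)$, and the composite $T\circ\times$; by Definition \ref{graded}(2)--(3) the family $\tau$ exhibits these as naturally isomorphic. Lifting along $\tilde T\circ\times:D\times D\to\Ch(T)$ and invoking the universal property yields an exact $R$-linear functor $F:\Ch(T\boxtimes T)\to\Ch(T)$. An external product $\Ch(T)\times\Ch(T)\to\Ch(T\boxtimes T)$, constructed below at the level of endomorphism algebras, composes with $F$ to produce the sought bifunctor $\otimes$, with $\tilde T(f)\otimes\tilde T(g)=\tilde T(f\times g)$. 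The unit is $\tilde T(\unit)$, invertible since $T(\unit)$ is free of rank one. The unitor, associator, and symmetry on representables are $\tilde T(u_f)$, $\tilde T(\beta_{f,g,h})$, and $(-1)^{|f||g|}\tilde T(\alpha_{f,g})$ respectively, extended to all of $\Ch(T)$ via Proposition \ref{criterion} together with bi-exactness. The pentagon, hexagon and unit axioms reduce, through faithfulness of $f_T$, to identities in $\Rmod$ forced by Definition \ref{graded}(1)--(4).

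The technical heart is a finite-subdiagram computation: for $F_1,F_2\subset D$ finite and $F\subset D$ containing the image of $\times|_{F_1\times F_2}$, restriction induces a ring homomorphism
\[
\End(T|_F)\longrightarrow\End(T|_{F_1})\otimes_R\End(T|_{F_2}),
\]
because $\End(P\otimes_R Q)\isom\End(P)\otimes_R\End(Q)$ for $P,Q\in\Rfree$ and because a family of endomorphisms commuting with all sign-twisted product edges must decompose as a sum of elementary tensors of families commuting with their respective edges. For part (2), under the Dedekind or field hypothesis Proposition \ref{propB.7} identifies $\Ch(T)$ with $A(T)$-comodules; $R$-dualizing the above homomorphisms yields $A(F_1,T)\otimes A(F_2,T)\to A(F,T)$ which assemble in the colimit into an associative, unital, commutative multiplication on $A(T)$. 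The bialgebra axiom is equivalent to the statement that the tensor product of $A(T)$-comodules is again an $A(T)$-comodule, which follows tautologically from the construction.

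I expect the main obstacle to be systematic sign bookkeeping — precisely the step the authors note went wrong in the first version of the paper — together with the verification that the external product $\Ch(T)\times\Ch(T)\to\Ch(T\boxtimes T)$ is well-defined and bi-exact over a general noetherian $R$, and not only over a field. A secondary subtlety is coherence: one must check that the functor produced by the universal property is compatible not just with objects but with the associativity and symmetry morphisms, which again reduces to the corresponding identities in $\Rmod$ via the faithful fiber functor.
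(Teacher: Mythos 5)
Your proposal is essentially correct and, at its technical core, the same as the paper's: both hinge on the map at the level of endomorphism algebras
\[
\mu^*_F:\End(T|_{F'})\longrightarrow \End(T|_F)\otimes_R\End(T|_F),
\qquad a\longmapsto\bigl(\tau\circ a_{f\times g}\circ\tau^{-1}\bigr)_{(f,g)},
\]
with the well-definedness checked against edges $\alpha\times\id$ and $\id\times\beta$ and the signs of Definition~\ref{graded} cancelling exactly as you describe. What you do differently is the packaging: you route through an intermediate diagram category $\Ch(T\boxtimes T)$ on $D\times D$, obtain the functor $\Ch(T\boxtimes T)\to\Ch(T)$ from the universal property, and then compose with an external product. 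The paper instead constructs the tensor product of two $\End(T|_F)$-modules directly via $\mu^*_F$ and verifies cocommutativity, coassociativity and the counit by explicit diagram chases; that direct route gives part~(2) at no extra cost, since $\mu^*_F$ is literally the (dual of the) product on $A(T)$, whereas your route requires a separate dualization step. Your framing is cleaner for coherence (pentagon, hexagon, unit laws reduce to $\Rmod$ via faithfulness of $f_T$), while the paper's is shorter for the bialgebra claim.

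One point deserves more care than either your sketch or the paper's text gives it. You claim that a family commuting with all sign-twisted product edges ``must decompose as a sum of elementary tensors,'' i.e.\ that the natural inclusion
\[
\End(T|_{F_1})\otimes_R\End(T|_{F_2})\hookrightarrow\End\bigl(T|_{F_1}\boxtimes T|_{F_2}\bigr)
\]
is onto. This is what makes your external product land in $\Ch(T\boxtimes T)$, and it is also what the paper implicitly uses when it asserts that $\mu^*_F(a)$ is ``a well-defined element of $\End(T|_F)\otimes\End(T|_F)$'' after checking only the compatibility diagrams. Over a field it follows because $-\otimes_R V$ is exact and the equalizer defining $\End(T|_F)$ commutes with tensor; over a Dedekind domain one gets it because $\End(T|_F)$ is a finitely generated torsion-free, hence projective, submodule; over a general noetherian $R$ one only needs the map $\mu^*_F$, and the paper indeed never uses the surjectivity, so your ``external product'' step would need to be reformulated to use the map $\End(T|_{F'})\to\End(T|_{F_1})\otimes\End(T|_{F_2})$ directly (as the paper does) rather than passing through an honest object of $\Ch(T\boxtimes T)$. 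Apart from this, the reduction of (2) to (1) by dualizing $\mu^*_F$ via Proposition~\ref{propB.7} and passing to the colimit is exactly the paper's argument.
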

The unit object is going to be denoted $\unit$.

\begin{proof}
We consider finite diagrams
$F$ and $F'$ such that
\[ 
\{ f\times g| f,g\in F\}\subset F'\ .
\]
We are going to define natural maps
\[ 
\mu_F^*:\End(T|_{F'})\to \End(T|_F)\otimes \End(T|_F).
\]
Assume this for a moment.
Let $X,Y\in \Ch(T)$. We want to define $X\otimes Y$ in $\Ch(T)=\colim_F\Ch(T|_F)$. Let $F$ such that
$X,Y\in\Ch(T|_F)$. This means that $X$ and $Y$ are finitely generated
$R$-modules with an action of $\End(T|_F)$. 
We equip the $R$-module $X\otimes Y$ with
a structure of $\End(T|_F')$-module. It is given by
\[ \End(T|_F')\tensor X\tensor Y\to \End(T|_F)\tensor \End(T|_F)\tensor X\tensor Y\to X\tensor Y\] 
where we have used the comultiplication map $\mu^*_F$ and the module structures of $X$ and $Y$.
This will be independent of the choice of $F$ and $F'$. Properties
of $\tensor$ on $\Ch(T)$ follow from properties of $\mu^*_F$.

If $R$ is a field or a Dedekind domain, let
\[\mu_F:A(F,T)\tensor A(F,T)\to A(F',T)\]
be dual to $\mu_F^*$. Passing to the direct limit defines a multiplication
$\mu$ on $A(T)$.

We now turn to the construction of $\mu_F^*$.
Let $a\in\End(T|_{F'})$, i.e., a compatible system of
endomorphisms $a_f\in \End(T(f))$ for $f\in F'$. We
describe its image $\mu^*_F(a)$. Let $(f,g)\in F\times F$. The isomorphism
\[ 
\tau:T(f\times g)\to T(f)\otimes T(g)
\]
induces an isomorphism
\[
\End (T(f\times g))\isom \End(T(f))\otimes \End(T(g)).
\]
We define the $(f,g)$-component of $\mu^*(a)$ by the image
of $a_{f\times g}$ under this isomorphism. 

In order to show that this is a well-defined element of 
$\End(T|_F)\otimes\End(T|_F)$, we need to check that diagrams of the form
\[
\begin{xy}\xymatrix{
T(f)\otimes T(g)\ar[r]^{\mu^*(a)_{(f,g)}}\ar[d]_{T(\alpha)\otimes T(\beta)}&T(f)\otimes T(g)\ar[d]^{T(\alpha)\otimes T(\beta)}\\
T(f')\otimes T(g')\ar[r]_{\mu^*(a)_{(f',g')}}&T(f')\otimes T(g')
}\end{xy}
\]
commute for all edges $\alpha:f\to f'$, $\beta:g\to g'$ in $F$.
We factor 
\[ 
T(\alpha)\otimes T(\beta)= (T(\id)\otimes T(\beta))\circ (T(\alpha)\circ T(\id))
\]
and check the factors separately.

Consider the diagram 
\[
\begin{xy}\xymatrix{
T(f\times g)\ar[rrr]_{a_{f\times g}}\ar[ddd]_{T(\alpha\times\id)}\ar[rd]^\tau&&&T(f\times g)\ar[ld]_\tau\ar[ddd]^{T(\alpha\times\id)}\\
&T(f)\otimes T(g)\ar[r]^{\mu^*(a)_{(f,g)}}\ar[d]_{T(\alpha)\otimes T(\id)}&T(f)\otimes T(g)\ar[d]^{T(\alpha)\otimes T(\id)}\\
&T(f')\otimes T(g)\ar[r]_{\mu^*(a)_{(f',g)}}&T(f')\otimes T(g)\\
T(f'\times g)\ar[rrr]^{a_{f'\times g}}\ar[ru]^\tau&&&T(f'\times g)\ar[ul]_\tau
}\end{xy}
\]
The outer square commutes because $a$ is a diagram endomorphism. Top and
bottom commute by definition of $\mu^*(a)$. Left and right commute
by property (3) up to the same sign  $(-1)^{|g||\alpha|}$.
Hence the middle square commutes without signs. 
The analogous diagram for  $\id\times\beta$ commutes on the nose.
Hence $\mu^*(a)$ is well-defined.

We now want to compare the $(f,g)$-component to the $(g,f)$-component.
Recall that there is a distinguished edge $\alpha_{f,g}:f\times g\to g\times f$. Consider the diagram
\[
\begin{xy}\xymatrix{
&T(f)\otimes T(g)\ar[r]^{\mu^*(a)_{(f,g)}} \ar[ddd]
& T(f)\otimes T(g) \ar[ddd]\\
T(f\times g)\ar[ru]^\tau\ar[d]_{T(\alpha_{f,g})}\ar[rrr]^{a_{f\times g}}&&&T(f\times g)\ar[lu]_\tau\ar[d]_{T(\alpha_{f,g})}\\
T(g\times f)\ar[rd]_\tau\ar[rrr]^{a_{f\times g}}&&&T(g\times f)\ar[ld]^\tau\\
&T(g)\otimes T(f)\ar[r]_{\mu^*(a)_{(g,f)}}&T(g)\otimes T(f)
} \end{xy}
\]
By the construction of $\mu^*(a)_{(f,g)}$ (resp. $\mu^*(a)_{(g,f)})$ the upper (resp. lower) tilted square
commutes. By naturality the middle rectangle with $\alpha_{f,g}$ commutes.
By property (1) of a representation of a graded diagram with
commutative product, the left and right faces commute
where the vertical maps are $(-1)^{|f||g|}$
times the natural commutativity of tensor products of $T$-modules. 
Hence the inner square also commutes without
the sign factors. This is cocommutativity of $\mu^*$.

The associativity assumption (3) for representations of diagrams with
product structure implies the coassociativity of $\mu^*$.

The compatibility of multiplication and comultiplication
is built into the definition.

In order to define a unit object in $\Ch(T)$ it suffices to define
a counit for $\End(T|_F)$. Assume $\unit\in F$. The counit
\[ u^*: \End(T|_F)\subset \prod_{f\in F}\End(T(f))\to \End(T(\unit))=R\]
is the natural projection. The assumption on unitality of $T$ allows
to check that the required diagrams commute. 
\end{proof}
\begin{rem}\label{weak_product}
The proof of Proposition \ref{tensorcat} works without any changes in the arguments when
we weaken the assumptions as follows: in Definition \ref{graded} replace
$\times$ by a map of diagrams
\[ \times:D\times D\to\Ph(D)\]
where $\Ph(D)$ is the path category of $D$: objects are the vertices of
$D$ and morphisms the paths. A representation $T$ of $D$ extends canonically
to a functor on $\Ph(D)$.
\end{rem}

\subsection{Localization}\label{ssectB.3}

The purpose of this section is to give a diagram version of the localization
of a tensor category with respect to one object, i.e., a distinguished object
$X$ becomes invertible with respect to tensor product. This is the standard
construction used to pass e.g. from effective Chow motives to all motives.
Again we thank Gallauer for pointing out a mistake in the original version as well as the correction.
 
We restrict to the case when $R$ is a field or a Dedekind domain and all representations of diagrams take values in $\Rfree$.

\begin{defn}[Localization of diagrams]\label{localize}

Let $D^\eff$ be a graded diagram with a commutative product structure with unit
$\unit$. Let $f_0\in D^\eff$ be a vertex. The \emph{localized diagram} $D$ has
vertices and edges as follows:
\begin{enumerate}
\item for every $f$ a vertex of $D^\eff$ and
$n\in\Z$ a vertex denoted $f(n)$;
 \item for every edge $\alpha:f\to g$ in $D^\eff$ and every $n\in\Z$, 
an edge denoted  $\alpha(n):f(n)\to g(n)$ in $D$;
\item for every vertex $f$ in $D^\eff$ and every $n\in\Z$ an edge 
denoted $\gamma_{f,n}:(f\times f_0)(n) \to f(n+1)$.
\end{enumerate}
 Put $|f(n)|=|f|$. 

We equip $D$ with a weak commutative product structure in the sense of Remark~\ref{weak_product} 
\[ \times:D\times D\to \Ph(D)\hspace{1cm}(f(n), g(m))\mapsto f\times g (n+m)\]
together with
\begin{align*} \alpha_{f(n),g(m)}&=\alpha_{f,g}(n+m)\\
              \beta_{f(n),g(m),h(r)}&=\beta_{f,g,h}(n+m+r)\\
	      \beta'_{f(n),g(m),h(r)}&=\beta'_{f,g,h}(n+m+r)
\end{align*}
Let $\unit(0)$ together with
\[ u_{f(n)}=u_f(n)\]
be the unit. 
\end{defn}
Note that there is a natural inclusion of multiplicative diagrams $D^\eff\to D$ which
maps a vertex $f$ to $f(0)$.

\begin{rem}
\begin{enumerate}
\item The above definition does not spell out $\times$ on edges.
It is induced from the product structure on $D^\eff$ for edges of type (2). For edges of type (3) there is an obvious sequence
of edges.
We take their composition in $\Ph(D)$. E.g. for $\gamma_{f,n}:(f\times f_0)(n)\to f(n+1)$ and $\id=\id(m):g(m)\to g(m)$ we have
\[ \gamma_{f,n}\times \id(m):(f\times f_0)(n)\times g(m)\to f(n+1)\times g(m)\]
via
\begin{multline*}
 (f\times f_0)(n)\times g(m)= ((f\times f_0)\times g)(n+m)\\
\xrightarrow{\beta'_{f,f_0,g}(n+m)} (f\times(f_0\times g))(n+m)\\
\xrightarrow{\id\times \alpha_{f_0,g}(n+m)} (f\times (g\times f_0))(n+m)\\
\xrightarrow{\beta_{f,g,f_0}(n+m)} ((f\times g)\times f_0)(n+m)\\
\xrightarrow{\gamma_{f\times g,n+m}} (f\times g)(n+m+1)
=f(n+1)\times g(m)\ .
\end{multline*}

\item In an earlier version, we had to assume $f_0$ even. We were
able to drop the condition by shifting the sign condition
in the definition of a multiplicative diagram to the first variable.
\end{enumerate}
\end{rem}

\begin{assump}\label{localassump}
Let $T$ be a graded multiplicative unital representation of $D^\eff$ with values in $\Rfree$ such that 
$T(f_0)$ is locally free of rank $1$ as $R$-module. 
\end{assump}

\begin{lemma}
$T$ extends uniquely to a graded multiplicative representation of $D$  
such that $T(f(n))=T(f) \otimes T(f_0)^{\otimes n}$ for all vertices
and $T(\alpha(n)=T(\alpha)\tensor T(\id)^{\tensor n}$ for all edges. 
It is multiplicative and unital with the choice
\[\begin{CD} 
T(f(n)\times g(m))@>\tau_{f(n),g(m)}>> T(f(n))\tensor T(g(m))\\
@V{\tau_{f,g}(n+m)}VV@VV=V\\
T(f)\tensor T(g)\tensor T(f_0)^{\tensor n+m}@>\isom >>
 T(f)\tensor T(f_0)^{\tensor n}\tensor T(g)\tensor T(f_0)^{\tensor m}
\end{CD}\]
where the last line is the natural isomorphism.
\end{lemma}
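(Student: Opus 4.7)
The plan is to extend $T$ in stages --- first on vertices, then on type (ii) edges, then on the new type (iii) edges --- and then to verify the axioms of a graded, multiplicative, unital representation by reducing to the corresponding axioms for $T$ on $D^\eff$.

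Since $T(f_0)$ is locally free of rank one, the tensor powers $T(f_0)^{\tensor n}$ are defined for every $n \in \Z$ (using the dual for negative $n$). On vertices I set $T(f(n)) := T(f) \tensor T(f_0)^{\tensor n}$; on type (ii) edges I set $T(\alpha(n)) := T(\alpha) \tensor \id_{T(f_0)^{\tensor n}}$. For a type (iii) edge $(f \times f_0)(n) \to f(n+1)$, I use the isomorphism
\[ \tau_{f,f_0} \tensor \id_{T(f_0)^{\tensor n}} \colon T(f \times f_0) \tensor T(f_0)^{\tensor n} \to T(f) \tensor T(f_0)^{\tensor n+1}. \]
Uniqueness of the extension follows because the values on vertices and on type (ii) edges are prescribed, and the requirement that the extension be compatible with the multiplicative structure (specifically with $\tau_{f, f_0}$) then forces the values on type (iii) edges.

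Next I define $\tau_{f(n), g(m)}$ by the displayed diagram of the lemma and verify axioms (1)--(4) of Definition \ref{graded}. The key point is that $\alpha_{f(n),g(m)} = \alpha_{f,g}(n+m)$ and $\beta_{f(n),g(m),h(r)} = \beta_{f,g,h}(n+m+r)$ are inherited from $D^\eff$, so the extra tensor factors $T(f_0)^{\tensor(\cdot)}$ play the role of spectators. Since $|f(n)| = |f|$, the sign $(-1)^{|f||g|}$ in axiom (1) is preserved. Axioms (2) and (3) split by edge type: for edges of type (ii) they reduce directly to the effective axioms, while for edges of type (iii) they reduce to axiom (4) of $T$ on $D^\eff$ via the coherence of $\tau_{-,f_0}$. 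Axiom (4) on $D$ and unitality ($T(u_{f(n)}) = T(u_f) \tensor \id$) are immediate.

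The main technical hurdle is the even-degree hypothesis on $f_0$: without it, tensor powers $T(f_0)^{\tensor n}$ would contribute nontrivial signs when commuting past other factors, breaking axiom (1) of the multiplicative structure (and, incidentally, the well-definedness of $\tau_{f(n), g(m)}$). The other bookkeeping obstacle is the interaction of type (iii) edges with the product, where one must repeatedly invoke axiom (4) of the effective representation in order to reassociate tensor factors in a coherent way.
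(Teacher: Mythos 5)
Your proposal is correct and takes essentially the same route as the paper, which disposes of the lemma in two sentences (``Define $T$ on the vertices and edges of $D$ via the formula. The conditions of $\tau_{f(n),g(m)}$ are satisfied because $f_0$ is even.''). You supply exactly the details the paper omits: the explicit assignment $\tau_{f,f_0}\tensor\id_{T(f_0)^{\tensor n}}$ on the localization edges of type (iii) (which the lemma's displayed formula does not cover and which the paper leaves implicit under ``via the formula''), the case split of axioms (2)--(3) by edge type, and the role of the even-degree hypothesis. One small refinement worth noting: because $T(f_0)$ is locally free of rank one, the swaps $T(f_0)^{\tensor n}\tensor T(g)\cong T(g)\tensor T(f_0)^{\tensor n}$ in $\Rfree$ are literal identities under canonical identifications, so no actual signs appear in the target category; the even-degree hypothesis is really needed so that the diagram-side degrees $|f(n)|=|f|$ make the type (iii) edges degree-preserving, keeping axioms (1) and (3) consistent with the chosen $\tau_{f(n),g(m)}$ without spurious $(-1)^{|f_0|}$ factors.
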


\begin{proof} Define $T$ on the vertices and edges of $D$ via the formula. 
It is tedious but straightforward to check the conditions.
\end{proof}

\begin{prop}\label{localcat}
Let $D^\eff,D$ and $T$ be as above. Let $A$ and $A^\eff$ be the corresponding bialgebras. Then:
\begin{enumerate}
\item
$\Ch(D,T)$ is
the localization of $\Ch(D^\eff,T)$ with respect to the object
$\tilde{T}(f_0)$.
\item  Let $\chi\in \End(T(f_0))^\vee=A(\{f_0\},T)$ be the dual of ${\rm id} \in \End(T(f_0))$. We view it in $A^\eff$.
Then $A=A^\eff_\chi$ (localization of algebras).
\end{enumerate}
\end{prop}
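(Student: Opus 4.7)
The plan is to prove (2) by producing mutually inverse bialgebra maps between $A$ and $A^\eff_\chi$, then deduce (1) via the comodule description of Proposition \ref{propB.7}.

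To build a map $\psi: A^\eff_\chi \to A$, I start from the bialgebra morphism $\phi: A^\eff \to A$ induced by the inclusion of graded multiplicative diagrams $D^\eff \hookrightarrow D$, $f \mapsto f(0)$ (noted after Definition \ref{localize}), and show that $\phi(\chi)$ is invertible in $A$. The key observation is that since $T(f_0)$ is rank one, $\chi$ is grouplike in $A^\eff$; symmetrically, the vertex $f_0(-1)$ of $D$ provides a grouplike $\chi^{-1} \in A$ because $T(f_0(-1)) = T(f_0)^{\otimes -1}$ is also rank one. The twist edge $(\unit \times f_0)(-1) \to \unit(0)$ from Definition \ref{localize}, composed with the unit edge $u_{f_0}: f_0 \to \unit \times f_0$, is sent by $T$ to the canonical isomorphism $T(f_0) \otimes T(f_0)^{-1} \to R$, which translates into $\chi \cdot \chi^{-1} = 1$ in $A$. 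Hence $\phi$ extends uniquely through the localization to the desired $\psi$.

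For the inverse map, I will compute $A = \colim_F A(F,T)$ along a cofinal family of finite subdiagrams of $D$ of the form $F_{k,N} := \{f(n) : f \in F^\eff_k, \, -N \le n \le N\}$ with all inherited edges from $D$, where $F^\eff_k$ runs over finite subdiagrams of $D^\eff$ containing $\unit$ and $f_0$ and closed under $\times f_0$ within the bounds. Because $T(f_0)$ is locally free of rank one, there are canonical identifications $\End(T(f(n))) = \End(T(f))$, under which an element of $\End(T|_{F_{k,N}})$ becomes a family $(e^{(n)})_{|n| \le N}$ of elements of $\End(T|_{F^\eff_k})$ satisfying the twist-edge relation $e^{(n+1)}_f = e^{(n)}_{f \times f_0}$. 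Dualizing these identifications and relations and passing to the colimit expresses every matrix coefficient of $A$ as one from $A^\eff$ together with a power of $\chi^{-1}$, yielding the inverse bialgebra map $A \to A^\eff_\chi$. Verifying that this map and $\psi$ are mutually inverse is then a direct check on generators.

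For (1), Proposition \ref{propB.7} identifies $\Ch(D,T)$ and $\Ch(D^\eff,T)$ with the categories of finitely generated $R$-comodules over $A$ and $A^\eff$ respectively. Since $\chi$ is the grouplike attached to the rank-one comodule $\tilde T(f_0)$, passing from $A^\eff$-comodules to $A^\eff_\chi$-comodules matches the tensor-categorical localization that formally inverts $\tilde T(f_0)$, which is exactly the claim. The main obstacles will be the cofinality verification for the $F_{k,N}$ and the careful bookkeeping in the inverse-map construction; correctly tracking the sign conventions of Definition \ref{graded} is also crucial, but the hypothesis that $|f_0|$ is even ensures the $\tau$ identifications for tensor products involving $f_0$ are sign-free, so the rank-one trivializations are compatible with the tensor structure throughout.
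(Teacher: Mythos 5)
Your proposal takes a genuinely different route from the paper, and I think it contains both a small labeling error and a more serious unsubstantiated step.

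First, the minor point: you write that $T(f_0(-1)) = T(f_0)^{\otimes -1}$. By the extension lemma, $T(f(n)) = T(f)\otimes T(f_0)^{\otimes n}$, so $T(f_0(-1)) = T(f_0)\otimes T(f_0)^{\otimes -1}\cong R$, which is the trivial rank-one module; the associated grouplike of $f_0(-1)$ is $1$, not $\chi^{-1}$. The vertex you actually want is $\unit(-1)$, since $T(\unit(-1)) = T(f_0)^{\otimes -1}$. The rest of that paragraph then goes through (the composite of the unit edge and the twist edge shows $\tilde T(f_0)\otimes\tilde T(\unit(-1))\cong\unit$ in $\Ch(D,T)$), so $\phi(\chi)$ is indeed invertible and $\psi: A^\eff_\chi\to A$ exists. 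That part is a valid and pleasant observation.

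The real gap is the inverse direction. Your analysis of $\End(T|_{F_{k,N}})$ as families $(e^{(n)})_{|n|\le N}$ of elements of $\End(T|_{F_k^\eff})$ satisfying the twist-edge relation $e^{(n+1)}_f = e^{(n)}_{f\times f_0}$ is essentially correct, but the crucial claim — that dualizing and passing to the colimit "expresses every matrix coefficient of $A$ as one from $A^\eff$ together with a power of $\chi^{-1}$" — is asserted, not proved. Note that the twist relation only determines $e^{(n+1)}_f$ from $e^{(n)}$ for those $f$ with $f\times f_0\in F_k^\eff$; since a finite diagram cannot be honestly closed under $\times f_0$ (the sequence $f, f\times f_0, f\times f_0^{\times 2},\dots$ is infinite), one never fully determines $e^{(n+1)}$ from $e^{(n)}$ at any finite stage, and the resulting system of relations in the dual is delicate. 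The definitional phrase "closed under $\times f_0$ within the bounds" also needs to be made precise before cofinality and compatibility of the bialgebra multiplication across the family can be checked.

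The paper sidesteps all of this by using the one-sided slices $D^{\geq n}=\{f(m): m\geq n\}$ rather than your symmetric windows. The shift $f(m)\mapsto f(m+1)$ is an isomorphism of diagrams $D^{\geq n}\to D^{\geq n+1}$; representing $D^{\geq 0}$ in $\Ch(D^\eff,T)$ by $f(m)\mapsto\tilde T(f)\otimes\tilde T(f_0)^{\otimes m}$ and invoking the universal property gives $\Ch(D^\eff,T)\cong\Ch(D^{\geq 0},T)$, hence $A^\eff\cong A^{\geq 0}\cong A^{\geq n}$ for every $n$ as coalgebras. The whole colimit $A=\colim_n A^{\geq n}$ then reduces to $\colim_{\chi\cdot} A^\eff$, once one checks (as the paper does) that the transition map $A^{\geq n}\to A^{\geq n-1}$, viewed on $A^\eff$, is multiplication by $\chi$, coming on the endomorphism side from $a\mapsto a\otimes\id$ via $\End(T(f))\to\End(T(f\times f_0))\cong\End(T(f))\otimes\End(T(f_0))$. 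This is conceptually what your family $(e^{(n)})$ "wants" to say, but the one-sided version makes the transition maps clean and the identification with the algebra localization immediate. If you want to keep your strategy, the most direct repair is to replace the $F_{k,N}$ by finite subdiagrams of a single $D^{\geq n}$ and prove the diagram-level identification $D^{\geq n}\cong D^\eff$ first; at that point you will essentially have reconstructed the paper's argument.

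Finally, your deduction of (1) from (2) via the comodule description is reasonable in spirit, but in the paper (1) is established directly as a byproduct of the chain $\Ch(D^\eff,T)\cong\Ch(D^{\geq 0},T)\hookrightarrow\Ch(D^{\geq -1},T)\hookrightarrow\cdots$, which is again cleaner than invoking a general fact about localizing a bialgebra at a grouplike.
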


\begin{proof} Let $D^{\geq n}\subset D$ be the subdiagram with vertices
of the form $f(n')$ with $n'\geq n$. Clearly, $D=\colim_n D^{\geq n}$
and hence
\[ 
\Ch(D,T)\isom \colim_n\Ch(D^{\geq n},T)\ .
\]
Consider the morphism of diagrams
\[ 
D^{\geq n}\to D^{\geq n+1},\hspace{1ex} f(m)\mapsto f(m+1). 
\]
It is clearly an isomorphism. We equip $\Ch(D^{\geq n+1})$ with
a new fibre functor $f_T\tensor T(f_0)^\vee$. It is faithful exact.
The map $f(m)\mapsto \tilde{T}(f(m+1))$ is a representation of (Index) $D^{\geq n}$ in the abelian category
$\Ch(D^{\geq n+1},T)$ with fibre functor $f_T\tensor T(f_0)^\vee$. By the universal property this induces a functor
\[ \Ch(D^{\geq n},T)\to \Ch(D^{\geq n+1},T)\ .
\]
The converse functor is constructed in the same way. Hence 
\[ \Ch(D^{\geq n},T)\isom \Ch(D^{\geq n+1},T), \hspace{0.5cm}A^{\geq n}\isom A^{\geq n+1}.
\]
The map of graded diagrams with commutative product and unit
\[ D^\eff\to D^{\geq 0}\]
induces an equivalence on tensor categories. Indeed, we represent $D^{\geq 0}$
in $\Ch(D^\eff,T)$ by mapping $f(m)$ to $\tilde{T}(f)\otimes \tilde{T}(f_0)^m$.
By the universal property, this implies that there is a faithful exact
functor 
\[ 
\Ch(D^{\geq 0},T)\to \Ch(D^\eff,T)
\]
inverse to the obvious inclusion. Hence we also have $A^\eff\isom A^{\geq 0}$
as unital bialgebras.

On the level of coalgebras, this implies
\[ 
A=\colim A^{\geq n}=\colim A^\eff
\]
with $A^{\geq n}=A(D^{\geq n},T)$ isomorphic to $A^{\eff}$ as coalgebras.
$A^\eff$ also has a multiplication, but the $A^{\geq n}$ do not. However, they
carry a weak $A^\eff$-module structure analogous to Remark \ref{weak_product} corresponding to the map of graded  diagrams
\[ 
D^\eff\times D^{\geq n}\to \Ph(D^{\geq n}).
\]
We want to describe the transition maps of the direct limit. From the point
of view of $D^\eff\to D^\eff$ it is given by $f\mapsto f\times f_0$.

In order to describe $A^\eff\to A^\eff$ it suffices to
describe $\End(T|_F)\to \End(T|_{F'})$ where
$F,F'$ are finite subdiagrams of $D^\eff$ such that $f\times f_0\in O(F')$ for
all vertices $f\in O(F)$. It is induced by
\[ 
\End(T(f))\to \End(T(f\times f_0))\xrightarrow{\tau}\End(T(f))\otimes \End(T(f_0))\hspace{2ex} a\mapsto a\otimes \id.
\]
On the level of coalgebras this corresponds to the map
\[ 
A^\eff\to A^\eff, \hspace{2ex} x\mapsto x\chi.
\]
Note finally, that the direct limit $\colim A^\eff$ with transition maps
given by multiplication by $\chi$ agrees with the localization $A^\eff_\chi$.
\end{proof}

\section{Nori's Rigidity Criterion}\label{sectC}
Implicit in Nori's construction of motives is a rigidity criterion, which
we are now going to formulate and prove explicitly.

Let $R$ be a Dedekind domain or a field and $\Ch$ an $R$-linear tensor category.
Recall that $\Rmod$ is the category of finitely generated $R$-modules and
$\Rfree$ the category of finitely generated projective $R$-modules.

 We assume that
the tensor product is associative, commutative and unital. Let $\unit$ be the unit object. 
Let $T:\Ch\to \Rmod$ be a faithful tensor functor with values in $\Rmod$. In particular, $T(\unit)=R$.

We introduce an ad-hoc notion.
\begin{defn}Let $V$ be an object of $\Ch$. We say that $V$ {\em admits
a perfect duality} if there is morphism
\[ q:V\tensor V\to \unit\]
or
\[ \unit\to V\tensor V\]
such that $T(V)$ is projective and $T(q)$ (respectively its dual) is a non-degenerate bilinear form.
\end{defn} 

\begin{defn}Let $V$ be an object of $\Ch$. By $\langle V\rangle_\tensor$ we denote
the smallest full abelian unital tensor subcategory of $\Ch$ containing
$V$.
\end{defn}
We start with the simplest case of the criterion.
\begin{lemma}\label{single}Let $V$ be an object such that $\Ch=\langle V\rangle_\tensor$ and such
that $V$ admits a perfect duality. Then $\Ch$ is rigid.
\end{lemma}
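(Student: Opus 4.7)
The plan is to bootstrap from $V$: first show $V$ is its own dual in $\Ch$ (with evaluation $q$), then deduce rigidity for every $V^{\otimes n}$, and finally for every object of $\Ch=\langle V\rangle_\tensor$, which is a subquotient of such a tensor power.

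For the main step, assume $q:V\tensor V\to\unit$ (the other case being formally dual). I would take $q$ itself as evaluation and construct a coevaluation $\eta:\unit\to V\tensor V$ satisfying the two triangle identities. In $\Rmod$ the perfect pairing $T(q)$ induces an isomorphism $T(V)\xrightarrow{\sim}T(V)^{\vee}$ (since $T(V)$ is finitely generated projective), so the standard coevaluation $R\to T(V)^{\vee}\tensor T(V)$ yields a canonical element $\eta_0\in T(V)\tensor T(V)$. By faithfulness of $T$, it is enough to lift $\eta_0$ to a morphism $\eta\in\Hom_\Ch(\unit,V\tensor V)$: the triangle identities are equalities of morphisms in $\Ch$ whose $T$-images already hold in $\Rmod$ by construction, so they transport back by faithfulness.

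The key obstacle is precisely this lifting, since $T$ is only faithful and not full in general. I would address it via the $\End(T|_F)$-description of $\Ch$ (Propositions \ref{abelian_hull} and \ref{propB.7}): a morphism $\unit\to V\tensor V$ in $\Ch$ corresponds to an element of $T(V)\tensor T(V)$ commuting with the natural action of $\End(T|_F)$ for any finite subdiagram $F$ containing the relevant vertices. Because $T(q)$ is the $T$-image of a morphism in $\Ch$, it is $\End(T|_F)$-equivariant, and hence so is its canonical inverse $\eta_0$, which is built from $T(q)$ by purely formal operations in the rigid category $\Rfree$. Thus $\eta_0$ descends to the required $\eta\in\Hom_\Ch(\unit,V\tensor V)$.

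Once $V$ is self-dual, every tensor power $V^{\otimes n}$ inherits self-duality from $q^{\otimes n}$ and $\eta^{\otimes n}$, composed with the symmetry and associativity isomorphisms supplied by Proposition \ref{tensorcat}. For an arbitrary object $W=X/Y$ with $Y\subseteq X\subseteq V^{\otimes n}$, which exhausts $\Ch$ by Proposition \ref{criterion}, the self-pairing of $V^{\otimes n}$ identifies a dual of $W$ with the subquotient $Y^{\perp}/X^{\perp}$ of $V^{\otimes n}$, giving the required dual object in $\Ch$. This establishes rigidity of $\Ch$.
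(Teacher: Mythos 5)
Your overall strategy — lift a coevaluation $\eta_0\in T(V)\otimes T(V)$ to a morphism $\unit\to V\otimes V$ in $\Ch$, then bootstrap — differs from the paper's, which stays on the algebra side: it presents the bialgebra $A$ (for which $\Ch$ is the category of comodules) as a quotient of the coordinate ring of the isometry group $G(q)\subset\End(T(V))$, and then invokes Lemma~\ref{submonoid} to conclude that the closed submonoid $\Spec A\subset G(q)$ is a group, i.e.\ $A$ is a Hopf algebra, whence rigidity is immediate. Your route is plausible, but the key step is asserted rather than proved, and this is where the real content of the lemma lives.

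The gap is the clause ``and hence so is its canonical inverse $\eta_0$, which is built from $T(q)$ by purely formal operations in the rigid category $\Rfree$.'' The construction of $\eta_0$ from $T(q)$ passes through the $R$-linear dual $T(V)^\vee$ and inverts $q^\sharp\colon T(V)\to T(V)^\vee$; these operations live in $\Rfree$, not in the image of $\Ch$, and precisely because $\End(T|_F)$ (equivalently, the coaction of the bialgebra $A$) acts by endomorphisms and not a priori by automorphisms, equivariance does not formally transport across a dual or an inverse. What saves the claim is a genuine fact you have to prove: writing the coaction on $T(V)$ as a matrix $X$ over $A$ and the Gram matrix as $a=(T(q)(e_i,e_j))$, equivariance of $T(q)$ says $X^{T}aX=a$; since $a$ is invertible over $R$, $\det X$ is a unit in $A$, so $X$ is invertible, and then $Xa^{-1}X^{T}=a^{-1}$, which \emph{is} the equivariance of $\eta_0$. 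That observation — an endomorphism preserving a perfect form is automatically invertible — is exactly what the paper packages via the isometry group and Lemma~\ref{submonoid}; without it your argument does not close. Separately, the final bootstrap is also thin: even granting $V^{\otimes n}$ self-dual, it is not automatic in a not-yet-rigid tensor-abelian category that the orthogonal $X^\perp\subset V^{\otimes n}$ is a subobject in $\Ch$ (one must check the coaction preserves it), nor that a coevaluation for $X/Y$ lands in $(X/Y)\otimes(Y^\perp/X^\perp)$. The Hopf-algebra route avoids this entirely, since once $A$ is Hopf every comodule is dualizable.
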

\begin{proof} By standard Tannakian formalism, $\Ch$ is the category of
comodules for a bialgebra $A$, which is commutative and of finite type as an $R$-algebra. 
Indeed, the construction of $A$ as a coalgebra was explained in
Proposition \ref{propB.7}.
We want to show that $A$ is a Hopfalgebra, or
equivalently, that the algebraic monoid $M=\Spec A$ is an algebraic group.

By Lemma \ref{submonoid} it suffices to show 
that there is a closed immersion $M\to G$ of monoids into an
algebraic group $G$. We are going to construct this group or rather its
ring of regular functions. We have 
\[ A=\lim A_n\]
with $A_n$ the Tannakian dual of $\Ch_n=\langle\unit,V,V^{\tensor 2},\dots,V^{\tensor n}\rangle$,
the smallest full abelian subcategory containing $\unit,V,\dots,V^{\tensor n}$.
By construction there is a surjective map
\[ \bigoplus_{i=0}^n\End_R((T(V)^{\tensor i})^\vee\to A_n\]
or dually an injective map
\[ A_n^\vee\to \bigoplus_{i=0}^n\End_R(T(V)^{\tensor i})\]
where $A_n^\vee$ consists of those endomorphisms compatible with all
morphisms in $\Ch_n$. In the limit there is a surjection of bialgebras
\[ \bigoplus_{i=0}^\infty\End_R((T(V)^{\tensor i})^\vee)\to A\]
and the kernel is generated by the relation defined by compatibility
with morphisms in $\Ch$. 
One such relation is the commutativity constraint, hence the map factors via the symmetric algebra
\[ S^*( \End(T(V)^\vee)\to A\ .\]
Note that $S^*(\End(T(V)^\vee)$ is canonically the ring of regular functions
on the algebraic monoid $\End(T(V))$. Another morphism in $\Ch$ is the
pairing $q:V\tensor V\to\unit$. We want to work out the explicit equation
induced by $q$.

We choose a basis $e_1,\dots,e_r$ of $T(V)$. Let 
\[ a_{i,j}=T(q)(e_i,e_j)\in R\]
By assumption the matrix is invertible.
 Let $X_{st}$ be the matrix coefficients on
$\End(T(V))$ corresponding to  the basis $e_i$. Compatibility with $q$ gives
for every pair $(i,j)$ the equation
\begin{align*}
 a_{ij}&=q(e_i,e_j)\\
               &=q((X_{rs})e_i,(X_{r's'})e_j)\\
                &=q\left(\sum_{r}X_{ri}e_r,\sum_{r'} X_{r'j}e_{r'}\right)\\
                &=\sum_{r,r'}X_{ri}X_{r'j}q(e_{r},e_{r'})\\
                &=\sum_{r,r'} X_{ri}X_{r'j}a_{rr'}
\end{align*}
Note that the latter is the $(i,j)$-term in the product of matrices
\[ (X_{ir})^t(a_{rr'})(X_{r'j}) \ .\]
Let $(b_{ij})=(a_{ij})^{-1}$. 
With 
\[ (Y_{ij})=(b_{ij}) (X_{i'r})^t(a_{rr'})\]
we have the coordinates of the inverse matrix.
In other words, our set of equations defines the isometry group $G(q)\subset \End(T(V))$. We now have expressed
$A$ as quotient of the ring of regular functions of $G(q)$.

The argument works in the same way, if we are given
\[ q:\unit\to V\tensor V\]
instead.
\end{proof}

\begin{prop}[Nori]\label{rigid}Let $\Ch$ and $T:\Ch\to \Rmod$ be as defined at the beginning of the section. Let $\{V_i|i\in I\}$
be a set of objects of $\Ch$ with the properties:
\begin{enumerate}
\item It generates $\Ch$ as an abelian tensor category, i.e., the smallest
full abelian tensor subcategory of $\Ch$ containing all $V_i$ is equal to $\Ch$.
\item For every $V_i$ there is an object $W_i$ and  a morphism
 \[ q_i:V_i\tensor W_i\to \unit\]
such
that $T(q_i):T(V_i)\tensor T(W_i)\to T(\unit)=R$ 
 is a perfect pairing of free
$R$-modules.
\end{enumerate}
Then $\Ch$ is rigid, i.e., for every object $V$ there is a dual object $V^\vee$ such that
\[\Hom(V\otimes A,B)=\Hom(A,V^\vee\otimes B)\ ,\hspace{1cm}\Hom(V^\vee\tensor A,B)=\Hom(A,V\tensor B)\ .\]
\end{prop}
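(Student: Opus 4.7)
The plan is to reduce to the single-generator case already treated in Lemma~\ref{single}. This requires two steps: first, replacing each pair $(V_i, W_i, q_i)$ by a single object that admits a perfect duality in the sense of that lemma; second, exhausting $\Ch$ by tensor subcategories generated by finitely many of these new objects, each of which is then rigid by Lemma~\ref{single}.

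For the first step, I would set $U_i := V_i \oplus W_i$ and define $\tilde q_i : U_i \otimes U_i \to \unit$ as the sum
\[
\tilde q_i \;=\; q_i \circ \pi_{V_i \otimes W_i} \;+\; q_i \circ c_{W_i, V_i} \circ \pi_{W_i \otimes V_i},
\]
where the $\pi$ are the projections from the natural four-fold decomposition of $U_i \otimes U_i$ and $c$ denotes the braiding. In a basis of $T(U_i)$ adapted to the direct sum, the Gram matrix of $T(\tilde q_i)$ is block antidiagonal with blocks $A_i := T(q_i)$ and $A_i^{\mathrm t}$, hence has determinant $\pm(\det A_i)^2 \ne 0$. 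Since $T(V_i)$ and $T(W_i)$ are free, $T(U_i)$ is projective, so $U_i$ indeed admits a perfect duality.

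For the second step, I would take, for each finite $S \subset I$, the object $U_S := \bigoplus_{i \in S} U_i$ with pairing $\tilde q_S := \bigoplus_{i \in S} \tilde q_i$ (block diagonal, still perfect), and set $\Ch_S := \langle U_S \rangle_\tensor$. Because each $U_i$ is a direct summand of $U_S$, this coincides with the full abelian tensor subcategory generated by $\{V_i, W_i : i \in S\}$. Applying Lemma~\ref{single} inside $\Ch_S$ with the restriction $T|_{\Ch_S}$ as fiber functor shows $\Ch_S$ is rigid. Hypothesis~(1) ensures every object of $\Ch$ is built from finitely many of the $V_i$ by finitely many tensor and abelian operations, hence lies in some $\Ch_S$; by fullness of the inclusion $\Ch_S \subset \Ch$, its dual in $\Ch_S$ serves as a dual in $\Ch$.

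The main obstacle will be the explicit construction of $\tilde q_i$ and verifying that it is really perfect: one needs $V_i$ and $W_i$ to have equal rank (forced by perfectness of $q_i$), the block-antidiagonal determinant calculation, and the observation that all ingredients (projections, braiding, $q_i$ itself) are morphisms already present in any $R$-linear symmetric monoidal category. Everything else is bookkeeping around passing to finitely generated subcategories, applying Lemma~\ref{single}, and taking the evident colimit.
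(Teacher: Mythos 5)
Your proposal is correct and follows essentially the same route as the paper: symmetrize each pair $(V_i,W_i,q_i)$ to a self-dual object $V_i\oplus W_i$, form orthogonal direct sums over finite index sets, apply Lemma~\ref{single}, and exhaust $\Ch$ by the resulting tensor subcategories. The only cosmetic difference is that the paper closes the argument by passing to the colimit of the associated bialgebras $A_J$ (each now a Hopf algebra), whereas you observe directly that a dual constructed in a full abelian tensor subcategory $\Ch_S$ remains a dual in $\Ch$; these are interchangeable formulations of the same final step.
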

This means that the Tannakian dual of $\Ch$ is not only a monoid but a group.
\begin{rem}The Proposition also holds with the dual assumption, existence of
morphisms
\[ q_i:\unit\to V_i\tensor W_i\]
such that
$T(q_i)^\vee:T(V)^\vee\tensor T(W_i)^\vee\to R$
 is a perfect pairing.
\end{rem}

\begin{proof}
Consider $V'_i=V_i\oplus W_i$. The pairing $q_i$ extends to a symmetric map
$q'_i$ on $V'_i\otimes V'_i$ such that $T(q'_i)$ is non-degenerate. We now replace
$V_i$ by $V'_i$.
Without loss of generality, we can assume
$V_i=W_i$.

For any finite subset $J\subset I$ let $V_J=\bigoplus_{j\in J}V_j$. Let
$q_J$ be the orthogonal sum of the $q_j$ for $j\in J$. It is again a symmetric perfect
pairing. 

For every object $V$ of $\Ch$ we write $\langle V\rangle_\tensor$ for the smallest
full abelian tensor subcategory of $\Ch$ containing $V$. By assumption we
have 
\[ \Ch=\bigcup_J \langle V_J\rangle_\tensor\]
We apply the standard Tannakian machinery. It attaches to every $\langle V_J\rangle_\tensor$
an $R$-bialgebra $A_J$ such that $\langle V_J\rangle_\tensor$ is equivalent to
the category of $A_J$-comodules. If we put
\[ A=\lim A_J\]
then $\Ch$ will be equivalent to the category of $A$-comodules. It suffices
to show that $A_J$ is a Hopf-algebra. This is the case by Lemma \ref{single}.
\end{proof}

Finally, the missing lemma on monoids.

\begin{lemma}\label{submonoid}
Let $R$ be noetherian ring, $G$ be an algebraic group scheme of finite type over $R$ and $M\subset G$ a closed immersion
of a submonoid with $1\in M(R)$. Then $M$ is an algebraic group scheme over $R$.
\end{lemma}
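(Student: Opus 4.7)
My plan is to show that the inversion morphism $i: G \to G$ sends $M$ into $M$ (scheme-theoretically), which, combined with the monoid structure, upgrades $M$ to a group scheme. By the Yoneda principle, the inclusion $i(M) \subseteq M$ is equivalent to the statement that the tautological $M$-point $\mu = \mathrm{id}_M \in M(M) \subseteq G(M)$ has its inverse $\mu^{-1} \in G(M)$ already lying in $M(M)$. This reduces the problem to a single, concrete statement about the universal base change $G_M = G \times_R M$.

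The key tool is left translation. For any $R$-scheme $S$ and any $g \in G(S)$, left translation by $g$ is an $S$-automorphism $L_g: G_S \to G_S$. Applied to our universal point $\mu$, we get an $M$-automorphism $L_\mu$ of $G_M$. Because $M$ is a submonoid, i.e., the multiplication $G \times_R G \to G$ sends $M \times_R M$ into $M$, the restriction of $L_\mu$ to the closed subscheme $M_M := M \times_R M \subseteq G_M$ factors through $M_M$. Since $L_\mu$ is an automorphism of $G_M$, this restriction $L_\mu|_{M_M}: M_M \to M_M$ is a closed immersion.

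Now form the descending chain of closed subschemes of $G_M$
\[
M_M \;\supseteq\; L_\mu(M_M) \;\supseteq\; L_{\mu^2}(M_M) \;\supseteq\; \cdots
\]
Since $R$ is noetherian and both $G$ and $M$ are of finite type over $R$, the fiber product $G_M$ is a noetherian scheme, so this chain stabilizes: $L_{\mu^n}(M_M) = L_{\mu^{n+1}}(M_M)$ for some $n$. Applying the automorphism $L_{\mu^{-n}}$ of $G_M$ cancels the first $n$ factors and yields $M_M = L_\mu(M_M)$. Thus the closed immersion $L_\mu: M_M \to M_M$ is surjective onto the target as a closed subscheme, hence an isomorphism. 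Its inverse is the restriction of $L_{\mu^{-1}}$, so $L_{\mu^{-1}}$ also preserves $M_M$. Applying it to the identity section $M \to M_M$ (which exists because $1 \in M(R)$) produces $\mu^{-1} \in M(M)$, as required.

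The main subtlety I expect is keeping the scheme-theoretic bookkeeping straight: verifying that the inclusion $L_\mu(M_M) \subseteq M_M$ really holds at the level of closed subschemes (not merely of underlying sets) uses exactly the hypothesis that $M \cdot M \subseteq M$ as a morphism of schemes, and the final identification $L_\mu^{-1} = L_{\mu^{-1}}$ on $M_M$ requires that both sides are computed inside the ambient automorphism group of $G_M$. Once those identifications are made, the descending chain condition does all the work.
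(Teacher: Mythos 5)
Your argument is correct and is essentially the same as the paper's: both rest on the observation that left translation by an $M$-point is an automorphism of $G$ preserving $M$ as a closed subscheme, and on noetherian stabilization of the resulting descending chain of translates of $M$. The only packaging difference is that you run the argument once, universally, with the tautological point $\mu=\mathrm{id}_M\in M(M)$ over the noetherian base $G_M$, whereas the paper argues ``pointwise'' for each finitely generated $R$-algebra $S$; your version sidesteps the slight awkwardness of restricting to finitely generated $S$ (needed there to keep $G_S$ noetherian) and directly produces the inversion morphism on $M$.
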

\begin{proof}This seems to be well-known. It is appears as an exercise in
\cite{Renner} 3.5.1 2.
We give the argument: 

Let $S$ be any finitely generated $R$-algebra. We have to show that the functor $S \mapsto M(S)$ is a group. 
We take base change of the situation to $S$. Hence without loss of generality, it suffices to consider $R=S$. 
If $g \in G(R)$, we denote the isomorphism $G \to G$ induced by left multiplication with $g$ also by $g: G \to G$.
Take any $g \in G(R)$ such that $gM \subset M$ (for example $g \in M(R)$). Then one has 
\[
M \supseteq gM \supseteq g^2M \supseteq \cdots  
\]
As $G$ is Noetherian, this sequence stabilizes, say at $s \in \mathbb N$:
\[
g^sM=g^{s+1}M
\]
as closed subschemes of $G$. Since every $g^s$ is an isomorphism, we obtain that
\[
M=g^{-s} g^sM= g^{-s} g^{s+1}M = gM
\]
as closed subschemes of $G$. So for every $g \in M(R)$ we showed that $gM=M$. Since $1 \in M(R)$, 
this implies that $M(R)$ is a subgroup.
\end{proof}

\section{Yoga of good pairs}\label{yoga}

We recall the definition of good pairs from the main text.

Let $R$ be noetherian ring, $k$ a subfield of $\C$. A variety is
a separated reduced scheme of finite type over $k$. We denote by
$X(\C)$ the set of complex points equipped with the analytic topology.

We denote by $\ZVar$ the additive
category whose objects are varieties over $k$ and whose morphisms
on connected varieties are formal $\Z$-linear combinations of morphisms between varieties.
We denote $\ZAff$ the full subcategory whose objects are affine varieties.

By base change to $\C$ we
can consider the corresponding analytic space and its singular cohomology.
\begin{defn}\label{verygood}
\begin{enumerate}
\item A triple $(X,Y,i)$ of a variety $X$ and a closed subvariety $Y$ and
an integer $i$ is called
{\em good pair} if singular cohomology satisfies
\[
H^j(X(\C),Y(\C);R)=0, \text{ unless } j=i.  
\]
and $H^i(X(\C),Y(\C);R)$ is free.
\item The diagram $D^\eff$ of good pairs has as vertices good pairs.
There are two types of edges  between effective good pairs: first the edges induced by morphisms $f^*:(X',Y',i) \to (X,Y,i)$ 
of triples for $f: X \to X'$ and $f(Y) \subset Y'$. The second type of edges 
$\partial:(Y,Z,i) \to (X,Y,i+1)$ arises for every chain $X \supset Y \supset Z$ of closed $k$-subvarieties of $X$ (coboundary). 
\item A good pair is called {\em very good} if $X$ is affine and
$X\ohne Y$ smooth and either $X$ of dimension $i$ and $Y$ of dimension $i-1$ or $X=Y$ of dimension less than $i$.
\item The diagram $\tilde{D}^\eff$ of very good pairs has
as vertices the very good pairs and edges as in $D^\eff$. 
\end{enumerate}
\end{defn}

\begin{lemma}[Basic Lemma of Nori]\label{basiclemma} 
Let $X$ be an affine scheme of finite type over $k \subset \C$ of dimension $n$ and $Z \subset X$ be a closed subscheme of dimension $\le n-1$. 
Then there is a closed subscheme $Y \supset Z$ such that 
$(X,Y,n)$ is a good pair.
\begin{itemize}
\item $\dim(Y) \le n-1$.
\item $H^i(X(\C),Y(\C);R)=0$ for $i \neq n$.
\item $H^n(X(\C),Y(\C);R)$ is a free $R$-module.
\end{itemize}
Moreover, $X\ohne Y$ can be chosen smooth.
\end{lemma}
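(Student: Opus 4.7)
The plan is to proceed by induction on $n=\dim X$. The base case $n=0$ is immediate: $X$ is a finite disjoint union of reduced points, $Z$ is forced to be empty by the dimension bound, and $Y=\emptyset$ satisfies all conditions.

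For the inductive step, I would first embed $X\hookrightarrow\A^N_k$ and use a sufficiently generic linear projection to produce a finite surjective morphism $\pi\colon X\to\A^n$ (Noether normalization). Let $B\subset\A^n$ be the union of $\pi(Z)$, $\pi(X^{\mathrm{sing}})$ and the branch locus of $\pi$; then $\dim B\le n-1$. Next I would choose a hypersurface $H=V(f)\subset\A^n$ containing $B$, taken generic enough that $\pi^{-1}(H)$ is a genuine divisor in $X$. Define
\[ Y:=\pi^{-1}(H)\cup Z. \]
By finiteness of $\pi$, $\dim Y\le n-1$; by construction $U:=X\ohne Y$ is affine (complement of a divisor in an affine scheme) and the restriction $\pi|_U\colon U\to V:=\A^n\ohne H$ is finite étale, so $U$ is smooth. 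Thus the geometric conditions (containment of $Z$, dimension, smooth affine complement) hold by construction.

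For the cohomology, the key input is Artin's vanishing theorem: for any constructible sheaf $\mathcal{G}$ on an affine variety $W$ of dimension $d$, $H^i(W(\C),\mathcal{G})=0$ for $i>d$. Applying this to $X$ (giving vanishing above degree $n$) and to $Y$ (giving vanishing above degree $n-1$), the long exact sequence of the pair collapses to yield $H^i(X(\C),Y(\C);R)=0$ for $i>n$. For the vanishing in degrees $<n$, one uses that $(\pi|_U)_* R$ is a locally constant sheaf on $V$ (since $\pi|_U$ is finite étale), so by Leray $H^i(U;R)=H^i(V,(\pi|_U)_*R)$; combining this with Verdier duality between $H^i(X,Y;R)$ and compactly supported cohomology on $U$, affine Lefschetz (Artin vanishing for compactly supported cohomology of smooth affine varieties) gives the desired vanishing below $n$. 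Freeness of $H^n(X(\C),Y(\C);R)$ then follows from universal coefficients once vanishing in adjacent degrees is established (or by choosing $R$-coefficients and noting $H^{n+1}$ with torsion coefficients vanishes as well).

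The main obstacle is the simultaneous geometric engineering: one must arrange that the projection $\pi$ is finite, that $H$ contains $\pi(Z)\cup\pi(X^{\mathrm{sing}})\cup$ branch locus, that $X\ohne Y$ is smooth, and that the Leray/duality step has the correct cohomological amplitude — all at once. This requires a careful Bertini-type general-position argument to choose $\pi$ and $H$ coherently, which constitutes the essentially geometric heart of the lemma; once the correct $(X,Y)$ is in hand, the cohomological vanishing is a formal consequence of Artin vanishing and the Leray spectral sequence for the finite étale map $\pi|_U$.
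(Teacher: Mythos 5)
The paper does not prove the Basic Lemma; it states it and refers the reader to Beilinson \cite[Lemma 3.3]{beilinson} (and to Vilonen's thesis), so there is no in-paper argument for you to match. Judged on its own terms, your geometric setup is the standard one: Noether normalization $\pi\colon X\to\A^n$, a hypersurface $H$ containing $\pi(Z)$, $\pi(X^{\mathrm{sing}})$ and the branch locus, and $Y=\pi^{-1}(H)$, so that $U=X\ohne Y$ is smooth affine and $\pi|_U$ is finite \'etale. Your argument for vanishing in degrees $>n$ (Artin vanishing applied to $X$ and to the affine $Y$ together with the long exact sequence of the pair) is correct. The ``induction on $n$'' you announce, however, is vestigial: the inductive hypothesis is never invoked.

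The step for degrees $<n$ has a genuine gap. You invoke a ``Verdier duality between $H^i(X,Y;R)$ and compactly supported cohomology on $U$''. No such identification exists in general: with $j\colon U\hookrightarrow X$ one has $H^i(X,Y)=H^i(X,j_!R_U)$ and $H^i_c(U)=H^i_c(X,j_!R_U)$, and for noncompact $X$ (any affine variety of positive dimension) cohomology and compactly supported cohomology of a sheaf on $X$ differ. More decisively, the conclusion you want to draw — that $H^i(X,Y)=0$ for $i<n$ is a formal consequence once $(X,Y)$ is of the shape you construct — is simply false. Take $X=\A^3$, $\pi=\id$, $Z=\emptyset$, and the irreducible hypersurface $H=V(xy-1)\subset\A^3$; then $Y=H\cong\C^*\times\C$, and $U=\A^3\ohne Y$ is smooth affine of dimension $3$, meeting every condition you impose. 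Indeed $H^i_c(U;\Z)=0$ for $i<3$, consistent with affine Lefschetz, yet $H^2(X,Y;\Z)\cong H^1(Y;\Z)\cong\Z\neq 0$ because $\A^3$ is contractible — so $(X,Y,3)$ is not a good pair. The vanishing below degree $n$ is precisely where the lemma has content: it requires a genuine genericity of $H$ and a different mechanism to exploit it (Beilinson's perverse-sheaf argument, Nori's inductive argument with constructible sheaves on $\A^n$, or a Lefschetz-pencil argument after a careful projective compactification), none of which is supplied by Artin vanishing and Leray for $\pi|_U$ alone.
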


A similar result holds in arbitrary characteristic by work of Beilinson \cite[Lemma 3.3]{beilinson} and 
Kari Vilonen apparently used similar methods in his Master thesis \cite{vilonen}.

The aim of this appendix is to establish the following result.

\begin{prop}\label{complexrepr} Let $\Ah$ be an $R$-linear abelian category with a faithful 
forgetful functor to $\Rmod$. Let $T: \tilde{D}^\eff\to \Ah$ be a representation. 
Then there is a natural contravariant triangulated functor
\[ 
R:C_b(\ZVar)\to D^b(\Ah)
\]
on the category of bounded homological complexes in $\ZVar$
such that for every good pair $(X,Y,i)$ we have 
\[
H^j( R(\cone (Y\to X))=\begin{cases} 0&j\neq i\\
                               T(X,Y,i)&j=i
                               \end{cases}
\] 
Moreover, the image of $R(X)$ in $D^b(\Rmod)$ computes
singular cohomology of $X(\C)$.
\end{prop}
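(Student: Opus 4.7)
The plan is to construct $R$ in stages: first on affine varieties via iterated applications of the Basic Lemma \ref{basiclemma}, then on general varieties via \v{C}ech complexes for affine open covers, and then extend to $\ZVar$ by $\Z$-linearity and to $C_b(\ZVar)$ by totalization. Throughout I exploit that the forgetful functor $\Ah\to\Rmod$ is faithful, so identities such as $d^{2}=0$ or the commutativity of naturality squares may be checked after forgetting to $\Rmod$ and then lifted to $\Ah$.

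\emph{Affine case.} For affine $X$ of dimension $n$, iterated application of Lemma \ref{basiclemma} yields a filtration
\[
\emptyset = X_{-1}\subset X_0 \subset X_1 \subset \cdots \subset X_n = X
\]
with $\dim X_j\le j$, $X_j\ohne X_{j-1}$ smooth, and each $(X_j,X_{j-1},j)$ a very good pair. I define $R(X)$ as the cohomological complex in $\Ah$ with $R(X)^j = T(X_j,X_{j-1},j)$ and differential $d^j$ induced by the coboundary edge $\partial\colon(X_j,X_{j-1},j)\to(X_{j+1},X_j,j+1)$ of $\tilde D^\eff$ attached to the chain $X_{j+1}\supset X_j\supset X_{j-1}$. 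That $d^{j+1}d^j=0$ is the classical cellular computation in $\Rmod$, transferred to $\Ah$ via faithfulness.

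\emph{Functoriality and independence of filtration.} Two Nori filtrations $X_\bullet,X'_\bullet$ of $X$ admit a common refinement obtained by re-applying the Basic Lemma inside $X_{j+1}\cup X'_{j+1}$ to the subscheme $X_j\cup X'_j$; the resulting maps of complexes are quasi-isomorphisms, so $R(X)$ is canonical in $D^b(\Ah)$. For a morphism $f\colon X\to X'$ of affine varieties, I arrange $f(X_j)\subset X'_j$ by enlarging $X_j$ to $X_j\cup f^{-1}(X'_j)$ (still of dimension $\le j$) and re-applying the Basic Lemma. This yields a contravariant map $R(f)\colon R(X')\to R(X)$, independent of all choices and compatible with composition in $D^b(\Ah)$. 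The cleanest technical framing is to introduce an auxiliary category of filtered affine varieties on which $R$ is strictly functorial, and show that $\ZAff\to D^b(\Ah)$ factors through its localization at quasi-isomorphisms.

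\emph{Extension to all varieties and complexes.} For general $X$, choose a finite affine open cover $\{U_\alpha\}$ and form the \v{C}ech simplicial object $U_\bullet$ in $\ZAff$, with $U_p=\coprod_{|\alpha|=p+1} U_{\alpha_0}\cap\cdots\cap U_{\alpha_p}$ (affine by separatedness). Set $R(X)=\tot\bigl(R(U_\bullet)\bigr)$; independence of the cover follows by the usual refinement argument. The extension to $\ZVar$ is by $\Z$-linearity, and to $C_b(\ZVar)$ by totalization of the resulting bicomplex in $\Ah$. Contravariant triangulatedness is built in by declaring $R(\cone(f))=\cone(R(f))[-1]$.

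\emph{Verification for good pairs and comparison with singular cohomology.} For a good pair $(X,Y,i)$, pick compatible Nori filtrations $Y_\bullet\subset X_\bullet$. Then $R(\cone(Y\to X))$ is the mapping cone of $R(X)\to R(Y)$, whose image under $\Ah\to\Rmod$ computes $H^*(X(\C),Y(\C);R)$ and is therefore concentrated in degree $i$; the surviving cohomology is naturally identified with $T(X,Y,i)$ via the universal very good subpair $(X_i,X_{i-1}\cup Y_i,i)$ cut out by the filtration and the naturality of the coboundary edges. The final claim that $R(X)$ computes singular cohomology is the degeneration of the spectral sequence of the Nori filtration at $E_1$, which is classical.

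The principal obstacle is the functoriality step: showing that $R$ descends to a well-defined functor into $D^b(\Ah)$, independent of the many choices (filtrations, covers, lifts of morphisms) and compatible with compositions and the triangulated structure. This is standard but technically heavy, and requires either an acyclic-models argument or the filtered-variety category sketched above, with careful bookkeeping throughout.
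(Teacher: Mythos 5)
Your overall architecture (iterate the Basic Lemma to build ``very good'' filtrations on affines, pass to non-affines via \v{C}ech, extend to complexes by totalization) is the paper's strategy, and you correctly identify functoriality/independence-of-choices as the crux. But there are two concrete problems.

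First, there is an error in the direction of your functoriality argument. You propose to handle a morphism $f\colon X\to X'$ by ``enlarging $X_j$ to $X_j\cup f^{-1}(X'_j)$ (still of dimension $\le j$)''. This is false in general: $f^{-1}(X'_j)$ can have dimension much larger than $j$ (take $f\colon\A^1\to\Spec\Q$ with $X'_0$ the point; then $f^{-1}(X'_0)=\A^1$). The refinement must go the other way: given a very good filtration $F_\bullet X$ of the source, one constructs a compatible filtration of the target using that $f(F_j X)$ has dimension $\le j$, re-applying the Basic Lemma on $X'$ downward from the top. That is exactly what the paper does in its corollary on functoriality of very good filtrations. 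So your Nori-filtration step needs to produce, for each $f$, a map \emph{into} the inductive system of filtrations of $X'$, not a pullback.

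Second, you flag the functoriality of the \v{C}ech layer as ``standard but technically heavy'' and gesture at acyclic models or an auxiliary filtered category, but you do not supply the device that actually makes it work. The paper's key trick, borrowed from Friedlander's \'etale homotopy theory, is to use \emph{rigidified} affine covers: each point $x$ is assigned a distinguished index $i_x$ with $x\in U_{i_x}$, and maps of rigidified covers are required to respect these choices. This rigidification makes the refinement map between two covers \emph{unique} when it exists, so the projective system of covers of a fixed (complex of) variety(ies) becomes strictly functorial, and one never has to reconcile incompatible refinement choices. Moreover the paper builds the rigidified cover and the very good filtration \emph{simultaneously for the whole complex} $X_*\in C_b(\ZVar)$ (filtrations on all the \v{C}ech intersections, compatible with all differentials), rather than variety by variety; without this global compatibility the \v{C}ech double complex does not literally carry a very good filtration, and the totalization you propose does not have a well-defined image under $T$. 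Your sketch would need both of these corrections before it closes the gap you yourself point to at the end.
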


We are mostly interested in two explicit examples of complexes.
\begin{defn}\label{relative_and_support}
Consider the situation of Proposition \ref{complexrepr}.
Let $Y\subset X$ be a closed subvariety with open complement $U$, $i\in\Z$. Then we put
\begin{gather*} R(X,Y)=R(\cone(Y\to X)),\hspace{2ex}R_Y(X)=R(\cone(U\to X))\in D^b(\Ah)\\
T(X,Y,i)=H^i(R(X,Y)),\hspace{2ex} T_Y(X,i)=H^i(R_Y(X))\in \Ah
\end{gather*}
$T(X,Y,i)$ is called {\em relative cohomology}. $T_Y(X,i)$ is called {\em cohomology with support}. 
\end{defn}

The strategy of the proof combines a variation of constructions in \cite{HuberReal}
and a key idea of Nori.

The first step is to replace arbitrary complexes by affine ones.
The idea for the following construction is from the \'etale case,
see \cite{Friedlander} Definition 4.2.
\begin{defn} Let $X$ a variety. A {\em rigidified} affine cover is
a finite open affine covering $\{U_i\}_{i\in I}$ together with a choice of an index $i_x$ 
for every closed point $x\in X$ such that $x \in U_{i_x}$. We also assume that in the covering 
every index $i \in I$ occurs as $i_x$ for some $x \in X$. 

Let $f:X\to Y$ be a morphism of varieties, $\{ U_i\}_{i\in I}$ a
rigidified open cover of $X$ and $\{V_j\}_{j\in J}$ a rigidified
open cover of $Y$. 
A {\em morphism} of rigidified covers (over $f$) 
\[ \phi:\{ U_i\}_{i\in I}\to \{V_j\}_{j\in J}\]
is a map of sets $\phi:I\to J$ such that $f(U_i)\subset V_{\phi(i)}$ and
for all $x\in X$ we have $\phi(i_x)=j_{f(x)}$.
\end{defn}
\begin{rem} Under these conditions the rigidification makes $\phi$ unique if it exists.\end{rem}

\begin{lemma} The projective system of rigidified affine covers  is filtered
and strictly functorial, i.e.,
if $f:X\to Y$ is a morphism of varieties, pull-back defines
a map of projective systems.
\end{lemma}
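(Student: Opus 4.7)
The plan is to verify the two assertions separately. The single geometric input is that if $Y$ is separated, then for any two affine opens $U,V$ of a $Y$-scheme $X$ the fibered product $U\times_Y V$ is again affine; in particular intersections of affine opens in a variety are affine.

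For filteredness of the projective system of rigidified affine covers on a fixed variety $X$, I would proceed as follows. Given two such covers $\mathcal{U}=\{U_i\}_{i\in I}$ with rigidification $x\mapsto i_x$ and $\mathcal{V}=\{V_j\}_{j\in J}$ with $x\mapsto j_x$, form the common refinement indexed by the set $K\subseteq I\times J$ consisting of those pairs of the form $(i_x,j_x)$ for some closed point $x\in X$, with opens $W_{(i,j)}:=U_i\cap V_j$ (affine by separatedness) and rigidification $x\mapsto(i_x,j_x)$. The trimming to pairs of the form $(i_x,j_x)$ is exactly what ensures the rigidifying condition that every index occur. The coordinate projections $K\to I$ and $K\to J$ are then morphisms of rigidified covers to $\mathcal{U}$ and $\mathcal{V}$: the containments $W_{(i,j)}\subset U_i$, $W_{(i,j)}\subset V_j$ hold by definition, and the compatibilities with the rigidifications $\phi(i_x,j_x)=i_x$ and $\phi(i_x,j_x)=j_x$ are automatic.

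For strict functoriality along a morphism $f:X\to Y$, I would fix, once and for all, some choice of rigidified affine cover $\{U_k\}_{k\in K_0}$ of $X$ with rigidification $x\mapsto k_x$. Given a rigidified affine cover $\{V_j\}_{j\in J}$ of $Y$, define the pullback to be the cover of $X$ with index set $L\subseteq K_0\times J$ consisting of pairs $(k_x,j_{f(x)})$ for $x\in X$, with opens $U_k\cap f^{-1}(V_j)=U_k\times_Y V_j$ (affine by separatedness of $Y$) and rigidification $x\mapsto(k_x,j_{f(x)})$. The projection $L\to J$, $(k,j)\mapsto j$, is then a morphism of rigidified covers over $f$: the required condition $\phi(k_x,j_{f(x)})=j_{f(x)}$ is built in. To promote this to a map of projective systems, I would observe that refining the target cover $\mathcal{V}$ to some $\mathcal{V}'$ induces a refinement of the source side by applying the filteredness construction of the previous paragraph to the two pullbacks, so the assignment is compatible with the order.

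The only substantive issue is bookkeeping: one must consistently trim index sets to the subset of pairs actually of the form $(i_x,j_x)$ or $(k_x,j_{f(x)})$, so that the rigidifying surjectivity condition is preserved. This is what makes morphisms of rigidified covers unique when they exist, per the remark preceding the lemma, and hence what upgrades the constructions above from being merely well-defined up to choice to being genuinely functorial. Once this convention is adopted, both halves of the lemma reduce to the tautological identity $\phi(i_x)=j_{f(x)}$ dictated by the rigidifications.
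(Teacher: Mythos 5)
Your proof is correct and follows essentially the same approach as the paper: common refinement by intersections for filteredness, and pullback followed by intersection with a fixed affine cover for functoriality. You usefully spell out the point that preimages of affine opens need not be affine, which the paper's own terse proof (``preimages of rigidified covers are rigidified \emph{open} covers'') only gestures at, leaving the necessary intersection step implicit.
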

\begin{proof}Any two covers have their intersection as common refinement.
The rigidification extends in the obvious way. Preimages
of rigidified covers are rigidified open covers.
\end{proof}

\begin{defn}
Let $F=\sum a_if_i:X\to Y$ be a morphism in $\ZVar$. The {\em support} of $F$
is the set of $f_i$ with $a_i\neq 0$. 

Let $X_*$ be a homological complex of varieties, i.e., an object in $C_b(\ZVar)$. 
An {\em affine cover} of $X_*$ is a complex of rigidified affine covers, i.e., for
every $X_n$ the choice of a rigidified open cover $\tilde{U}_{X_n}$ and for 
every $g:X_n\to X_{n-1}$ in the support of the differential $X_n\to X_{n-1}$ in the complex $X_*$ 
a morphism of rigidified covers $\tilde{g}:\tilde{U}_{X_n}\to \tilde{U}_{X_{n-1}}$
over $g$. 

Let $F_*:X_*\to Y_*$ be a morphism in $C_b(\ZVar)$ and $\tilde{U}_{X_*}$, $\tilde{U}_{Y_*}$ affine covers of $X_*$ and $Y_*$.  A morphism of affine
covers over $F_*$ is a morphism of ridigied affine covers $f_n:\tilde{U}_{X_n}\to \tilde{U}_{Y_n}$ over every morphism in the support of $F_n$.
\end{defn}

\begin{lemma}Let $X_*\in C_b(\ZVar)$. Then the projective system of 
rigidified affine covers of $X_*$ is non-empty, filtered and functorial, i.e.
if $f_*:X_*\to Y_*$ is a morphism of complexes and $\tilde{U}_{X_*}$ an affine
cover of  $X_*$, then there is affine cover $\tilde{U}_{Y_*}$ and a morphism or complexes of rigidified affine covers. Any two choices are compatible in the
projective system of covers.
\end{lemma}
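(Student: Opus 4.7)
The plan is to establish the three assertions --- non-emptiness, filteredness of the projective system, and (strict) functoriality --- separately, each reducing to constructing compatible refinements one degree at a time and exploiting that $X_*$ is bounded and that differentials and maps of complexes in $C_b(\ZVar)$ have finite support. For non-emptiness I would induct on the degree: at the lowest nonzero degree $a$, pick any finite affine cover of $X_a$ and rigidify arbitrarily, dropping unused indices. Given $\tilde U_{X_m}$ for $m<n$ with opens $\{V_j\}_{j\in J_{n-1}}$ at level $n-1$, write $g_1,\dots,g_r:X_n\to X_{n-1}$ for the finite support of $d_n$ and start from any rigidified affine cover $\{U_i\}_{i\in I}$ of $X_n$. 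The refinement indexed by tuples $(i,j_1,\dots,j_r)\in I\times J_{n-1}^r$ with opens $U_i\cap g_1^{-1}(V_{j_1})\cap\dots\cap g_r^{-1}(V_{j_r})$ is affine (since the varieties are separated), still covers $X_n$, and carries the canonical rigidification $x\mapsto(i_x,j_{g_1(x)},\dots,j_{g_r(x)})$. Projection onto the $j_\ell$-slot is then a morphism of rigidified covers over $g_\ell$. Prune unused tuples to satisfy the surjectivity axiom and continue upward.

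For filteredness, given two rigidified affine covers $\tilde U_{X_*}$ and $\tilde U'_{X_*}$ of $X_*$, the common refinement degree by degree --- pairs $(i,j)\in I\times J$ indexing $U_i\cap U'_j$ with rigidification $x\mapsto(i_x,j_x)$ --- admits refinement morphisms to both covers via the two projections, and compatibility across differentials is immediate from the universal property of the index pullback. For functoriality along $f_*:X_*\to Y_*$, first pick any rigidified affine cover $\tilde U_{Y_*}$ of $Y_*$ by non-emptiness, then refine $\tilde U_{X_*}$ by the same tuple-index construction, this time also intersecting with the pullbacks $f_n^{-1}(V_j)$ of the opens of $\tilde U_{Y_*}$ for every $f_n$ in the support of each component of $f_*$. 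Projection onto the $Y$-side index gives a morphism of rigidified covers over $f_*$. Independence of the choice of $\tilde U_{Y_*}$ inside the projective system then follows from filteredness applied on the $Y$-side.

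The main technical obstacle will be bookkeeping: the rigidification condition $\phi(i_x)=j_{f(x)}$ forces every refinement step to carry a coherent and compatible choice of indices across all the finitely many morphisms in play simultaneously, and the axiom that every index occur as some $i_x$ forces pruning of tuple index sets down to the image of the rigidification map. One must verify that these pruned families still cover each $X_n$, which is automatic since $x$ always lies in the open indexed by $(i_x,\dots)$. Separatedness of the varieties --- guaranteeing that intersections of affine opens remain affine --- is the geometric input that keeps the construction within the category of rigidified affine covers at every step, and the finite support of $d_n$ and of the components of $f_*$ ensures that only finitely many pullbacks are forced into each tuple.
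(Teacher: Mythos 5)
Your proof is correct and follows the same degree-by-degree inductive strategy the paper uses (which it only sketches): start at the lowest nonzero degree, pick an arbitrary rigidified affine cover there, and then at each higher degree refine an arbitrary cover of $X_n$ by intersecting with the preimages of the already-chosen opens under the finitely many morphisms in the support of the differential, with the index rigidified by $x\mapsto(i_x,j_{g_1(x)},\dots,j_{g_r(x)})$ and unused tuples pruned. The invocation of separatedness (so that $U_i\cap g_\ell^{-1}(V_{j_\ell})$ remains affine) is exactly the geometric input needed and is handled correctly, and your treatment of filteredness via component-wise common refinement and of functoriality via refinement of $\tilde U_{X_*}$ matches how the lemma is actually applied later in the construction of $R$.
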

\begin{proof}
Let $n$ be minimal with $X_n\neq \emptyset$. Choose a rigidified cover
of $X_n$. The support of $X_{n+1}\to X_n$ has only finitely many elements.
Choose a rigidified cover of $X_{n+1}$ compatible with all of them. Continue
inductively.

 Similar constructions show the rest of the assertion.
\end{proof}
\begin{defn}\label{chech}Let $X$ be a variety and  $\tilde{U}_X=\{U_i\}_{i\in I}$ a rigidified affine cover of $X$. We put
\[ C_\star(\tilde{U}_X)\in  C_-(\ZAff),\]
the \v{C}ech complex associated to the cover, i.e., 
\[ C_{n}(\tilde{U}_X)=\coprod_{\underline{i}\in I_n}\bigcap_{i\in\underline{i}} U_{i},\]
where $I_n$ is the set of tuples $(i_0,\dots,i_n)$. The boundary maps
are the ones obtained by taking the alternating sum of the boundary maps
of the simplicial scheme.

If $X_*\in C_b(\ZVar)$ is a complex, $\tilde{U}_{X_*}$ a rigidified affine cover, let 
\[ C_\star(\tilde{U}_{X_*})\in C_{-,b}(\ZAff)\]
be the double complex $C_i(\tilde{U}_{X_j})$.
\end{defn}

Note that all components of $C_\star(\tilde{U}_{X_*})$ are affine. The projective system
of these complexes is filtered and functorial. 

In the second step, we replace every affine $X$ by a complex of very good pairs.
This follows the key idea of Nori as follows:
Using induction one gets from the Basic Lemma \ref{basiclemma}:

\begin{cor}\label{goodfiltration}
Every affine variety $X$ has a filtration 
\[
\emptyset=F_{-1}X \subset F_0X \subset \cdots \subset F_{n-1}X \subset F_nX=X, 
\]
such that $(F_jX,F_{j-1}X,j)$ is very good.\end{cor}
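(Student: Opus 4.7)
The plan is to argue by induction on $n=\dim X$, using the Basic Lemma \ref{basiclemma} as the engine. At each step the Basic Lemma is invoked to peel off the top-dimensional stratum, producing a closed subvariety that plays the role of the next term of the filtration, and we then recurse on it.

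For the base case $n=0$ the affine variety $X$ is a finite disjoint union of reduced closed points. We set $F_{-1}X=\emptyset$, $F_0X=X$, and observe that $X$ is affine, $X\ohne\emptyset=X$ is smooth, $\dim X=0$, $\dim\emptyset=-1$, while $H^0(X(\C);R)$ is free and higher cohomology vanishes, so $(X,\emptyset,0)$ is very good in the sense of Definition \ref{verygood}. For the inductive step, assume the statement for affine varieties of dimension $<n$. Choose a closed subscheme $Z\subset X$ of dimension exactly $n-1$ containing the singular locus $X_{\mathrm{sing}}$ (for instance, take $Z$ to be $X_{\mathrm{sing}}$ together with a hypersurface section on each component of $X$ of dimension $n$). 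Apply the Basic Lemma to $(X,Z)$ to obtain a closed subscheme $Y\supset Z$ with $\dim Y\le n-1$, with $X\ohne Y$ smooth, and with $(X,Y,n)$ a good pair. Since $\dim Y\ge\dim Z=n-1$, in fact $\dim Y=n-1$, so $(X,Y,n)$ is very good. Set $F_{n-1}X:=Y$.

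Now $F_{n-1}X$ is affine (being closed in the affine $X$) and of dimension $n-1$, so by the inductive hypothesis it admits a filtration $\emptyset=G_{-1}\subset G_0\subset\cdots\subset G_{n-1}=F_{n-1}X$ with each $(G_j,G_{j-1},j)$ very good. Putting $F_jX:=G_j$ for $-1\le j\le n-1$ and $F_nX:=X$ produces the required filtration.

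The only subtle point, rather than a genuine obstacle, is arranging that $\dim F_{j-1}X$ equals $j-1$ \emph{exactly} (not merely $\le j-1$), so that the first clause of the definition of ``very good'' applies rather than some intermediate case not covered by Definition \ref{verygood}. This is exactly what is ensured by enlarging $Z$ to have dimension $j-1$ before invoking the Basic Lemma at each stage, as described above; the Basic Lemma then forces $\dim Y=j-1$ by sandwiching.
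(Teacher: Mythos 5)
Your proof is correct and follows essentially the same route as the paper's: induction on $\dim X$, using the Basic Lemma at each stage with the auxiliary closed subset chosen to contain the singular locus and to have dimension exactly $n-1$ so that the resulting pair is very good (not merely good) and the recursion can continue. The paper's write-up is terser but identical in substance.
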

Filtrations of the above type are called {\em very good filtrations}.
\begin{proof}
Let $\dim X=n$. Put $F_nX=X$. Choose a subvariety of dimension $n-1$ which
contains all singular points of $X$. By the Basic Lemma there is a
subvariety $F_{n-1}X$ of dimension $n-1$ such that $(F_nX,F_{n-1}X,n)$ is
good. By construction $F_{n-1}X\ohne F_{n-1}X$ is smooth and hence the
pair is very good. We continue by induction.
\end{proof}

\begin{cor}Let $X$ be an affine variety. The inductive system of all very good
filtrations of $X$ is filtered and functorial.
\end{cor}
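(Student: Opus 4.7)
My plan is to verify three properties of the inductive system of very good filtrations of an affine variety $X$---non-emptiness, filteredness, and functoriality in $X$---by iterated application of the Basic Lemma \ref{basiclemma} from the top dimension downwards, exactly in the manner of the proof of Corollary \ref{goodfiltration}. Non-emptiness is Corollary \ref{goodfiltration} itself.

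For filteredness, I would show that any two very good filtrations $F_\ast X$ and $F'_\ast X$ of $X$ admit a common refinement $H_\ast X$ constructed by descending induction. Setting $n=\dim X$ and $H_nX=X$, and assuming $H_{j+1}X$ has been chosen with $F_{j+1}X\cup F'_{j+1}X\subset H_{j+1}X$, I form the closed subvariety
\[ Z_j=F_jX\cup F'_jX\cup\mathrm{Sing}(H_{j+1}X), \]
which has dimension at most $j$, and invoke the Basic Lemma inside the affine variety $H_{j+1}X$ to produce $H_jX\supset Z_j$ of dimension at most $j$ making $(H_{j+1}X,H_jX,j+1)$ a good pair with smooth complement, hence very good. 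Then $F_jX\subset H_jX$ and $F'_jX\subset H_jX$ for every $j$, so $H_\ast X$ dominates both filtrations.

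For functoriality in a morphism $f\colon X\to Y$ of affine varieties, I would show that, given very good filtrations $F_\ast X$ of $X$ and $G_\ast Y$ of $Y$, one can refine $G_\ast Y$ to a very good filtration $G'_\ast Y$ with $f(F_jX)\subset G'_jY$ for all $j$; passing to cofinal subsystems then defines the required map of inductive systems. The construction is again a descending induction on $Y$: at step $j$ I enlarge $G_jY$ by $\overline{f(F_jX)}$, which still has dimension at most $j$ since $\dim F_jX\le j$, and apply the Basic Lemma inside $G'_{j+1}Y$ to restore very-goodness. Compatibility with further refinements and with composition of morphisms then follows routinely from the filteredness proven above.

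The main obstacle is the bookkeeping in the inductive constructions: one must ensure that each invocation of the Basic Lemma modifies only the current stratum while preserving dimension bounds and leaving the higher pieces intact. Because the argument proceeds top-down and works exclusively inside the affine scheme $H_{j+1}X$ (respectively $G'_{j+1}Y$) at step $j$, the higher pieces are automatically preserved and the dimension bounds propagate inductively, so this bookkeeping works out without further difficulty.
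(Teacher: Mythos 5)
Your proof is correct and follows the paper's approach exactly: non-emptiness is Corollary \ref{goodfiltration}, and both filteredness and functoriality are established by descending induction via the Basic Lemma, enlarging the $j$-th stratum to absorb the given data together with the singular locus of the stratum above. You merely spell out the singular-locus bookkeeping more explicitly than the paper, which compresses it into ``we continue by induction'' with reference to the earlier proof.
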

\begin{proof}We follow Nori, \cite{N}. Let $F_*X$ and $F'_*X$ be two very good filtrations of $X$.
$F_ {n-1}X\cup F'_{n-1}X$ has dimension $n-1$. By the Basic Lemma
there is subvariety $G_{n-1}X\subset X$ of dimension
$n-1$ such that
$(X,G_{n-1}X,n)$ is a good pair. It is automatically very good.
We continue by induction.

Consider a morphism $f:X\to X'$. Let $F_*X$ be a very good filtration.
Then $f(F_{i}X)$ has dimension at most $i$. As in the proof of Corollary \ref{goodfiltration}, we construct a very
good filtration $F_*X'$ with additional property $f(F_iX)\subset F_iX'$.
\end{proof}

\begin{defn}Let $X$ be a variety, $\{U_i\}_{i\in I} $ a rigidified affine cover of $X$.
A {\em very good filtration} on $\tilde{U}_X$ is the choice of very good
filtrations for 
\[ \bigcap_{i\in J}U_i \]
for all $J\subset I$ compatible with all inclusions between these.

Let $f:X\to Y$ be a morphism of varieties, 
$\phi: \{U_i\}_{i\in I}\to\{V_j\}_{j\in J}$
a morphism of rigidified affine covers above $f$ . Fix very good filtrations
on both covers. $\phi$ is called {\em filtered}, if for all
$I'\subset I$ the induced map
\[ \bigcap_{i\in J}U_i\to \bigcap_{i\in I}V_{\phi(i)}\]
is compatible with the filtrations.

Let $X_*\in C_b(\ZVar)$ be bounded complex of varieties, $\tilde{U}_{X_*}$
an affine cover of $X_*$. A {\em very good filtration} on $\tilde{U}_{X_*}$
is a very good filtration on all $\tilde{U}_{X_n}$ compatible with all
morphisms in the support of the boundary maps. 
\end{defn}

Note that the \v{C}ech complex associated to a rigidified affine cover
with very good filtration
is also filtered in the sense that there is a very good
filtration on all $C_n(\tilde{U}_X)$ and all morphisms in the
support of the differential are compatible with the filtrations.

\begin{lemma}Let $X$ be a variety, $\tilde{U}_X$ a rigidified affine
cover. The inductive system of very good filtrations on
$\tilde{U}_X$ is non-empty, filtered and functorial.

The same statement also holds for a complex of varieties $X_*\in C_b(\ZVar)$.
\end{lemma}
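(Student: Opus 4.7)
The plan is to reduce all three claims---non-emptiness, filteredness, functoriality---for $\tilde U_X$, and their analogues for a bounded complex $X_\bullet$, to a single inductive scheme across the finite collection of intersections $U_J := \bigcap_{i \in J} U_i$, applying the Basic Lemma (Lemma~\ref{basiclemma}) as the only geometric input. The underlying observation is that a finite union of closed subvarieties of dimension $\leq i$ still has dimension $\leq i$, so the Basic Lemma may always be invoked to produce a very good filtration step of dimension $\leq i$ containing any prescribed such subvariety, exactly as in the proof of Corollary~\ref{goodfiltration}.

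For non-emptiness, I would process the $U_J$ in a linear extension of the reverse-inclusion order, so that smaller intersections come first and every $U_{J'}$ with $J' \supsetneq J$ is handled before $U_J$. Setting $F_{\dim U_J} U_J = U_J$, I would then build $F_{i-1} U_J \subseteq F_i U_J$ downward by applying the Basic Lemma to the affine variety $F_i U_J$ with prescribed closed subvariety
\[ Z_{i-1} \;:=\; \mathrm{Sing}(F_i U_J) \;\cup\; \bigcup_{J' \supsetneq J} \overline{F_{i-1} U_{J'}}, \]
the closures being taken inside $F_i U_J$; this union has dimension $\leq i-1$. The resulting pair $(F_i U_J, F_{i-1} U_J, i)$ is then very good, and by construction the inclusions $U_{J'} \hookrightarrow U_J$ are filtered morphisms. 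Filteredness is handled by the same downward induction applied to a common upper bound: given two very good filtrations $F, F'$ on $\tilde U_X$, I would additionally enlarge the prescribed subvariety $Z_{i-1}$ by the union $F_{i-1} U_J \cup F'_{i-1} U_J$ (still of dimension $\leq i-1$), obtaining a filtration $G$ which majorises both.

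Functoriality over a morphism of rigidified covers $\phi: \tilde U_X \to \tilde U_Y$ lying above $f: X \to Y$ is parallel: having fixed a filtration on $\tilde U_X$, I would construct one on $\tilde U_Y$ by the same scheme, enlarging the prescribed subvariety at each intersection $V_L$ of $\tilde U_Y$ by the closures of the images $f(F_{i-1} U_K)$ for each intersection $U_K$ of $\tilde U_X$ mapping to $V_L$; these images have dimension $\leq i-1$ by the same argument used in Corollary~\ref{goodfiltration}'s functoriality statement, so the Basic Lemma still applies. For a complex $X_\bullet \in C_b(\ZVar)$, I would process intersections across all components $X_n$ in an order compatible with both the component-wise reverse-inclusion order and the differentials, so that whenever a morphism in the support of a boundary restricts to a map $U \to V$ between intersections, $U$ is processed before $V$ and the closures of the images enter the constraint for $V$. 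The main technical obstacle is thus purely combinatorial---verifying that at every inductive step the collected constraints still form a closed subvariety of the correct dimension so that the Basic Lemma applies---and this is immediate from the dimension observation above together with the finiteness of the cover and the complex.
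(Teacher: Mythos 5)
Your proof is correct and matches the paper's approach, which is stated in a single terse sentence: choose the very good filtrations on the intersections $\bigcap_{i\in J}U_i$ recursively in decreasing order of $J$, compatible with the inclusions, and for complexes start with the highest term. Your double induction (outer loop over $J$ in a linear extension of reverse inclusion, inner downward loop on dimension via the Basic Lemma, with singular loci and closures of previously fixed filtration pieces and of images accumulated into the prescribed subvariety) is precisely the unpacking of that recursion, and the same pattern correctly handles filteredness, functoriality, and the complex case.
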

\begin{proof} Let $\tilde{U}_X=\{U_i\}_{i\in I}$ be the affine
cover.
We chose recursively very good filtration on
$\bigcap_{i\in J}U_i$ with decreasing order of $J$, compatible
with the inclusions.

We extend the construction inductively to complexes, starting with the highest 
term of the complex.
\end{proof}

\begin{defn}Let $X_*\in C_-(\ZAff)$. A {\em very good filtration} of
$X_*$ is given by a very good filtration $F_. X_n$ for all $n$ which
is compatible with all morphisms in the support of the differentials
of $X_*$.
\end{defn}

\begin{lemma}Let $X_*\in C_b(\ZVar)$ and $\tilde{U}_{X_*}$ an affine 
cover of $X_*$ with a very good filtration. Then the total complex
of $C_\star(\tilde{U}_{X_*})$ carries a very good filtration.
\end{lemma}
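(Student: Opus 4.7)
The plan is to define a very good filtration on $\tot C_\star(\tilde{U}_{X_*})$ degree by degree, using the disjoint-union structure already present, and then verify that the two kinds of morphisms contributing to the total differential are both filtered by construction.

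Concretely, in total degree $n$ the complex is the disjoint union
\[
\coprod_{i+j=n}\; \coprod_{\underline{k}\in I_i^{(j)}}\; \bigcap_{l\in\underline{k}} U_l^{(j)},
\]
where $I^{(j)}$ denotes the index set of the rigidified cover $\tilde{U}_{X_j}$. Each summand is an affine variety carrying, by hypothesis, a chosen very good filtration. I would define the $d$-th step of the filtration on the degree-$n$ piece to be the disjoint union of the $d$-th steps of these summand filtrations, extending each already-stabilized filtration by its top term so the indexing aligns. Finite disjoint unions of affine varieties are affine, smoothness of the complement $X\ohne Y$ is preserved, and the dimension conditions in Definition \ref{verygood} survive disjoint union (if one filtration has stabilized below level $j$ while another has not, the disjoint union still has dimension exactly $j$ and its predecessor dimension $j-1$). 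Since relative singular cohomology splits as a direct sum over connected components, the pair $(F_d,F_{d-1},d)$ remains very good at each step.

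For compatibility with the differential I would decompose the total differential into its horizontal \v{C}ech part and its vertical part inherited from $X_*$. The morphisms in the support of the horizontal part are the face inclusions $\bigcap_{l\in\underline{k}} U_l^{(j)}\hookrightarrow \bigcap_{l\in\underline{k}'} U_l^{(j)}$ for $\underline{k}'\subset\underline{k}$, and these are filtered by the very definition of a very good filtration on a rigidified affine cover (the filtrations on intersections are required to be compatible with all inclusions among subsets of the index set). The morphisms in the support of the vertical part sit above the morphisms in the support of the differential of $X_*$; by the definition of a very good filtration on an affine cover of a complex these are filtered too. Signs in the total differential do not alter which morphisms lie in the support, so filteredness is preserved.

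The only real obstacle is notational bookkeeping: one must check that the notion of ``very good filtration'' behaves well under finite disjoint unions and under $\Z$-linear combinations of filtered maps, and that the filtration indices are aligned across all summands sitting in a given total degree. No new geometric input (beyond the Basic Lemma already invoked in Corollary \ref{goodfiltration}) is required; everything reduces to the cumulative compatibility built into the preceding definitions.
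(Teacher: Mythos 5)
Your proof is correct and is exactly the argument the paper intends: the paper's own proof of this lemma is the single line ``Clear by construction,'' and what you have written out (disjoint unions of the chosen filtrations on the summands, stabilizing at the top term to align indices, and checking the horizontal \v{C}ech and vertical differentials separately against the compatibility built into Definitions of very good filtrations on covers and on complexes) is precisely the bookkeeping that phrase leaves implicit.
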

\begin{proof}Clear by construction.
\end{proof}

Recall that $\tilde{D}^\eff\to \Ah$ be a representation is
of the diagram of very good pairs.

\begin{defn}Let $F_.X$ be an affine variety with a very good filtration.
We put $\tilde{R}(F_.X)\in C^b(\Ah)$
\[\dots\to T(F_jX_*,F_{j-1}X_*)\to T(F_{j+1}X_*,F_jX_*)\to \dots\]
Let $F_.X_*$ be a very good filtration of a complex $X_*\in C_-(\ZAff)$.
We put $\tilde{R}(F_.X_*)\in C^+(\Ah)$ the total complex of the
double complex $\tilde{R}(F_.X_n)_{n\in\Z}$.
\end{defn}

\begin{proof}[Proof of  Proposition \ref{complexrepr}:]
We first define $R:C^b(\ZVar)\to D^b(\Ah)$ on objects. Let
$X_*\in C_b(\ZVar)$. Choose a rigidified affine cover $\tilde{U}_{X_*}$ of $X_*$
. Choose a very good filtration on the cover. In induces
a very good filtration on
$\tot C_\star(\tilde{U}_{X_*})$.
Put
\[ R(X_*)=\tilde{R}(\tot C_\star(\tilde{U}_{X_*})).   \]
Note that any other choice yields a complex isomorphic to this one in
$D^+(\Ah)$. 

Let $f:X_*\to Y_*$ be a morphism. Choose a refinement $\tilde{U}'_{X_*}$ of $\tilde{U}_{X_*}$
which maps to $\tilde{U}_{Y_*}$ and a very good filtration on $\tilde{U}'_{X_*}$.
Choose a refinement of the filtrations on $\tilde{U}_{X_*}$ 
and $\tilde{U}_{Y_*}$
compatible
with the filtration on $\tilde{U}'_{X_*}$. This gives a little
diagram of morphisms of complexes $\tilde{R}$ which defines
$R(f)$ in $D^+(\Ah)$. 
\end{proof}

\begin{rem}Nori suggests working with Ind-objects (or rather pro-object in our dual setting) in order to get 
functorial complexes attached to affine varieties. However, the
mixing between inductive and projective systems in our construction does
not make it obvious if this works out for the result we needed. In order to avoid this situation, one could however do the
construction in two steps. 
\end{rem}
As a corollary of the construction in the proof, we also get:
\begin{cor}\label{cord.17} Let $X$ be a variety,  $\tilde{U}_{X}$ a rigidified
affine cover with \v{C}ech complex $C_\star(\tilde{U}_X)$. Then
\[ R(X)\to R(C_\star(\tilde{U}_X))\]
is an isomorphism in $D^+(\Ah)$.
\end{cor}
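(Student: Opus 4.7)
The plan is to produce the natural comparison map from the augmentation of the \v{C}ech complex and then compute both sides via the construction of $R$ with cleverly chosen affine covers so that the same double complex appears on both sides.

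First I would construct the map. View $X$ as the complex concentrated in degree $0$. The augmentation $\epsilon\colon C_0(\tilde{U}_X)=\coprod_{i\in I} U_i\to X$ assembles into a morphism of complexes $\epsilon_\star\colon C_\star(\tilde{U}_X)\to X$ in $C_b(\ZVar)$ (the higher components map to $0$). Since $R$ is contravariant, $R(\epsilon_\star)$ is precisely the map of the statement $R(X)\to R(C_\star(\tilde{U}_X))$.

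Next I would compute both sides by making the choices in the construction of $R$ (given in the proof of Proposition \ref{complexrepr}) as compatible as possible. For $R(X)$, use the given rigidified affine cover $\tilde{U}_X$; then $\tot C_\star(\tilde{U}_X)$ is just $C_\star(\tilde{U}_X)$ as a complex of affine varieties, equipped with some very good filtration $F_\bullet$ obtained from the preceding lemmas. For $R(C_\star(\tilde{U}_X))$, observe that every term $\bigcap_{i\in\underline{i}}U_i$ is already affine, so one may select the \emph{trivial} rigidified cover (each term covered by itself with a single index). The associated \v{C}ech complex of a single-element cover of an affine variety $Z$ is just $Z$ in degree $0$, so the total \v{C}ech bicomplex collapses and equals $C_\star(\tilde{U}_X)$ itself. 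Pulling back the same filtration $F_\bullet$ along the identity, both sides are now computed as $\tilde{R}$ applied to the identical filtered complex $C_\star(\tilde{U}_X)$. Hence both $R(X)$ and $R(C_\star(\tilde{U}_X))$ are canonically represented by $\tilde{R}(C_\star(\tilde{U}_X))\in C^+(\Ah)$.

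Finally I would verify that under these matched choices the morphism induced by $\epsilon_\star$ is the identity of $\tilde{R}(C_\star(\tilde{U}_X))$: the trivial cover of $C_\star(\tilde{U}_X)$ refines any pullback of $\tilde{U}_X$ along $\epsilon$, so the little diagram used to define $R(\epsilon_\star)$ in the proof of Proposition \ref{complexrepr} reduces, after the refinements, to identities between the same filtered complexes. Since $R$ is well-defined in $D^+(\Ah)$ independently of the choices, this identification yields the isomorphism in the statement. The only delicate point, and the one I would expect to do most carefully, is the bookkeeping of very good filtrations: one must invoke the functoriality of the system of very good filtrations established earlier in the appendix to ensure that a single choice of $F_\bullet$ on $C_\star(\tilde{U}_X)$ is admissible for both computations and is preserved by the refinement maps that define $R(\epsilon_\star)$.
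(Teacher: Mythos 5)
Your overall strategy---construct the comparison via the augmentation $\epsilon_\star\colon C_\star(\tilde U_X)\to X$ and compute both sides via matched choices in the definition of $R$---is indeed the argument the paper intends (the paper offers no proof, only the remark that the corollary follows from the construction). The direction of the map, the use of $\tilde U_X$ as the cover for computing $R(X)$, and the appeal to the well-definedness of $R$ are all correct.

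However, there is a genuine gap in the middle step. You claim that the \v Cech complex of the trivial one-element cover of an affine $Z$ ``is just $Z$ in degree $0$,'' so that the double \v Cech bicomplex collapses on the nose to $C_\star(\tilde U_X)$. This is false with the paper's conventions: $I_n$ is the set of \emph{all} $(n+1)$-tuples with repeats allowed, so for a one-element index set one gets $C_n = Z$ for every $n\ge 0$, i.e.\ the constant complex
\[
\cdots \xrightarrow{\ 0\ } Z \xrightarrow{\ \id\ } Z \xrightarrow{\ 0\ } Z,
\]
not $Z$ concentrated in degree $0$. Consequently $\tot C_\star$ of the trivial cover of $C_\star(\tilde U_X)$ is a strictly larger double complex than $C_\star(\tilde U_X)$, and the asserted literal equality fails. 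What is true is that the constant \v Cech complex admits a contracting homotopy in $\ZAff$ (the extra degeneracy), which is compatible with very good filtrations and hence is preserved by $\tilde R$; this makes the augmentation of each column a chain homotopy equivalence and collapses the bicomplex up to quasi-isomorphism, not up to equality. Your proof needs this homotopy argument inserted where you currently assert the collapse. There is also a smaller bookkeeping point: the trivial cover of $\bigcap_{i\in\underline{i}}U_i$ does not map to $\tilde U_X$ with compatible rigidification (the single index cannot hit every $i_x$), so you must invoke the refinement step in the definition of $R$ on morphisms rather than asserting that the trivial cover ``refines any pullback'' directly. A clean alternative that sidesteps both issues entirely: by Proposition~\ref{complexrepr} the composition with the faithful exact forgetful functor to $\Rmod$ computes singular cohomology, the functor $D^+(\Ah)\to D^+(\Rmod)$ is conservative, and the \v Cech augmentation is a quasi-isomorphism in singular cohomology; hence $R(\epsilon_\star)$ is an isomorphism.
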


\end{document}